\newcommand{\Z}{\mathbb{Z}}
\newcommand{\Q}{\mathbb{Q}}
\newcommand{\N}{\mathbb{N}}
\newcommand{\cS}{\mathcal{S}}
\DeclareMathOperator{\lbracket}{[\![}
\DeclareMathOperator{\rbracket}{]\!]}
\declaretheoremstyle[notefont=\bfseries,notebraces={}{},%
    headpunct={},postheadspace=1em]{mystyle}
\declaretheorem[style=mystyle,numbered=no,name=Theorem]{thm-hand}
\newtheoremstyle{thm}{}{}{\itshape}{}{\bfseries}{}{ }{} 
\newtheoremstyle{definition}{}{}{}{}{\bfseries}{}{ }{} 
\theoremstyle{thm}
\newtheorem{Theorem}{Theorem}[section]
\newtheorem{theorem}[Theorem]{Theorem}
\newtheorem{lemma}[Theorem]{Lemma}
\newtheorem{proposition}[Theorem]{Proposition}
\newtheorem{corollary}[Theorem]{Corollary}
\newtheorem*{theorem*}{Theorem}
\newtheorem{conjecture}[Theorem]{Conjecture}
\newtheorem*{claim}{Claim:}
\theoremstyle{definition}
\newtheorem{definition}[Theorem]{Definition}
\newtheorem{rem}[Theorem]{Remark}
\newtheorem{ex}[Theorem]{Example}
\definecolor{amaranth}{rgb}{0.9, 0.17, 0.31} 
\definecolor{carrotorange}{rgb}{0.80, 0.5, 0.01} 
\definecolor{citrine}{rgb}{0.89, 0.82, 0.04} 
\definecolor{dartmouthgreen}{rgb}{0.05, 0.5, 0.06} 
\definecolor{ballblue}{rgb}{0.13, 0.67, 0.8} 
\definecolor{ceruleanblue}{rgb}{0.16, 0.32, 0.75} 
\definecolor{amethyst}{rgb}{0.6, 0.4, 0.8} 
\definecolor{amber}{rgb}{1.0, 0.75, 0.0} 
\definecolor{burlywood}{rgb}{0.87, 0.72, 0.53} 
\begin{document}


\title[Khovanov homology of positive links and of L-space knots]{Khovanov homology of positive links and of L-space knots}
\author[M.\ Kegel]{Marc Kegel}
\address{Humboldt Universit\"at zu Berlin, Germany}
\email{kegemarc@hu-berlin.de, kegelmarc87@gmail.com}
\author[N.\ Manikandan]{Naageswaran Manikandan}
\address{Humboldt Universit\"at zu Berlin, Germany}
\email{naageswaran.manikandan@hu-berlin.de}
\author[L.\ Mousseau]{Leo Mousseau}
\address{Humboldt Universit\"at zu Berlin, Germany}
\email{leo.mousseau@t-online.de}
\author[M.\ Silvero]{Marithania Silvero}
\address{Universidad de Sevilla, Spain}
\email{marithania@us.es}


\date{\today} 

\keywords{Khovanov homology, positive knots, fibered knots, L-space knots}
\subjclass[2010]{57M25}

\begin{abstract}
We determine the structure of the Khovanov homology groups in homological grading 1 of positive links. More concretely, we show that the first Khovanov homology is supported in a single quantum grading determined by the Seifert genus of the link, where the group is free abelian and of rank determined by the Seifert graph of any of its positive link diagrams. In particular, for a positive link, the first Khovanov homology is vanishing if and only if the link is fibered. Moreover, we extend these results to $(p,q)$-cables of positive knots whenever $q\geq p$. We also show that several infinite families of Heegaard Floer L-space knots have vanishing first Khovanov homology.  This suggests a possible extension of our results to L-space knots.
\end{abstract}

\makeatletter
\@namedef{subjclassname@2020}{%
  \textup{2020} Mathematics Subject Classification}
\makeatother

\subjclass[2020]{57K10; 57K14, 57K16, 57K18, 57K32} 

\maketitle

\section{Introduction}
In this article, we study the Khovanov homology of positive links and their cables. More precisely, we focus on those Khovanov homology groups in homological grading~$1$. 

\begin{theorem} \label{thm:khovanov_positivity}
If $L$ is a positive link, then its Khovanov homology groups in homological grading $1$ fulfill the following
\begin{align*}
Kh^{1,j}(L)=\begin{cases}
    \Z^{p_1(L)} &\textrm{ if } j=2-\chi(L),\\
    0 &\textrm{ otherwise,}
\end{cases}
\end{align*}
where $\chi(L)$ denotes the Euler characteristic\footnote{If $L$ is a knot then $\chi(L)=1-2g(L)$, where $g(L)$ denotes the Seifert genus of $L$.} of a genus minimizing Seifert surface of $L$ and $p_1(L)$ denotes the first cyclomatic number of the reduced Seifert graph of any positive diagram of $L$. 
\end{theorem}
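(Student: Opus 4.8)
The plan is to fix a reduced positive diagram $D$ of $L$ (no nugatory crossings, so that the Seifert graph $G$ is loopless) and to read off $Kh^1$ directly from the bottom of the Khovanov cube. Since every crossing of $D$ is positive we have $n_-=0$, so the homological grading equals the cube weight $|v|$: the oriented (all-$0$) resolution sits in homological degree $0$, and $Kh^1(L)$ is the homology at the weight-$1$ spot of the truncation $C^0\xrightarrow{d^0}C^1\xrightarrow{d^1}C^2$. I will use that a positive diagram realizes the Seifert genus, so $\chi(L)=s(D)-n_+$ with $s(D)$ the number of Seifert circles; this identifies the target grading as $j=2-\chi=2-s(D)+n_+$.

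The first task is the concentration statement $Kh^{1,j}(L)=0$ for $j\neq 2-\chi$. The inequality $j<2-\chi$ is automatic: in homological degree $1$ the smallest quantum grading carried by any generator is exactly $2-\chi$ (attained by the all-$x$ label after a single merge), so $C^{1,j}=0$ below it. The substantive half is $j>2-\chi$, and this is where genuine positivity must enter rather than the combinatorics of $G$; I expect this to be the main obstacle. My intended route is induction on the crossing number via the unoriented skein long exact sequence, resolving one positive crossing and tracking the grading shifts, with positive unknot/unlink diagrams as base cases, to show that no homology survives above the extremal grading in homological degree $1$.

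Granting the concentration, I would then compute $Kh^{1,2-\chi}$ by identifying the three relevant graded pieces with data of $G$. In grading $2-\chi$, $C^0$ is free on the states labelling a single Seifert circle by $1$ (one per vertex of $G$), $C^1$ is free on the all-$x$ states after switching one crossing (one per edge of $G$), and $C^2$ is free on the all-$x$ states after switching two crossings that join the same pair of Seifert circles (one per pair of parallel edges). A direct inspection of the Frobenius maps shows that $d^0$ sends a vertex generator to the sum of its incident edge generators, while $d^1$ sends an edge generator to the signed sum over the edges parallel to it; crucially, $d^1$ detects only multi-edges. Hence $\ker d^1$ is freely generated by the sums over each parallel class, i.e. by the edges of the reduced graph $\widehat{G}$, and under this identification $\operatorname{im}d^0$ is precisely the image of the incidence map of $\widehat{G}$.

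Therefore $\ker d^1/\operatorname{im}d^0$ is the first (co)homology of $\widehat{G}$, which is free abelian of rank equal to the first Betti number $p_1(\widehat{G})$, giving $Kh^{1,2-\chi}(L)\cong\Z^{p_1(L)}$ and completing the proof. The one point requiring care here is freeness: a priori the incidence map of $\widehat{G}$ appears unsigned, whose cokernel could carry $2$-torsion. The resolution is that a Seifert graph is bipartite (its Seifert circles are $2$-colourable), which lets one negate the generators over one colour class to turn the unsigned incidence map into the genuine cellular boundary of $\widehat{G}$; its cokernel is then the torsion-free group $H^1(\widehat{G};\Z)\cong\Z^{p_1(\widehat{G})}$, exactly as claimed.
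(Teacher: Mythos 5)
Your computation at the extremal quantum grading is correct and close in spirit to the paper's own analysis: the identification of $C^{0,2-\chi}$, $C^{1,2-\chi}$, $C^{2,2-\chi}$ with vertices, edges, and parallel pairs of edges of the Seifert graph, the observation that $d^1$ only sees parallel edges (this is exactly the kernel analysis in the proof of Theorem~\ref{thm:technical}), and the bipartiteness trick to turn the unsigned incidence map into the cellular coboundary of $G^{red}$, giving the free group $H^1(G^{red};\Z)\cong\Z^{p_1(L)}$. This last step is a nice self-contained replacement for the paper's citation of Remark~6.4 of the almost-extreme Khovanov homology paper. The lower bound $C^{1,j}=0$ for $j<2-\chi$ is also fine.

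The genuine gap is the vanishing $Kh^{1,j}(L)=0$ for $j>2-\chi(L)$, which you yourself flag as ``the main obstacle'' and for which you only offer an intended route. As written, that route does not obviously close: if you resolve a positive crossing of a positive diagram $D$, the $0$-resolution is again positive, but the $1$-resolution $D_1$ is in general neither positive nor alternating nor adequate, so the inductive hypothesis gives you no control over the third term of the skein long exact sequence in the quantum gradings above $2-\chi$; this is precisely where the difficulty lives, and ``tracking the grading shifts'' will not by itself make those terms vanish. The paper avoids this head-on attack entirely: Theorem~\ref{thm:technical} shows that the \emph{total} group $Kh^{1}(L)$ (all quantum gradings at once) is unchanged when $D$ is replaced by a positive diagram $D'$ realizing the reduced Seifert graph; by Lemma~\ref{lem:existence} one may take $D'$ alternating, so Lee's thinness theorem confines $Kh^{1,*}(L')$ to two quantum gradings, one of which is killed by a separate induction (Lemma~\ref{lem:last_group}) that crucially exploits that there the $1$-resolution stays alternating and adequate with non-bipartite state graph. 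This pins the total rank of $Kh^{1}(L)$ at $p_1(L)$, and since your (or the paper's) extremal computation already produces $\Z^{p_1(L)}$ in grading $2-\chi(L)$, nothing can survive in any other quantum grading. To complete your proof you would either need to supply an argument of comparable strength for the upper gradings or adopt this rank-comparison strategy; note that the latter requires proving the ungraded isomorphism $Kh^1(L)\cong Kh^1(L')$, i.e.\ that identifying the parallel-edge coordinates in $\lbracket D\rbracket^1$ preserves $H^1$ in \emph{every} quantum grading, not just the extremal one.
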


For the precise definition of $p_1(L)$ we refer to Section~\ref{sec:prelim}. In particular, it follows that $p_1(L)$ is an invariant of positive links. We deduce from Theorem~\ref{thm:khovanov_positivity} that the first Khovanov homology detects fiberedness among positive links.

\begin{theorem}\label{thm:pos_fib}
A positive link $L$ is fibered if and only if its first Khovanov homology vanishes, i.e.~$Kh^{1,*}(L)=0$.
\end{theorem}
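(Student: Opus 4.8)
The plan is to reduce Theorem~\ref{thm:pos_fib} to the quantitative statement of Theorem~\ref{thm:khovanov_positivity} together with a classical characterization of fiberedness of positive links in terms of their Seifert graphs. By Theorem~\ref{thm:khovanov_positivity}, the entire first Khovanov homology $Kh^{1,*}(L)$ is concentrated in the single quantum grading $j=2-\chi(L)$, where it is free abelian of rank $p_1(L)$, the first cyclomatic number of the reduced Seifert graph of any positive diagram of $L$. Since a free abelian group vanishes precisely when its rank is zero, we immediately obtain the equivalence
\[
Kh^{1,*}(L)=0 \quad\Longleftrightarrow\quad p_1(L)=0.
\]
So the theorem follows once I establish that $L$ is fibered if and only if $p_1(L)=0$.

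First I would recall that the cyclomatic number $p_1(\Gamma)$ of a graph $\Gamma$ equals zero exactly when $\Gamma$ is a forest (a disjoint union of trees), since $p_1$ counts independent cycles, i.e.~the rank of the first homology $H_1(\Gamma)$. Thus $p_1(L)=0$ is equivalent to the reduced Seifert graph of a positive diagram of $L$ being a tree (it is connected for a connected diagram, or a forest in general). The key remaining input is the geometric fact, due to Seifert's algorithm applied to a positive diagram, that the Seifert surface produced from a positive diagram is a fiber surface precisely when the associated reduced Seifert graph is a tree. For positive (more generally, homogeneous or special alternating) diagrams this is classical: the Seifert algorithm on a positive diagram yields a minimal genus Seifert surface, and such a surface is fibered if and only if it can be built by successive plumbings of positive Hopf bands, which corresponds combinatorially to the Seifert graph being a tree.

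Concretely, I would argue as follows. Seifert's algorithm on a positive diagram $D$ of $L$ produces a Seifert surface $\Sigma$ whose genus is minimal (positivity guarantees this), so $\chi(\Sigma)=\chi(L)$. The surface $\Sigma$ deformation retracts onto the Seifert graph of $D$, and after reducing parallel edges one sees that $\Sigma$ is obtained by plumbing positive Hopf bands indexed along the edges of the reduced Seifert graph; the loops of this graph correspond exactly to the independent cycles counted by $p_1(L)$. When the reduced Seifert graph is a tree, $\Sigma$ is an iterated plumbing of positive Hopf bands along a tree pattern, hence a fiber surface by Stallings' theorem on plumbings, so $L$ is fibered. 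Conversely, each independent cycle in the Seifert graph obstructs the surface from being a Murasugi sum of Hopf bands in the fibered pattern; more directly, a fibered link has monic Alexander polynomial with leading coefficient $\pm 1$ and the top coefficient of the Alexander polynomial of a positive link computed from the Seifert form degenerates exactly when $p_1(L)>0$, contradicting fiberedness. Hence $L$ fibered forces $p_1(L)=0$.

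The main obstacle I anticipate is the converse direction, namely showing that $p_1(L)>0$ genuinely obstructs fiberedness rather than merely obstructing one particular construction of a fiber surface. The cleanest route is to invoke that the reduced Seifert graph being a tree is a diagrammatic criterion that is equivalent to fiberedness for the whole class of positive links; this equivalence is known in the literature (for instance via the results relating positive braids, homogeneous diagrams, and Stallings' plumbing theorem), so I would cite it and combine it with the rank computation $Kh^{1,*}(L)=\Z^{p_1(L)}$ from Theorem~\ref{thm:khovanov_positivity} to conclude.
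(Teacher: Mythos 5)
Your proposal is correct and follows essentially the same route as the paper: the paper likewise combines the rank computation $Kh^{1,*}(L)=\Z^{p_1(L)}$ (Theorem~\ref{thm:khovanov_positivity}, via Theorem~\ref{thm:technical}) with the diagrammatic fiberedness criterion for positive links (Theorem~\ref{thm:burdened_fibered+positive}, due to Stoimenow), which is exactly the statement that a positive diagram represents a fibered link if and only if its reduced Seifert graph is a tree, i.e.~$p_1(L)=0$. Your hand-sketched arguments for that criterion (Hopf plumbings, Alexander polynomial leading coefficient) are not needed once you cite it, which is precisely what the paper does.
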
 

In general, Khovanov homology does not detect fiberedness. For example, $K9a12$ is fibered and $K11n83$ is not fibered, but both knots share the same Khovanov homology (after mirroring one of them)~\cite{KnotInfo}.
Conceptual similar results to Theorem~\ref{thm:pos_fib} were also obtained in~\cite{Stoi_MR2159224,Buch_arxiv.2204.03846,buchanan2023pair} for the Jones polynomial. However, our results are about the homological gradings in Khovanov homology while their results about the Jones polynomials can be interpreted as results on the quantum grading in Khovanov homology and thus are in some sense orthogonal to the results presented here. On the other hand, it was shown by Sto\v{s}i\'c~\cite{Stosic} that braid positive links have trivial first Khovanov homology. Theorem~\ref{thm:pos_fib} generalizes Sto\v{s}i\'c's result since braid positive links are necessarily fibered and positive. Nevertheless, there exist fibered and positive links that are not braid positive. Among the prime knots with at most $12$ crossings, there are exactly $16$ such knots, the simplest one being the knot $K10n7$~\cite{KnotInfo}. We present in Proposition~\ref{prop:infinite_family} an infinite family of fibered, positive knots that are not braid positive.  

\begin{corollary}\label{cor:concordance}
    If $K_0$ is ribbon concordant to a positive knot $K_1$, then $Kh^{1,j}(K_0)$ is trivial if $j\neq2g(K_1)+1$ and free abelian of rank at most $p_1(K_1)$ if $j=2g(K_1)+1$. If $K_1$ is, in addition, fibered then the first Khovanov homology of $K_0$ is vanishing.
\end{corollary}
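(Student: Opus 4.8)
The corollary concerns ribbon concordance, so the natural tool is the behavior of Khovanov homology under ribbon concordance. I would build the proof on the functoriality of Khovanov homology under concordances together with the recent injectivity results for ribbon concordance. The key fact I would invoke is the theorem of Levine–Zemke (and its refinements) stating that a ribbon concordance $C$ from $K_0$ to $K_1$ induces, on Khovanov homology, a map $\phi_C \colon Kh^{i,j}(K_0) \to Kh^{i,j}(K_1)$ that is \emph{injective} in each bigrading, with a left inverse coming from the reverse concordance. With injectivity in hand, $Kh^{1,j}(K_0)$ embeds as a subgroup of $Kh^{1,j}(K_1)$ in every quantum grading $j$.

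**Carrying it out.**

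First I would record that ribbon concordance preserves the relevant bigradings: the cobordism maps respect the homological grading $i$ and, because a concordance has Euler characteristic zero, the quantum grading $j$ as well (up to the standard normalization). Second, I would apply Theorem~\ref{thm:khovanov_positivity} to the positive knot $K_1$: since $K_1$ is a knot, $\chi(K_1)=1-2g(K_1)$, so $2-\chi(K_1)=2g(K_1)+1$, and hence
\begin{align*}
Kh^{1,j}(K_1)=\begin{cases}
\Z^{p_1(K_1)} & \text{if } j=2g(K_1)+1,\\
0 & \text{otherwise.}
\end{cases}
\end{align*}
Third, by the injectivity of $\phi_C$ in each bigrading, $Kh^{1,j}(K_0)$ injects into $Kh^{1,j}(K_1)$. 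For $j\neq 2g(K_1)+1$ the target is $0$, forcing $Kh^{1,j}(K_0)=0$; for $j=2g(K_1)+1$ the group $Kh^{1,j}(K_0)$ embeds in $\Z^{p_1(K_1)}$, so it is free abelian of rank at most $p_1(K_1)$. Finally, if $K_1$ is also fibered, Theorem~\ref{thm:pos_fib} gives $Kh^{1,*}(K_1)=0$, whence $Kh^{1,*}(K_0)=0$ by the same embedding.

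**The main obstacle.**

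The substantive point is not the arithmetic but pinning down exactly which injectivity statement one is entitled to use and in what coefficients. The Levine–Zemke-type result I would cite guarantees injectivity of the ribbon concordance map over $\Z$ (or at least a split injection after suitable coefficients), and I would need to make sure it applies in the single homological grading $i=1$ and respects the quantum grading precisely as stated, so that the subgroup conclusion is honest. A subgroup of a finitely generated free abelian group is again free abelian of rank no larger, which legitimizes the phrase ``free abelian of rank at most $p_1(K_1)$''; I would make this explicit to avoid any worry about torsion sneaking in. The only care needed is that the injectivity must hold integrally (not merely rationally), since otherwise one could not rule out torsion in $Kh^{1,2g(K_1)+1}(K_0)$ — and confirming that the cited functoriality result delivers this is where I expect the real work of the argument to lie.
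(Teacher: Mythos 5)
Your proposal is correct and matches the paper's argument exactly: the paper also deduces the corollary from the Levine--Zemke theorem that a ribbon concordance induces a bigrading-preserving injection on Khovanov homology, combined with Theorem~\ref{thm:khovanov_positivity} (and Theorem~\ref{thm:pos_fib} in the fibered case). Your additional remarks on integral injectivity and on subgroups of free abelian groups are correct and simply make explicit what the paper leaves implicit.
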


\begin{proof}
    This follows directly from a result of Levine--Zemke~\cite{LevineZemke} saying that a ribbon concordance induces a grading preserving injective map on Khovanov homology. 
\end{proof}

\subsection{Previously known positivity obstructions from Khovanov homology}\label{sec:other_obstructions}
Other obstructions of the Khovanov homology groups of positive links were known before. In \cite{Patterns_in_Khovanov_homology,semi_adequate_torsion} it was shown that for a positive link $L$ it holds
\begin{align*}
    Kh^{i,j}(L)&=0 \,\textrm{ if } \,i<0 \,\textrm{ or }\, j<-\chi(L),\\
    Kh^{0,j}(L)&=\begin{cases}
        \Z &\textrm{ if } \,j=-\chi(L) \,\textrm{ and }\,2-\chi(L),\\
    0  &\textrm{ otherwise, } \\
    \end{cases}\\
    Kh^{i,-\chi(L)}(L)&=0 \, \textrm{ if }\,  i\neq0,\\
    Kh^{i,2-\chi(L)}(L)&=0 \, \textrm{ if } \, i\neq 0\, \textrm{ or }\,1.\\
\end{align*}
 We refer to Figure~\ref{fig:positivity_table} for a schematic picture of the Khovanov homology of a positive link. Some of these results can also be recovered from the main results of~\cite{almost_extremeKH, Extremal_KH_girth}. In particular, we observe that Khovanov homology detects the genus among positive links. On the other hand, for every $k>0$ there exists a fibered positive knot $K$ such that $Kh^{i}(K)$ is non-trivial for every $i=0,2,3,4,\ldots,k$.  

\begin{figure}[htbp]
\centering
\includegraphics[width = 7.3cm]{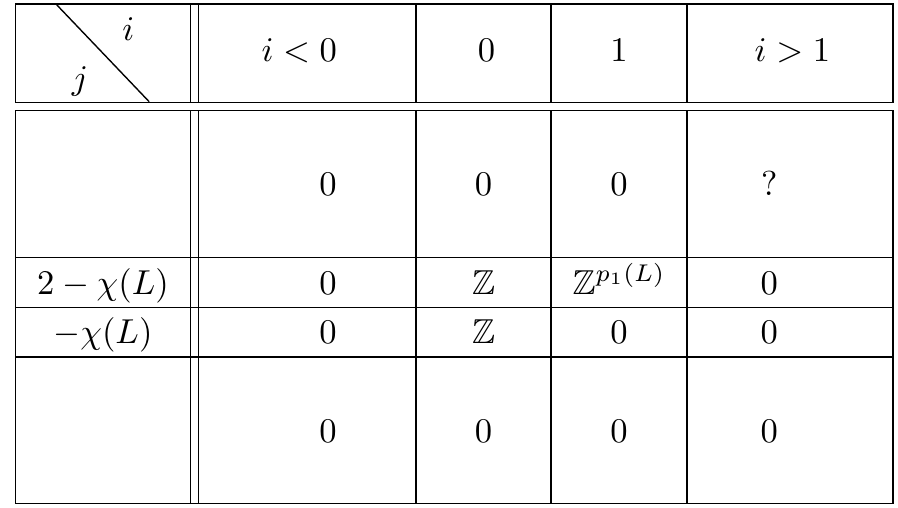}
\caption{The Khovanov homology of a positive link $L$.}
\label{fig:positivity_table}
\end{figure}

\subsection{Khovanov homology of cables}
The skein long exact sequence for Khovanov homology together with Theorems~\ref{thm:khovanov_positivity} and~\ref{thm:pos_fib} allow us to prove that the Khovanov homology of certain positive cables of positive knots looks like the Khovanov homology of a positive knot. Here we emphasize that such a cable (even if its cabling slopes are positive) is in general not a positive link. We refer to Remark~\ref{rem:not_pos} for examples of this phenomenon.

\begin{theorem}\label{thm:KHcable}
    If $K$ is a positive knot, then for every $q\geq p\geq2$ the Khovanov homology of its $(p,q)$-cable $K_{p,q}$ fulfills
    \begin{align*}
    Kh^{i,j}(K_{p,q})&=0 \,\textrm{ if } \,i<0,\\
    Kh^{0,j}(K_{p,q})&=\begin{cases}
        \Z &\textrm{ if } \,j=1-\chi(K_{p,q})\pm1,\\
    0  &\textrm{ otherwise, } \\
    \end{cases}\\
    Kh^{1,j}(K_{p,q})&=\begin{cases}
        \Z^{p_1(K)} &\textrm{ if } \,j=2-\chi(K_{p,q}) ,\\
    0  &\textrm{ otherwise. } \\
    \end{cases}\\
\end{align*}
\end{theorem}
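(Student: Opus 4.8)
The plan is to reduce everything to the already-understood case of genuinely positive links by repeatedly applying the unoriented skein long exact sequence to crossings in the cabling region, descending in the twist parameter $q$. First I would fix a positive diagram $D$ of $K$ with $c$ crossings and form the standard cable diagram of $K_{p,q}$ by taking $p$ parallel (blackboard-framed) copies of $D$ together with a torus-braid twist region. All crossings arising from the parallel copies of $D$ are positive, and for $q$ sufficiently large (concretely $q\geq pc$) the twist region can be taken to consist only of positive crossings, so that $K_{p,q}$ is then an honestly positive link. For this positive regime I would apply Theorem~\ref{thm:khovanov_positivity} together with the positivity obstructions recalled in Section~\ref{sec:other_obstructions} to read off the whole low-degree picture: vanishing for $i<0$, the rank-two group in homological degree $0$ supported in the quantum gradings $1-\chi(K_{p,q})\pm1=\{-\chi(K_{p,q}),\,2-\chi(K_{p,q})\}$, and $Kh^{1,*}(K_{p,q})=\Z^{p_1(K_{p,q})}$ concentrated in grading $2-\chi(K_{p,q})$.

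Two computations feed into this base case. I would first verify the Euler-characteristic formula for the cable, so that the quantum gradings $1-\chi(K_{p,q})\pm1$ and $2-\chi(K_{p,q})$ come out exactly as stated; this is a Seifert-algorithm count on the cable diagram. Second, and more importantly, I would prove the combinatorial lemma that the first cyclomatic number of the reduced Seifert graph of the cable diagram equals $p_1(K)$, independently of $q$. The point is that passing to $p$ parallel copies replaces each Seifert circle by $p$ nested circles and each crossing band by a block of parallel bands, and after collapsing multi-edges in the reduction the cyclomatic number is unchanged, while the twist region only contributes collapsible parallel edges. This is what allows the final answer to be stated uniformly as $\Z^{p_1(K)}$.

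Next I would carry out the descent. Applying the skein long exact sequence at one crossing of the twist region, the oriented resolution is the cable $K_{p,q-1}$ with one fewer twist, while the unoriented resolution $L$ is a link with one fewer strand in the cabling region, which I would identify with a lower cable together with a summand governed by $K$. By an inner induction on $p$, and again using Theorems~\ref{thm:khovanov_positivity} and~\ref{thm:pos_fib}, I would show that the Khovanov homology of this error term $L$, after the grading shifts dictated by the long exact sequence, is supported outside the window consisting of homological degrees $0$ and $1$ in the relevant quantum gradings. Consequently the connecting maps force isomorphisms $Kh^{0,*}(K_{p,q})\cong Kh^{0,*}(K_{p,q-1})$ and $Kh^{1,*}(K_{p,q})\cong Kh^{1,*}(K_{p,q-1})$ up to the shift by $\chi$, so that the low-degree homology is constant along the family and equals its value at the positive base case. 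Descending from $q\geq pc$ down to $q=p$ then yields the theorem for all $q\geq p$.

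The main obstacle is precisely this control of the skein maps: I expect the hard part to be showing that the error term $L$ does not leak into homological gradings $0$ and $1$ in the quantum gradings where $K_{p,q}$ is supported, and that it is exactly the hypothesis $q\geq p$ that guarantees this, since for $q<p$ the twist region is too short and the error term contributes in low degree, so the clean statement fails. Making the identification of $L$ precise and bounding its Khovanov homology in the relevant window, while simultaneously bookkeeping the quantum and homological grading shifts across the long exact sequence and coordinating the induction on $q$ with the induction on $p$, is where the real work lies.
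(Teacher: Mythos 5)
Your overall strategy coincides with the paper's: establish the result for $q\geq pw$ (where $K_{p,q}$ is honestly positive, so Theorem~\ref{thm:khovanov_positivity} and the obstructions of Section~\ref{sec:other_obstructions} apply, after checking $p_1(K_{p,q})=p_1(K)$ and the Euler characteristic formula), and then descend to smaller $q$ via the skein long exact sequence at crossings of the twist region, showing the error term does not contribute in homological degrees $\leq 1$. Both of the computations you flag for the base case are indeed carried out in the paper, and your guess that $q\geq p$ enters through the estimate controlling the error term is right (it appears in the bound $k\leq w-1$ inside Lemma~\ref{lem:writhe_comp}).

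However, there is a concrete gap in the descent step as you describe it. For $p\geq 3$ a single $(1/p)$-twist in the cabling region consists of $p-1$ crossings, so resolving \emph{one} crossing orientedly does not produce $K_{p,q\pm1}$; it produces a cable with a \emph{partial} twist left over. (Also note the direction: the oriented resolution of a negative twist crossing removes a negative crossing and hence moves you toward larger $q$, i.e.\ toward the positive regime, not to $K_{p,q-1}$.) Your induction on $q$ therefore does not close for $p\geq 3$ without enlarging the family of links under consideration. The paper resolves this by introducing the twisted cables $K_{p,q;m}$ (Theorem~\ref{thm:twisted_cable}) and inducting lexicographically on $(p,n,m)$ with $n=pw-q$. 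The second place where your plan is thinner than the actual argument is the error term: the unoriented resolution is not simply ``a lower cable together with a summand governed by $K$''; identifying it as either a diagram of a positive link or a diagram of a strictly smaller twisted cable (possibly with split unknot components) is the content of Lemma~\ref{lem:families} and requires a nontrivial isotopy analysis. Moreover the vanishing mechanism is not a quantum-grading window but a purely homological one: the writhe estimate $w_0-\overline{w}+3<0$ of Lemma~\ref{lem:writhe_comp} shifts the error term's contribution into negative homological degree, where it vanishes by positivity obstructions or by the inductive hypothesis (together with Lemma~\ref{lem:kunneth} for the split unknots). You correctly identified that this control is where the real work lies, but the plan as stated does not yet contain the ingredients that make it go through.
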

In the special case when $K$ is fibered, the above theorem implies the following.

\begin{corollary}\label{cor:fibered_cable}
If $K$ is positive and $q\geq p\geq2$, then $Kh^{1,*}(K_{p,q})$ is trivial if and only if $K$ is fibered.    
\end{corollary}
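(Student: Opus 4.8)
The plan is to read the answer directly off Theorem~\ref{thm:KHcable} and then translate the resulting numerical condition into fiberedness by invoking the earlier results for the underlying knot. First I would observe that Theorem~\ref{thm:KHcable} computes the first Khovanov homology of the cable completely: for a positive knot $K$ and $q\geq p\geq2$ the group $Kh^{1,j}(K_{p,q})$ equals $\Z^{p_1(K)}$ in the single quantum grading $j=2-\chi(K_{p,q})$ and vanishes in all other gradings. Hence $Kh^{1,*}(K_{p,q})$ is trivial if and only if its total rank is zero, that is, if and only if $p_1(K)=0$.

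Next I would connect the vanishing of $p_1(K)$ back to the fiberedness of $K$. By Theorem~\ref{thm:khovanov_positivity} applied to the positive knot $K$ itself, the group $Kh^{1,*}(K)$ is free abelian of rank $p_1(K)$, so $Kh^{1,*}(K)=0$ holds exactly when $p_1(K)=0$. Theorem~\ref{thm:pos_fib} then identifies this vanishing with fiberedness, since a positive knot has trivial first Khovanov homology if and only if it is fibered. Chaining these equivalences yields that $Kh^{1,*}(K_{p,q})=0$ iff $p_1(K)=0$ iff $Kh^{1,*}(K)=0$, which by Theorem~\ref{thm:pos_fib} holds iff $K$ is fibered.

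There is essentially no obstacle in the argument itself: all of the substantive content, namely the skein long exact sequence computation of the cable's Khovanov homology, has already been carried out in the proof of Theorem~\ref{thm:KHcable}. The only point meriting a moment's care is that the rank $p_1(K)$ appearing in the cable computation is literally the same invariant of the underlying knot $K$ that governs $Kh^{1,*}(K)$; once this is noted by comparing the two theorem statements, the two vanishing conditions coincide and the corollary follows immediately.
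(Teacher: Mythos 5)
Your proof is correct and follows essentially the same route as the paper: both read off $Kh^{1,*}(K_{p,q})\cong\Z^{p_1(K)}$ from Theorem~\ref{thm:KHcable} and then translate $p_1(K)=0$ into fiberedness of $K$. The only cosmetic difference is that the paper invokes Theorem~\ref{thm:burdened_fibered+positive} (Stoimenow's tree criterion) directly, whereas you route through $Kh^{1,*}(K)$ via Theorems~\ref{thm:khovanov_positivity} and~\ref{thm:pos_fib}, which amounts to the same thing.
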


\subsection{Khovanov homology of L-space knots}
Another interesting class of knots is given by L-space knots. Here an \textit{L-space} knot is a knot that admits a positive surgery to a Heegaard Floer L-space~\cite{OzsvathSzabo}. L-space knots are necessarily fibered~\cite{Ni} and strongly quasipositive~\cite{Hedden}. Moreover, many L-space knots are braid positive~\cite{Anderson_et_al, BakerKegel} and thus are fibered and positive. By Theorem~\ref{thm:pos_fib} these fibered positive L-space knots have Khovanov homology $Kh^{0,2g\pm1}=\Z$ and all other Khovanov homology groups of homological degree at most $1$ are trivial. 

However, there exist L-space knots that are not braid positive~\cite{BakerKegel} and not positive (see Remark~\ref{rem:not_pos}). 
Currently, the only L-space knots that are not known to be braid positive are certain cables of L-space knots~\cite{Anderson_et_al} and an infinite family of hyperbolic L-space knots from~\cite{BakerKegel}. In Section~\ref{sec:comp} we prove that large infinite subclasses of these L-space knots also have Khovanov homology $Kh^{0,2g\pm1}=\Z$ and all other Khovanov homology groups of homological degree at most $1$ are trivial. In particular, we show that all currently known hyperbolic L-space knots have such Khovanov homology groups. As a consequence of Theorem~\ref{thm:KHcable} we get the following result.

\begin{corollary}\label{cor:KHL-spaceCable}
If $K$ is an L-space knot which is a cable of a positive knot, then 
    \begin{align*}
    Kh^{0,2g(K)\pm1}(K)=\Z\, \textrm{ and }\,
    Kh^{i,j}(K)= 0\,\textrm{ for all other groups with }\, i\leq1. 
\end{align*}
\end{corollary}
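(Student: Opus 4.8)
The plan is to realize $K$ as the $(p,q)$-cable $J_{p,q}$ of a positive knot $J$ and then to apply Theorem~\ref{thm:KHcable} directly. Before invoking that theorem I must check that its hypotheses are met, namely that the cabling parameters satisfy $q\geq p\geq 2$. Writing $K=J_{p,q}$ with $p\geq 2$ (so that the cable is genuine) and $J$ positive, I would first dispose of the degenerate case in which $J$ is the unknot: then $K$ is a torus knot $T(p,q)$, which is braid positive and hence both fibered and positive, so the claimed computation already follows from Theorem~\ref{thm:pos_fib} together with the positivity obstructions recalled in Section~\ref{sec:other_obstructions}. Thus I may assume $J$ is nontrivial, so that $g(J)\geq 1$.

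To pin down the cabling slope I would invoke the characterization of L-space cables due to Hedden and Hom: the cable $J_{p,q}$ is an L-space knot if and only if $J$ is itself an L-space knot and $q\geq p\bigl(2g(J)-1\bigr)$. Since $K=J_{p,q}$ is assumed to be an L-space knot and $g(J)\geq 1$, this yields $q\geq p\bigl(2g(J)-1\bigr)\geq p$, and together with $p\geq 2$ we obtain $q\geq p\geq 2$ as required. In particular, this characterization also tells us that the companion $J$ is a positive L-space knot, a fact I will reuse below.

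With the hypotheses verified, Theorem~\ref{thm:KHcable} applies to $K=J_{p,q}$ and gives $Kh^{i,j}(K)=0$ for $i<0$, together with $Kh^{0,j}(K)=\Z$ exactly for $j=1-\chi(K)\pm 1$ and $Kh^{1,j}(K)=\Z^{p_1(J)}$ exactly for $j=2-\chi(K)$. Since $K$ is a knot, $\chi(K)=1-2g(K)$, so $1-\chi(K)=2g(K)$; substituting, the degree-zero part becomes precisely $Kh^{0,2g(K)\pm 1}(K)=\Z$ with all other degree-zero groups vanishing, which is exactly the claimed shape of $Kh^0$.

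It remains to show that the first Khovanov homology vanishes. Here I would use that the companion $J$ is an L-space knot, hence fibered by Ni's theorem as recalled in the text; Corollary~\ref{cor:fibered_cable} then gives $Kh^{1,*}(K)=0$ (equivalently $p_1(J)=0$ by Theorem~\ref{thm:pos_fib}), which completes the argument. I expect the only genuine obstacle to be the bookkeeping around the cabling conventions and signs needed to extract $q\geq p\geq 2$ from the L-space hypothesis; once the correct slope inequality is secured, the remainder is a direct substitution into Theorem~\ref{thm:KHcable} and an appeal to fiberedness of the companion.
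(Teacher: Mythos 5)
Your proposal is correct and follows essentially the same route as the paper: extract $q\geq p\geq 2$ from the Hedden--Hom characterization of cable L-space knots, apply Theorem~\ref{thm:KHcable}, and kill $Kh^{1}$ using that the companion is itself an L-space knot and hence fibered by Ni. You are somewhat more careful than the printed proof in separating out the unknot-companion (torus knot) case and in making explicit why the companion is fibered, but the substance is identical.
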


\begin{proof}    By~\cite{Hom_cabling_and_Lspace_surgeries} it is known that if $K$ is a cable $L$-space knot then its cable parameters fulfill $q\geq p\geq2$. Since $L$-space knots are fibered~\cite{Ni}, Theorem~\ref{thm:KHcable} implies the result.
\end{proof}

 In light of these results, we conjecture that $L$-space knots have Khovanov homology that looks like the Khovanov homology of a positive knot.

\begin{conjecture}\label{conj:Lspace}
   If $K$ is an L-space knot, then 
    \begin{align*}
    Kh^{0,2g(K)\pm1}(K)=\Z\, \textrm{ and }\,
    Kh^{i,j}(K)= 0\,\textrm{ for all other groups with }\, i\leq1. 
\end{align*}
\end{conjecture}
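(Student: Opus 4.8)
The plan is to reduce the conjecture to a single structural statement about a class strictly larger than positive knots. Every L-space knot is fibered by~\cite{Ni} and strongly quasipositive by~\cite{Hedden}, so the natural route is to prove the following strengthening of Theorems~\ref{thm:khovanov_positivity} and~\ref{thm:pos_fib}: if $K$ is strongly quasipositive, then $Kh^{1,j}(K)$ is free abelian, supported in the single quantum grading $j=2-\chi(K)$, with rank given by an invariant that vanishes exactly when $K$ is fibered; and $Kh^{0,j}(K)=\Z$ precisely for $j=-\chi(K)$ and $j=2-\chi(K)$ (one would have to establish the degree-$0$ statement in this generality as well, since the computations recalled in Section~\ref{sec:other_obstructions} are currently phrased for positive links). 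Granting this, the conjecture is immediate: fiberedness of L-space knots forces the rank to be $0$, hence $Kh^{1,*}(K)=0$, while the degree-$0$ computation pins down $Kh^{0,2g(K)\pm1}(K)=\Z$ and kills everything else in homological degree $\leq 1$, using $-\chi(K)=2g(K)-1$ and $2-\chi(K)=2g(K)+1$.

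First I would clear away the cases already within reach. For an L-space knot $K$ that is positive, Theorem~\ref{thm:pos_fib} together with fiberedness gives $Kh^{1,*}(K)=0$, and Theorem~\ref{thm:khovanov_positivity} with $p_1(K)=0$ combined with the degree-$0$ computation of Section~\ref{sec:other_obstructions} yields the full conclusion; this settles every braid positive L-space knot~\cite{Anderson_et_al, BakerKegel}. For an L-space knot that is a cable, Corollary~\ref{cor:KHL-spaceCable} already gives the statement. What remains are precisely the L-space knots that are neither positive nor cables, most notably the hyperbolic family of~\cite{BakerKegel}, which is exactly the regime where Section~\ref{sec:comp} must instead proceed by direct computation.

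The hard part will be the general strongly quasipositive case. The proof of Theorem~\ref{thm:khovanov_positivity} reads the homological degree $\leq 1$ part of the Khovanov complex off the reduced Seifert graph of a \emph{positive} diagram, where every crossing is positive and the oriented resolution sits at a corner of the cube of resolutions; this is what makes the bottom two gradings transparent and identifies $Kh^{1,*}$ with a cyclomatic number. A strongly quasipositive knot comes instead with a quasipositive banded (Bennequin) surface, presented by a braid word in band generators $\sigma_{i,j}$, and the resulting diagram need not be positive, so the clean correspondence between the reduced Seifert graph and the lowest gradings breaks down and one loses the direct handle on $Kh^{1,*}$. I would attempt to recover control by working with the Bennequin surface itself, tracking how the band generators contribute to the degree $\leq 1$ part and hoping that fiberedness of the surface again reduces to a spanning-tree (first-Betti-number-zero) condition on an associated graph; alternatively one might try to import the very restrictive knot Floer homology of L-space knots through a spectral-sequence comparison, though no presently available comparison delivers the needed bound in degree $1$. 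It is exactly this step—pushing the reduced-Seifert-graph computation past positivity—that I expect to be the crux, and its difficulty is why the statement is offered as a conjecture rather than a theorem.
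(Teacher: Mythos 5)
The statement you are trying to prove is stated in the paper as a conjecture, not a theorem; the paper only establishes it for particular families (braid positive L-space knots via Theorem~\ref{thm:pos_fib}, cables of positive knots via Corollary~\ref{cor:KHL-spaceCable}, $2$-cables of $T(2,3)_{2,3}$ over $\Z_2$ via Theorem~\ref{thm:iteraded_cable}, and the Baker--Kegel hyperbolic family via Theorem~\ref{thm:Baker_Kegel_family}). Your case analysis of what is already within reach matches the paper's. The problem is your proposed route to the general case.

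The strengthening you propose as the ``single structural statement'' --- that every fibered strongly quasipositive knot has vanishing first Khovanov homology --- is \emph{false}, and the paper itself supplies the counterexamples. Theorem~\ref{thm:verifying_conjecture} shows that $T(2,2g+1)_{2,1}$ is fibered and strongly quasipositive with $Kh^{1,3}=\Z$ for $1\leq g\leq 200$ (the $(2,1)$-cable of the trefoil being the simplest case, communicated by Lewark), and Example~\ref{ex:hyp} gives a hyperbolic fibered strongly quasipositive knot with $Kh^{1,*}\neq 0$. Indeed, the stated purpose of Section~\ref{sec:comp} is precisely to show that Theorems~\ref{thm:khovanov_positivity} and~\ref{thm:pos_fib} do \emph{not} extend to fibered strongly quasipositive knots. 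So any proof of the conjecture must use the L-space condition in a way that goes beyond fiberedness plus strong quasipositivity; your Bennequin-surface/spanning-tree heuristic cannot succeed as stated because the class it targets contains knots violating the conclusion. The avenue the paper actually suggests for the general case is different: by Levine--Zemke, a ribbon concordance induces an injection on Khovanov homology, so it would suffice to show that every L-space knot is ribbon concordant to a positive fibered (or braid positive) knot. Your alternative suggestion of a knot Floer comparison is closer in spirit to using the L-space hypothesis, but, as you note, no such comparison currently controls homological degree $1$.
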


We remark that the result of Levine--Zemke~\cite{LevineZemke} gives a potential way to study this conjecture. If we can show that any $L$-space knot $K$ is ribbon concordant to a positive, fibered (or a braid positive) knot, then $Kh^{0,2g(K)\pm1}(K)$ is $0$ or $\Z$ and all other Khovanov homology groups in homological grading at most $1$ vanish. On the other hand, we can search for a counterexample by finding a ribbon concordance from a knot $K_0$ with Khovanov homology that is not of the form of a positive knot to an L-space knot $K_1$. 

When not explicitly stated we always use Khovanov homology over the integers. However, all our results for Khovanov homology hold also true (by exactly the same proofs) with all possible coefficient groups.

\begin{rem}
Our main results extend to other homology theories. 
\begin{itemize}
\item [(1)] Since odd Khovanov homology agrees with the usual Khovanov homology over $\Z_2$-coefficients~\cite{ORS13}, we can use our main results and the universal coefficient theorem to deduce that Theorems~\ref{thm:khovanov_positivity} and~\ref{thm:pos_fib} hold also true for odd Khovanov homology groups with rational coefficients. In fact, experimental data suggest that they hold true for any coefficient group. The odd Khovanov homology groups fit into the same long exact sequence as the usual Khovanov homology groups~\cite{ORS13} and thus Theorem~\ref{thm:KHcable} and Corollary~\ref{cor:KHL-spaceCable} hold true (by the same proofs) for odd Khovanov homology with rational coefficients as well.
\item [(2)] If $L$ is a fibered, positive link, then all the Khovanov--Rozansky $\mathfrak{sl}(n)$ homology groups~\cite{KR_homology} in homological grading $1$ vanish, i.e. $KhR_n^{1,j}(L)=0$ for all $j\in\Z$ and all $n\geq2$.
For that, we observe that the proof of Theorem~\ref{thm:pos_fib} (and possibly also that of Theorem~\ref{thm:khovanov_positivity}) can be adapted to work for Khovanov--Rozansky homologies as outlined in~\cite{Stosic}.
By using~\cite{Kang} the statement from Corollary~\ref{cor:concordance} also generalizes to Khovanov--Rozansky homology.
\end{itemize}
\end{rem}

\subsection{Experimental data} 
Theorems~\ref{thm:khovanov_positivity} and~\ref{thm:pos_fib} can be seen as obstructions for a knot to be positive. From the behavior of Khovanov homology under mirroring a knot, we see that this also gives an obstruction for a knot being negative. This obstruction seems to be strong, since for low-crossing knots, Khovanov homology detects positivity: Among the $2977$ prime knots with at most $12$ crossings, exactly $246$ are positive or negative~\cite{KnotInfo}. Theorems~\ref{thm:khovanov_positivity} and~\ref{thm:pos_fib} together with the obstructions from Section~\ref{sec:other_obstructions} obstruct all the other $2731$ knots from being positive or negative~\cite{KnotInfo}. 

From this perspective, the more interesting class of knots is given by the census knots~\cite{Dunfield}, i.e.~the hyperbolic knots whose complements can be triangulated by at most $9$ ideal tetrahedra. Thus these knots have simple complements. However, the simplest known diagrams of some census knots have almost $300$ crossings. In~\cite{CensusKnotInvariants} the Khovanov homology (with rational coefficients) for exactly $672$ of the $1267$ census knots was computed. Among those $672$ knots, we found positive or negative diagrams for $334$~\cite{CensusKnotInvariants}. To obstruct positivity (or negativity) for the remaining $338$ knots we applied the obstruction from Khovanov homology. Again this obstruction turns out to be strong, although not perfect. We can obstruct all but $9$ of the remaining knots from being positive or negative. We refer to Figure~\ref{fig:census_exampl} for an example. 
The data mentioned above can be accessed at~\cite{data}.

 \begin{figure}
     \centering
      \includegraphics[width=.69\textwidth]{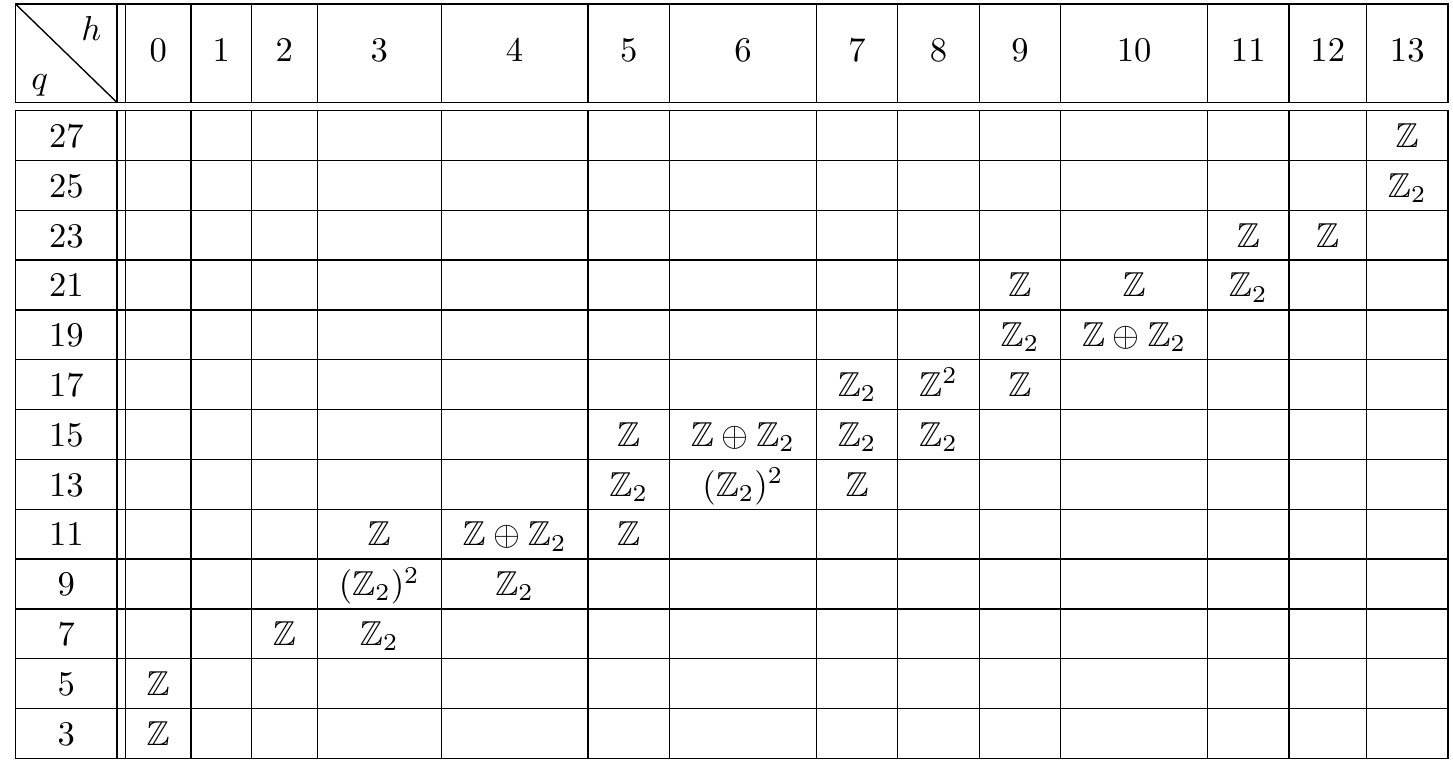} \includegraphics[width=.3\textwidth]{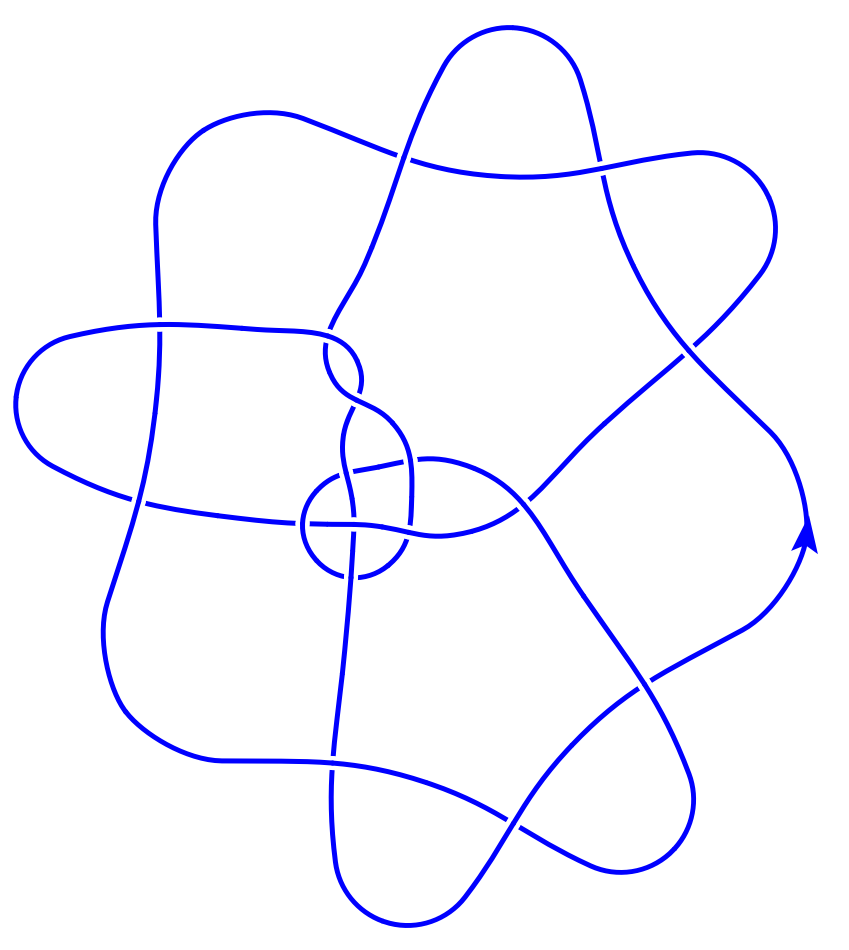} 
     \caption{A diagram of the mirror of the census knot $o9\_34097$ and its Khovanov homology table. Since it is not fibered, Theorem~\ref{thm:pos_fib} implies that it is not positive.}
     \label{fig:census_exampl}
 \end{figure}

\subsection*{Acknowledgment}
This project was initiated during a discussion between M.K.\ and M.S.\ at the semester program ``Braids'' (Feb 1- May 6, 2022) at the Institute for Computational and Experimental Research in Mathematics (ICERM). We thank ICERM for the invitation and financial support. We also thank Lukas Lewark for 
explaining to us why the $(2,1)$-cable of the trefoil is a strongly quasipositive, fibered knot with non-vanishing first Khovanov homology, which was the initiation for writing Section~\ref{sec:comp}. The authors wish to thank Chris Wendl, for his hospitality when M.S.\ visited the HU Berlin and to the group P20-01109 at the University of Seville, where part of this research was conducted.

N.M.\ is funded by the Deutsche Forschungsgemeinschaft (DFG, German Research Foundation) under Germany's Excellence Strategy – The Berlin Mathematics Research Center MATH+ (EXC-2046/1, project ID: 390685689).

M.S. is partially supported by Spanish Research Project PID2020-117971GB-C21, by IJC2019-040519-I, funded by MCIN/AEI/10.13039/501100011033 and by P20-01109 (JUNTA/FEDER).  

\section{Preliminaries}\label{sec:prelim}
Let $D$ be a link diagram and let $cr(D)=\{c_1, \ldots, c_n\}$ be the set of its crossings. A \textit{state} $s$ assigns a marker $0$ or $1$ to each crossing of $D$, that is $s \colon cr(D) \to \{0,1\}$. Let $\mathcal{S}(D)$ be the collection of $2^n$ possible states of $D$. We write $s_0$ for the state assigning a $0$-marker to every crossing. 

Given $s \in \cS(D)$, the \textit{resolution} $sD$ corresponds to the diagram obtained after smoothing every crossing $c_i \in cr(D)$ according to its marker $s(c_i)$ following Figure~\ref{fig:markers}(a). 

\begin{figure}[htbp]
\centering
\includegraphics[width = 11cm]{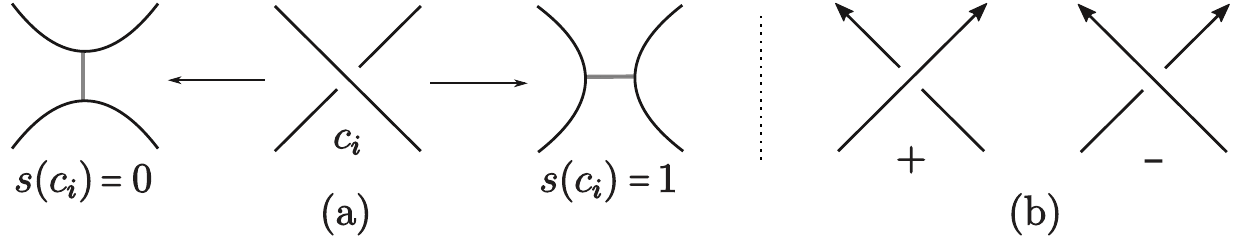}
\caption{The smoothing of a crossing according to its $0$ or $1$ marker and the sign (positive or negative) of a crossing are shown in (a) and (b), respectively.}
\label{fig:markers}
\end{figure}

The resolution $sD$ consists of a collection of $|sD|$ circles embedded in the plane together with some $0$- and $1$-{\it{chords}} (segments connecting two circles in the place where there was a crossing). See Figure \ref{fig:resolution}(b). 

\begin{figure}[htbp]
\centering
\includegraphics[width = 11.9cm]{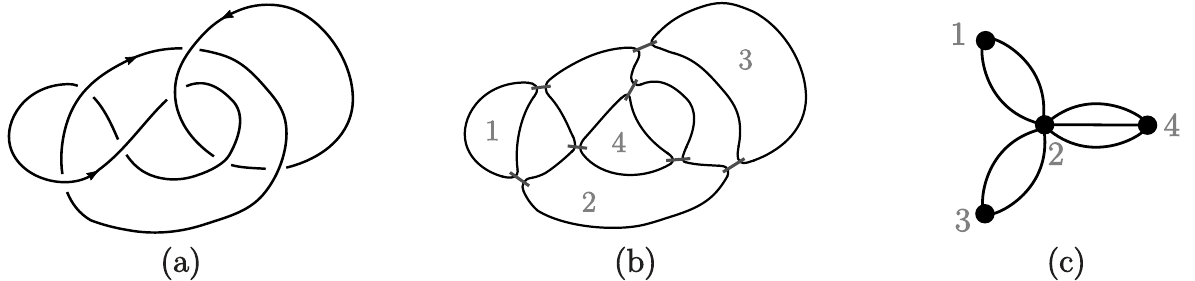}
\caption{A positive diagram $D$, its resolution $s_0D$, and the associated state graph $G_0(D)$ are shown in (a), (b), and (c), respectively.}
\label{fig:resolution}
\end{figure}

\begin{definition}
Given a state $s\in \cS(D)$, the \textit{state graph} $G_s(D)$ is a planar graph obtained by collapsing each circle of $sD$ to a vertex so that each chord in $sD$ becomes an edge in $G_s(D)$. The circles and chords of $sD$ are in bijection with the vertices and edges of $G_s(D)$. See Figure \ref{fig:resolution}(c). 
\end{definition} 

The graph $G_s(D)$ may contain loops and multi-edges (i.e.~edges connecting the same pair of vertices, also called parallel edges). Given a graph $G$, its associated \textit{reduced graph} $G^{red}$ is the graph obtained from $G$ by identifying parallel edges. We write $G_0(D)$ to denote the state graph associated with the state $s_0$. 

Recall that every crossing in an oriented diagram can be classified into positive or negative, according to the convention shown in Figure \ref{fig:markers}(b). An oriented link is \textit{positive} if it admits a positive diagram, i.e.~a diagram with no negative crossings. Observe that the $0$-smoothing of a positive crossing agrees with its \textit{Seifert smoothing} (that is, the only smoothing preserving orientation), and therefore the \textit{Seifert graph} of a positive diagram $D$ coincides with $G_0(D)$.

 \begin{definition}
 The cyclomatic number of a graph $G$ is the minimal number of edges that one must delete to transform $G$ into a forest. It can be computed as $p(G) = e-v+k$, where $v, e$ and $k$ denote the number of vertices, edges and connected components of $G$, respectively. Given a positive link $L$, we define its cyclomatic number $p_1(L)$ as the cyclomatic number of the reduced Seifert graph associated with any positive diagram representing $L$. This number is independent of the positive diagram (compare to Theorem~\ref{thm:khovanov_positivity} and~\cite{almost_extremeKH}). 
 \end{definition}

\begin{theorem}[Stoimenow~\cite{Stoi_MR2159224}, cf.~\cite{Futer, Buch_arxiv.2204.03846}]\label{thm:burdened_fibered+positive}
A positive diagram $D$ represents a fibered link if and only if its reduced Seifert graph $G_0(D)^{red}$ is a tree. 
\end{theorem}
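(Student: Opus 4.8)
My plan is to read everything off the Seifert surface $\Sigma=\Sigma(D)$ that Seifert's algorithm produces from the positive diagram $D$. Because $D$ is positive, the $0$-smoothing is the orientation-preserving (Seifert) smoothing, so $\Sigma$ is built from one disk for each Seifert circle (vertex of $G_0(D)$) together with one positively half-twisted band for each crossing (edge of $G_0(D)$); hence $\Sigma$ deformation retracts onto $G_0(D)$ and $b_1(\Sigma)=p(G_0(D))$. The central observation is that two parallel bands joining the same two disks form a positive Hopf band, and that deleting one band of such a pair realizes $\Sigma$ as a plumbing (Murasugi sum) of a positive Hopf band with a simpler surface. Iterating, the passage from $G_0(D)$ to $G_0(D)^{red}$ -- identifying parallel edges -- corresponds to successively de-plumbing positive Hopf bands, producing a surface $\Sigma^{red}$ that retracts onto $G_0(D)^{red}$. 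By the theorem of Stallings and Gabai that a Murasugi sum is a fiber surface if and only if both of its summands are, and since the positive Hopf band is the fiber of the positive Hopf link, $\Sigma$ is a fiber surface if and only if $\Sigma^{red}$ is.

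For the implication ``tree $\Rightarrow$ fibered'', suppose $G_0(D)^{red}$ is a tree. Then $\Sigma^{red}$ consists of disks joined by single bands along a tree, so it is itself a single disk, i.e. the fiber surface of the unknot. Reassembling $\Sigma$ from $\Sigma^{red}$ by plumbing back the positive Hopf bands exhibits $\Sigma$ as an iterated Hopf plumbing onto a disk, which is a fiber surface by Stallings. Therefore $L=\partial\Sigma$ is fibered; note that this direction needs neither minimality of $\Sigma$ nor uniqueness of the fiber.

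For the converse I would argue through the Alexander polynomial $\Delta_L(t)\doteq\det(tV-V^{T})$ of any Seifert matrix $V$ of $\Sigma$, whose top coefficient is $\pm\det V$ and which must be a unit whenever $L$ is fibered. Each Hopf plumbing enlarges the Seifert matrix by a block-triangular step carrying a $\pm1$ diagonal block, so $\det V=\pm\det V^{red}$, where $V^{red}$ is the Seifert form of $\Sigma^{red}$ computed in the cycle basis $H_1(\Sigma^{red})\cong H_1(G_0(D)^{red})$. If $G_0(D)^{red}$ is not a tree, it contains a loop or an embedded cycle through at least three distinct vertices, and the heart of the matter is to show that the self-framings recorded on the diagonal of $V^{red}$ then force $|\det V^{red}|\ge 2$: a bigon (already removed in passing to the reduced graph) has self-framing $\pm1$, whereas a genuine cycle of length at least three through single positively twisted bands has self-framing of absolute value at least two. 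Consequently $\Delta_L$ is not monic and $L$ is not fibered.

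The main obstacle is exactly this last combinatorial estimate -- controlling the self-framings of the cycle generators of $\Sigma^{red}$ (Stoimenow's \emph{burden}) and deducing the bound $|\det V^{red}|\ge 2$ for a non-tree reduced graph -- which is the genuine technical core carried out in \cite{Stoi_MR2159224}. A secondary, more bookkeeping-type difficulty is to check that parallel bands can always be isotoped into standard Hopf-plumbing position and that the tree case assembles as an honest iterated Murasugi sum; here I would use that two Seifert circles joined by several crossings meet along arcs of the diagram along which those crossings appear consecutively, so that each parallel pair bounds an embedded plumbing square.
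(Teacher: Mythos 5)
This is a statement the paper itself does not prove: it is quoted from Stoimenow~\cite{Stoi_MR2159224} (cf.~\cite{Futer, Buch_arxiv.2204.03846}), so there is no in-paper argument to compare against and your proposal must stand on its own. Your ``tree $\Rightarrow$ fibered'' direction is the standard and correct route (iterated plumbing of positive Hopf bands onto a disk, then Stallings/Gabai), up to two repairable inaccuracies. First, the Seifert graph is bipartite, so $G_0(D)^{red}$ has no loops and all its cycles have length at least four, not three. Second, your justification for the plumbing position --- that crossings joining two fixed Seifert circles occur consecutively along those circles --- is false in general: a band to a third circle can be attached to the arc between two parallel bands. The clean fix is to first Murasugi-sum-decompose $\Sigma(D)$ along the cut vertices of $G_0(D)$; when $G_0(D)^{red}$ is a tree each block is two disks joined by $k$ parallel bands, i.e.\ the fiber surface of a $(2,k)$-torus link, and no interleaving can occur.

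The genuine gap is in the converse. You reduce everything to the inequality $|\det V^{red}|\ge 2$ when $G_0(D)^{red}$ is not a tree, and the only evidence you offer is that the diagonal self-framings of the cycle generators have absolute value at least two. That does not imply the determinant bound: an integer matrix with every diagonal entry of absolute value at least $2$ can have determinant $\pm1$, e.g.\ $\bigl(\begin{smallmatrix}2&1\\3&2\end{smallmatrix}\bigr)$, so the off-diagonal entries of the Seifert form must be controlled as well. This estimate is precisely the content of Stoimenow's computation of the leading coefficient of the Alexander polynomial of a positive diagram, and you concede that you are deferring it to the reference; as written, the converse is therefore an appeal to authority at exactly the point where the work lies. (The surrounding logic is fine: since $\Delta_L\doteq\det(tV-V^{T})$ for \emph{any} Seifert matrix $V$, a value $|\det V|\ge 2$ does force a non-monic Alexander polynomial and hence non-fiberedness, with no need for minimality or uniqueness of $\Sigma$; and the case $p_1=1$ is genuinely easy, since a single cycle of length $2k\ge 4$ gives $\det V^{red}=\pm k$. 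It is the case $p_1\ge 2$ that remains unproved in your sketch.)
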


As an example, we refer to Figure~\ref{fig:resolution} where the reduced Seifert graph $G_0(D)^{red}$ is a tree with 3 leaves and thus $D$ represents a fibered link. 

\section{Khovanov homology}
In this section, we briefly review the definition of Khovanov homology, following \cite{Bar2002}. Given a diagram $D$, we first define a complex $\lbracket D\rbracket$ which, after some grading shiftings, becomes the Khovanov complex $\mathcal{C}_D$. 

Let $D$ be an oriented link diagram with $n$ ordered crossings $c_1, \ldots, c_n$. This induces an identification of the set $\mathcal{S}(D)$ of states with the elements in $\{0,1\}^n$ defined by $s = (s_1, \ldots, s_n) \in \{0,1\}^n$, with $s(c_j)=s_j$. Write $|s|=\Sigma_{j=1}^n s_j$. 

Consider the unit cube of dimension $n$, where each vertex consists of a state of $D$ and there is an edge from the state $s$ to the state $s'$ iff they are identical except in their $k^{th}$ coordinate, where $s_k=0$ and $s'_k=1$. Let $V$ be the graded vector space with two basis elements, $v_+$ and $v_-$ of degrees $1$ and $-1$, respectively. We associate to each vertex of the cube $s\in 2^n$ the graded vector space\footnote{Given a graded vector space $W= \oplus_m W_m$, the graded dimension of $W$ is defined as the power series $\operatorname{qdim} W = \Sigma_m q^m \dim W_m$. We denote by $\{l\}$ the \textit{degree shift} operation on $W$ by setting $W\{l\}_m=W_{m-l}$, so $\operatorname{qdim} W\{l\}=q^l \operatorname{qdim} W$.} $V_s(D)=V^{\otimes|sD|}\{|s|\}$. Define a complex $\lbracket D\rbracket$ whose $r^{th}$ chain group is defined as $\lbracket D\rbracket^r= \oplus_{|s|=r} V_s(D)$. 

We define the differential $d^r: \lbracket D\rbracket^r \longrightarrow \lbracket D\rbracket^{r+1}$. Let $\xi$ be an edge of the cube oriented from a state $s$ to a state $s'$ satisfying $s(j)=s'(j)$ for all $j \neq k$, $s(k)=0$ and $s'(k)=1$, for $1\leq j,k\leq n$. Define the height $|\xi| $ of the edge $\xi$ as $|\xi|=|s|$ and its sign as $\operatorname{sgn}(\xi)= \Sigma_{j<k}s(j)$, with $k$ as before. 

Resolutions $sD$ and $s'D$ are identical except in a neighborhood of the $k^{th}$ crossing, where either one circle in $sD$ splits into two circles in $s'D$ or two circles in $sD$ merge into one in $s'D$; we call the associated transformations \textit{splitting} and \textit{merging}, respectively. The map $d_{\xi}\colon V_s(D) \longrightarrow V_{s'}(D)$ is defined as the identity on the tensor factors corresponding to the circles that are preserved and as either a multiplication $m:~V \otimes V \rightarrow V$ or a comultiplication $\Delta: V \rightarrow V \otimes V$ on the factors corresponding to the circles involved in a merging or a splitting, respectively. These linear maps are defined by:
$$
  \begin{array}{lllllllllll}
  m (v_+ \otimes v_+) = v_+ & & & & & &&& \Delta(v_+) = v_+ \otimes v_- + v_- \otimes v_+ \\ 
  m (v_+ \otimes v_-) = v_- & & & & & &&& \Delta(v_-) = v_- \otimes v_- \\
  m (v_- \otimes v_+) = v_- \\
  m (v_- \otimes v_-) = 0 \\
  \end{array}
$$

The differential of the complex is defined as $d^r = \Sigma_{|\xi|=r} (-1)^{\operatorname{sgn}(\xi)} d_{\xi}$. With the chain groups and differential defined above, $\lbracket D\rbracket$ is indeed a chain complex. We write $H^r(D)$ for its $r^{th}$ homology group.  

The \textit{height shift} operation $[p]$ on a chain complex $\mathcal{A} \colon \ldots \to A^r \stackrel{d^r}{\to} A^{r+1} \to$ is defined as $\mathcal{A}[p] = \mathcal{B}$, where $B^r=A^{r-p}$, with all differentials shifted accordingly. 

\begin{definition}\label{def:khovcomplex}
Given a link diagram $D$ with $n_+$ positive and $n_-$ negative crossings, the \textit{Khovanov complex} $\mathcal{C}_D$ is defined as $\lbracket D\rbracket [-n_-] \{n_+-2n_-\}$. We denote its $i^{th}$ homology group by $Kh^i(D)$.
\end{definition}

Notice that each element in $\lbracket D\rbracket$ has two gradings: $r$ coming from the chain group and a second grading $q$ coming from the internal degree as an element in $V$. If we denote $\lbracket D\rbracket^{r,q}$ the subset of elements of $\lbracket D\rbracket^r$ with second grading $q$, then $\lbracket D\rbracket^{r} =\bigoplus_{q \in \mathbb{Z} }\lbracket D\rbracket^{r,q}$
and $Kh^{i,j}(D) = H^{r, q}(D)$, with $r=i+n_-$ and $q=j-n_++2n_-$. We call such $i$ and $j$ {\it{homological}} and {\it{quantum}} gradings, respectively.  

\begin{theorem}\cite{Khovanov_homology, Bar2002}
For every $i, j \in \mathbb{Z}$, the isomorphism types of the groups $Kh^{i,j}(D)$ are link invariants. These groups are called \textit{Khovanov homology groups} of the link. 
\end{theorem}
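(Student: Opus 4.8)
The plan is to invoke Reidemeister's theorem: any two diagrams $D$ and $D'$ representing the same oriented link are connected by a finite sequence of planar isotopies and the three Reidemeister moves $R1$, $R2$, $R3$. Since a planar isotopy leaves the cube of resolutions (and hence $\lbracket D\rbracket$) literally unchanged, it suffices to produce, for each Reidemeister move relating $D$ to $D'$, a chain homotopy equivalence between the Khovanov complexes $\mathcal{C}_D$ and $\mathcal{C}_{D'}$. A chain homotopy equivalence induces isomorphisms on homology, so once the maps are checked to respect both the homological grading $r$ and the quantum grading $q$, the grading shifts $[-n_-]\{n_+-2n_-\}$ in Definition~\ref{def:khovcomplex} transport this to an isomorphism $Kh^{i,j}(D)\cong Kh^{i,j}(D')$ for every pair $(i,j)$. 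The recurring technical tool is the cancellation (Gaussian elimination) lemma: if the differential of a complex contains an isomorphism as one of its matrix entries between two summands, then those two summands may be deleted to yield a chain homotopy equivalent complex.

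First I would treat the two local moves. For $R1$, the diagram with the extra curl has one additional crossing; resolving it, the $0$- and $1$-smoothings differ in that exactly one of them acquires a small disjoint circle. Using the Frobenius structure of $V$ -- in particular that $v_+$ is the unit and that $m$ and $\Delta$ encode how this circle is created and destroyed -- one identifies an acyclic direct summand (carried by $v_+$ or $v_-$ according to the sign of the curl) and cancels it, leaving a complex homotopy equivalent to $\lbracket D'\rbracket$ after the shift accounting for the changed crossing. For $R2$, the two crossings contribute a $2\times2$ subcube with four vertices; two of these vertices are joined by a component of the differential that, after a delooping, is an isomorphism, so Gaussian elimination collapses the subcube and exhibits $\lbracket D\rbracket\simeq\lbracket D'\rbracket$. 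In both cases the quantum grading is preserved by construction, and the only bookkeeping is to match $n_\pm$ on the two sides so that the shifts agree.

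The main obstacle is the $R3$ move, where three crossings generate a $3$-cube of eight resolutions on each side and the two complexes are not isomorphic on the nose. The cleanest route, following Bar-Natan~\cite{Bar2002}, is to lift the whole construction to the category of complexes of formal $\Z$-linear combinations of cobordisms between resolution diagrams, modulo the local sphere, torus, and neck-cutting (four-tube) relations, and to carry out the $R1$ and $R2$ simplifications there first. Applying these delooping moves to both sides of $R3$ reduces each to the same simplified complex, built from a single crossing's worth of data layered over the unchanged part of the diagram; comparing the two then becomes a finite, explicit matching verified by hand. Applying the TQFT functor determined by the Frobenius algebra $(V,m,\Delta)$ finally sends this cobordism-level homotopy equivalence to a bigraded homotopy equivalence of the complexes $\mathcal{C}_D$ and $\mathcal{C}_{D'}$, completing the argument. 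The delicate part throughout is the sign bookkeeping in the differential $d^r=\sum_{|\xi|=r}(-1)^{\operatorname{sgn}(\xi)}d_\xi$, which must be tracked carefully to ensure that the constructed maps genuinely commute with the differentials.
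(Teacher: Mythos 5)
This theorem is imported by the paper from the literature (\cite{Khovanov_homology, Bar2002}) and is not proved in the text, so there is no in-paper argument to compare against line by line; your sketch is the standard invariance proof from exactly those sources, and it is correct in outline. Two small remarks on attribution and completeness. First, the cobordism-category formulation you use for $R3$ (complexes of cobordisms modulo the sphere, torus, and neck-cutting relations, with delooping and Gaussian elimination performed before applying the TQFT) is really the method of Bar-Natan's later \emph{tangles and cobordisms} paper; the 2002 reference cited here, following Khovanov, instead writes down explicit chain maps and chain homotopies for each move directly at the level of the complexes $\mathcal{C}_D$. Both routes are valid, and yours is arguably cleaner for $R3$, at the cost of having to verify that the local relations are respected by the Frobenius algebra $(V,m,\Delta)$. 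Second, to make the argument airtight you must check invariance for \emph{all} oriented variants of each Reidemeister move (both curls for $R1$, both braid-like and non-braid-like $R2$, and the several sign patterns of $R3$), or invoke the standard reduction to a generating set of oriented moves; this is where the matching of $n_\pm$ and the grading shifts $[-n_-]\{n_+-2n_-\}$ genuinely has to be tracked case by case, as you note. With those caveats, the proposal is a faithful summary of the proof in the cited references.
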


Later, we will need the behavior of Khovanov homology under the addition of a split unknot component, which we record here.

\begin{lemma}\label{lem:kunneth}
Let $L \sqcup \bigcirc$ denote the split union of a link $L$ with an unknot. Then for every $i\in \mathbb{Z}$ $$Kh^i(L \sqcup \bigcirc) =  Kh^i(L)\oplus Kh^i(L).$$ In particular, $Kh^i(L)=0$ implies that $Kh^i(L \sqcup \bigcirc) = 0$.
\end{lemma}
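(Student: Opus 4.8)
The plan is to show, at the level of chain complexes, that adding a split unknot simply tensors the Khovanov complex with the two-dimensional space $V$ placed in homological degree zero, and then to observe that homology commutes with this operation because $V$ is free. First I would represent $L \sqcup \bigcirc$ by a diagram of the form $D \sqcup \bigcirc$, where $D$ is a diagram of $L$ and $\bigcirc$ is a crossingless circle drawn disjointly from $D$. Since the extra circle carries no crossings, the cube of resolutions of $D \sqcup \bigcirc$ is canonically identified with that of $D$: a state $s$ of $D \sqcup \bigcirc$ is nothing but a state of $D$, the height $|s|$ is unchanged, and every resolution $s(D \sqcup \bigcirc)$ has exactly one more circle than $sD$. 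Consequently $V_s(D \sqcup \bigcirc) = V^{\otimes(|sD|+1)}\{|s|\} = V_s(D) \otimes V$.

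Next I would check that the differential respects this factorization. Each edge map $d_\xi$ of $D \sqcup \bigcirc$ acts by a multiplication or comultiplication on the circles taking part in the $k$-th smoothing and by the identity on all remaining circles, in particular on the spectator unknot. Hence $d_\xi = d_\xi^{D} \otimes \mathrm{id}_V$, where $d_\xi^{D}$ is the corresponding edge map of $D$, and because the signs $(-1)^{\operatorname{sgn}(\xi)}$, the height shift $[-n_-]$, and the quantum shift $\{n_+-2n_-\}$ all depend only on the crossings of $D$, the entire construction factors as $\mathcal{C}_{D \sqcup \bigcirc} \cong \mathcal{C}_D \otimes_\Z V$, with $V$ regarded as a complex concentrated in homological degree $0$.

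Finally I would pass to homology. Because $V \cong \Z^2$ is a free abelian group, the functor $-\otimes_\Z V$ is exact and therefore commutes with taking homology (equivalently, the Künneth theorem applies with all $\mathrm{Tor}$-terms vanishing). This gives $H^r(\mathcal{C}_D \otimes V) \cong H^r(\mathcal{C}_D) \otimes V \cong H^r(\mathcal{C}_D) \oplus H^r(\mathcal{C}_D)$, which upon summing over all quantum gradings is exactly $Kh^i(L \sqcup \bigcirc) \cong Kh^i(L) \oplus Kh^i(L)$. Keeping track of the internal degree, the two summands are the shifts by $+1$ and $-1$ induced by $v_+$ and $v_-$, yielding the refined identity $Kh^{i,j}(L \sqcup \bigcirc) \cong Kh^{i,j-1}(L) \oplus Kh^{i,j+1}(L)$. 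I do not expect any genuine obstacle: the only delicate point is the grading bookkeeping, and the freeness of $V$ is precisely what makes the splitting clean, so the same argument goes through verbatim over an arbitrary coefficient ring.
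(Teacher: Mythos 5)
Your proof is correct and follows essentially the same route as the paper, which simply invokes the K\"unneth formula for Khovanov homology together with $Kh^{0}(\bigcirc)=\Z^2$; you have merely unpacked that citation into an explicit chain-level argument ($\mathcal{C}_{D\sqcup\bigcirc}\cong\mathcal{C}_D\otimes V$ with $V$ free, so no $\mathrm{Tor}$ terms), and your bigraded refinement $Kh^{i,j}(L\sqcup\bigcirc)\cong Kh^{i,j-1}(L)\oplus Kh^{i,j+1}(L)$ is also correct.
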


\begin{proof}
The result follows from the Künneth formula in Khovanov homology~\cite{Khovanov_homology} together with the fact that $Kh^{i}(\bigcirc)$ equals $\mathbb{Z}^2$ if $i=0$, and is vanishing in all other gradings. 
\end{proof}

\section{Khovanov homology of positive links}\label{sec:main_proof}

This section is devoted to proving Theorems~\ref{thm:khovanov_positivity} and~\ref{thm:pos_fib}. We also show that the latter is a proper generalization of the result in~\cite[Theorem 3.1]{Stosic} by presenting an infinite family of fibered, positive knots which are not closures of positive braids. In this section, we assume all positive links to be non-split without loss of generality (otherwise, just consider each split component independently and apply~\cite[Proposition 33]{Khovanov_homology}). We will deduce Theorems~\ref{thm:khovanov_positivity} and~\ref{thm:pos_fib} from the following more technical statement. 

 \begin{theorem}\label{thm:technical}
     Let $D$ be a reduced positive diagram of a link $L$ and let $D'$ be another positive diagram of a link $L'$ such that $G_0(D')=G_0(D)^{red}$. Then $Kh^1(L)=Kh^1(L')$.  
 \end{theorem}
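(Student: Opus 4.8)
The plan is to prove Theorem~\ref{thm:technical} by an inductive reduction of twist regions, using the unoriented skein long exact sequence together with the positivity obstructions recalled in Section~\ref{sec:other_obstructions}. First I observe that the reduced Seifert graph $G_0(D)^{red}$ is obtained from $G_0(D)$ by collapsing each bundle of parallel edges to a single edge, and that such a bundle of $k$ parallel edges corresponds exactly to a twist region of $k$ positive crossings between two Seifert circles. Passing from $D$ to $D'$ therefore amounts to reducing each twist region to a single crossing. Taking $D'$ to be the diagram obtained from $D$ by collapsing all bundles (which indeed has $G_0(D')=G_0(D)^{red}$), it suffices, by induction on the total excess $\sum_{\text{bundles}}(\mathrm{size}-1)$, to treat the case where $D'$ arises from $D$ by deleting a single crossing from one bundle of size $k\geq 2$. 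Every intermediate diagram is again positive, so the positivity results of Section~\ref{sec:other_obstructions} remain available throughout.

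Next I apply the skein long exact sequence at a crossing $c$ lying in the chosen bundle. Since $c$ is positive, its oriented ($0$-) resolution is the Seifert smoothing, which simply decreases the twist region by one crossing; hence the oriented resolution is precisely $D'$. For the $1$-resolution $D^1_c$, the key geometric point is that $1$-smoothing one crossing of the twist region produces a local cap--cup, and the crossing adjacent to it inside the bundle then becomes a Reidemeister~I curl. Iterating, $(k-1)$ applications of Reidemeister~I identify $D^1_c$, up to grading shifts, with a fixed diagram $E$ in which the \emph{entire} bundle is replaced by a cap--cup pinch. Crucially, $E$ is independent of $k$ and, since pinching introduces no new crossings, $E$ is again a positive diagram (with strictly fewer crossings than $D$), possibly of a split link, in which case Lemma~\ref{lem:kunneth} applies componentwise.

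Feeding these identifications into the relevant portion of the long exact sequence, schematically
\[
\cdots \longrightarrow Kh^{*}(E) \longrightarrow Kh^{1}(D) \longrightarrow Kh^{1}(D') \longrightarrow Kh^{*}(E) \longrightarrow \cdots,
\]
the desired isomorphism $Kh^{1}(D)\cong Kh^{1}(D')$ follows provided the two flanking $E$-terms vanish in the quantum gradings adjacent to $Kh^{1}$. This is where the positivity obstructions of Section~\ref{sec:other_obstructions} enter: for the positive link represented by $E$ one has $Kh^{i}(E)=0$ for $i<0$, $Kh^{0}(E)$ supported only in quantum gradings $-\chi(E)$ and $2-\chi(E)$, and $Kh^{i,2-\chi(E)}(E)=0$ for $i\neq 0,1$. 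The plan is to compute $\chi(E)$ and compare it, after incorporating the normalization shifts coming from the differing crossing numbers $n_+$ of $D$, $D'$, and $E$, against the target grading $j=2-\chi(L)$ of $Kh^{1}(D)$, so that the contributions of $Kh^{0}(E)$ and $Kh^{1}(E)$ land outside their allowed support and are forced to be zero.

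The main obstacle I anticipate is precisely this grading bookkeeping: one must track the quantum shifts in the skein sequence and in the $(k-1)$ Reidemeister~I moves simultaneously, and verify that the relevant gradings of $E$ do not coincide with the two quantum levels supporting $Kh^{0}(E)$ nor with the level where $Kh^{1}(E)$ may be nonzero. Establishing this non-coincidence—equivalently, showing that the extra parallel crossings of $D$ contribute only to homological gradings $\geq 2$ and never to $Kh^{1}$—is the technical heart of the argument, and is exactly the point at which the interplay between the bigon/Reidemeister~I structure and the Euler-characteristic normalization must be made precise.
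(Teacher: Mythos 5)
Your overall strategy (induct on the excess parallel crossings and kill the flanking terms of the skein long exact sequence) is genuinely different from the paper's proof, but as written it has two gaps, one of which is fatal in general. First, the diagram $E$ obtained by $1$-smoothing a crossing of the bundle and cancelling the resulting curls is \emph{not} a positive diagram in general: the $1$-resolution of a positive crossing is the disoriented resolution, so the pinch forces a reorientation of some arc of $E$, and crossings between the reoriented piece and the rest of the diagram change sign. (The paper runs into exactly this in the proof of Lemma~\ref{lem:last_group}, where it concedes that ``$D_1$ is in general not positive'' and has to substitute adequacy/alternating structure plus an external result to handle it.) Without positivity of $E$ you lose access to every obstruction from Section~\ref{sec:other_obstructions} that your argument relies on. Second, even where $E$ happens to be positive, the flanking term you need to kill sits in homological degree $1-c$ of $E$, where $c$ is the number of crossings that become negative after reorienting; when $c=0$ this is $Kh^{1}(E)$, and the only thing Section~\ref{sec:other_obstructions} tells you about $Kh^{1}$ of a positive link is that it vanishes outside quantum grading $2-\chi$ --- full control of $Kh^{1}$ in that grading is precisely Theorem~\ref{thm:khovanov_positivity}, the statement this technical theorem is feeding into, so the argument is in danger of circularity. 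You correctly flag the grading bookkeeping as the ``technical heart,'' but it is not merely bookkeeping: in the $c=0$ and $c=1$ cases the flanking groups are genuinely allowed to be nonzero by the known obstructions, and you would need an additional idea to show the connecting maps still behave.

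For comparison, the paper avoids the long exact sequence entirely here. It works at the chain level: writing $\lbracket D\rbracket^1=\bigoplus_i A_i$ with one summand per crossing, it shows that the component of $d^1$ into the summand $A_{ij}$ indexed by two \emph{parallel} edges is $\overline{\delta}_{iij}\oplus\overline{\delta}_{jij}=(-\Delta)\oplus\Delta$ on the last tensor factor, and since $\Delta$ is injective any cycle must have equal $A_i$- and $A_j$-components under the preferred identifications. Hence $\ker d^1$ lives in the diagonal subgroup indexed by the reduced graph, and identifying parallel coordinates computes the same $H^1$, giving $Kh^1(L)=Kh^1(L')$ directly (with no claim about quantum gradings, which indeed differ between $L$ and $L'$). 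If you want to salvage your route, you would need to replace ``$E$ is positive'' by an honest analysis of $E$ as an adequate or alternating-type diagram, as the paper does in Lemma~\ref{lem:last_group}, and verify the grading non-coincidence crossing by crossing; the chain-level argument is considerably shorter.
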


We show in Lemma~\ref{lem:existence} below that such a diagram $D'$ always exists. However, neither $D'$ nor $L'$ are unique, since the Seifert circles and the chords between them might be nested and linked. But Theorem~\ref{thm:technical} tells us that the first Khovanov homology does not depend on that extra information; however the exact quantum grading does, cf.~Theorem~\ref{thm:khovanov_positivity}. For a concrete example we refer to Figure~\ref{fig:example_proof}.

 
\begin{proof}[Proof of Theorem~\ref{thm:technical}]
We will analyze the first homology group $H^1(D)$ of the complex $\lbracket D \rbracket$. For that, it will be enough to consider $d^1\colon\lbracket D \rbracket^1\rightarrow \lbracket D \rbracket^2$. First, we describe these chain groups and $d^1$ in more detail. Since our statement is independent of the quantum grading we can, for ease of notation, omit the degree shift operation on the chain groups.

Consider $G = G_0(D)$ and write $E=\{e_1, \ldots, e_n\}$ for its edges, where $e_j$ is associated with the crossing $c_j$, and $l=|s_0D|$ for its number of vertices. The states contributing to $\lbracket D \rbracket^1$ are those assigning a $1$-marker to a single crossing, and therefore 
$$\lbracket D \rbracket^1 = \bigoplus_{i=1}^{n} A_{i}$$
where the summand $A_{i}$ is associated to the state $s^i$ assigning a $0$-marker to every crossing but $c_i$. Since $D$ is positive and therefore $0$-adequate, we get that $A_{i}$ is isomorphic to $ V^{\otimes(l-1)}$, for every $1 \leq i \leq n$. 

Generators in $\lbracket D \rbracket^2$ are those states assigning exactly two $1$-markers to the crossings of $D$, and therefore
$$\lbracket D \rbracket^2 = \bigoplus_{\substack{i,j=1 \\ i<j}}^n A_{ij}$$
where the summand $A_{ij}$ corresponds to the state $s^{ij}$ assigning a $1$-marker exactly to the crossings $c_i$ and $c_j$. If $e_i$ and $e_j$ are parallel (i.e., they connect the same pair of vertices) in $G$ then $A_{ij}$ is isomorphic to $V^{\otimes l}$, for every $1 \leq i < j \leq n$.

The differential $d^1 \colon \lbracket D \rbracket^1 \to \lbracket D \rbracket^2$ is given as direct sum of maps $\delta_{kij} \colon A_{k} \to A_{ij}$ corresponding to the edges of the cube of resolutions. 
Whenever $k\neq i,j$, it turns out that $\delta_{kij}$ is the zero map (equivalently, there is no edge connecting the states $s^k$ and $s^{ij}$ in the cube). 
Given two parallel edges $e_i$ and $e_j$ in $G$, with $i<j$, consider the following diagram. 

\hfill\\
\begin{xy}
 (0,90)*+{}="empt";(30,90)*+{\lbracket D \rbracket^1}="D1"; (90,90)*+{\lbracket D \rbracket^2}="D2";%
 (30,70)*+{A_i\times A_j}="AiAj"; (90,70)*+{A_{ij}}="Aij";%
 (30,50)*+{V^{\otimes(l-1)}\times V^{\otimes(l-1)}}="VxV"; (90,50)*+{V^{\otimes l}}="V";%
 (60,80)*+{\circlearrowright}="circ1";%
 (60,60)*+{\circlearrowright}="circ2";%
 {\ar@{->}^{d^1} "D1";"D2"};
 {\ar@{->}_{\pi_i\times \pi_j} "D1";"AiAj"};
 {\ar@{->}^{\pi_{ij}} "D2";"Aij"};
 {\ar@{->}^{\delta_{iij}\oplus\delta_{jij}} "AiAj";"Aij"};
 {\ar@{->}_{\phi_i\times\phi_j}^{\cong} "AiAj";"VxV"};
 {\ar@{->}_{\cong}^{\phi_{ij}} "Aij";"V"};
 {\ar@{->}_{\overline{\delta}_{iij}\oplus\overline{\delta}_{jij}} "VxV";"V"};
\end{xy}\\ 

Here $\pi_i$, $\pi_j$, and $\pi_{ij}$ denote the projections to the summands $A_i$, $A_j$, and $A_{ij}$. Observe that the smoothed diagrams $s^iD$, $s^jD$ and $s ^{ij}D$ are identical in $(l-2)$ Seifert circles. 
Order the circles in these smoothed diagrams such that the $p^{th}$ circle coincides in the three of them, for $1\leq p \leq l-2$. These orderings induce preferred isomorphisms denoted by $\phi_i$, $\phi_j$, and $\phi_{ij}$. For $k=i,j$ we define $$\overline \delta_{kij}:=\phi_{ij}\circ\delta_{kij}\circ\phi_k^{-1}.$$

By construction, the diagram above commutes. 
The map $\delta_{iij}$ (resp.~$\delta_{jij}$) is given by the identity map on all factors except the $(l-1)^{th}$ one, corresponding to the last circle in the chosen order, where it acts as the comultiplication map $-\Delta$ (resp.~$\Delta$). Recall that the sign is determined by the condition $i<j$. Thus we conclude 
\begin{alignat*}{11}
    &\overline{\delta}_{iij}\colon v_1\otimes\cdots\otimes v_{l-1} &&\longmapsto &&-&&v_1\otimes\cdots\otimes v_{l-2}\otimes \Delta(v_{l-1}),\\
    &\overline{\delta}_{jij}\colon v_1\otimes\cdots\otimes v_{l-1} &&\longmapsto&&&& v_1\otimes\cdots\otimes v_{l-2}\otimes \Delta(v_{l-1}).
\end{alignat*}
 Now let $x$ be an element in the kernel of $d^1$. Since for $k\neq i,j$ the maps $\delta_{kij}$ are zero and $\Delta$ is injective it follows from the commutativity of the diagram that 
$$\phi_i\circ \pi_i(x)=\phi_j\circ \pi_j(x).$$
Thus we have shown that the kernel of $d^1$ is contained in the subgroup given by elements $(x_1,\ldots,x_n)\in \lbracket D \rbracket^1$ such that $\phi_r(x_r)=\phi_s(x_s)$ if $e_r$ is parallel to $e_s$. Since the $\phi_*$ are isomorphisms, identifying coordinates associated with parallel edges preserves the first homology groups. Thus we get that $H^1(D)=H^1(D')$, where $D'$ is a diagram satisfying $G_0(D')=G^{red}$. 
\end{proof}

In the following two lemmata, we show that $D'$ from Theorem~\ref{thm:technical} can be chosen so that it satisfies some extra conditions. 

\begin{lemma}\label{lem:existence}
The positive diagram $D'$ of $L'$ in Theorem~\ref{thm:technical} can be chosen to be alternating and such that its signature\footnote{Here we use the convention that the signature of a positive link is positive.} is $\sigma(L')=p_1(L)$.
\end{lemma}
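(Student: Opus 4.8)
The plan is to construct $D'$ explicitly from the reduced Seifert graph $G := G_0(D)^{red}$, rather than merely asserting its abstract existence. By Theorem~\ref{thm:technical} we only need a positive diagram $D'$ with $G_0(D') = G$; the extra content here is that we can realize it so that $D'$ is alternating with the prescribed signature. First I would recall that $G$ is a connected planar graph (connectedness following from the non-split assumption) whose cyclomatic number is, by definition, $p_1(L)$. The key idea is that a connected planar graph $G$ can be promoted to a \emph{Seifert graph} of an alternating diagram in a canonical way: embed $G$ in the plane, and replace each edge by a single positive crossing joining the two Seifert circles corresponding to its endpoints. Because $G$ has no parallel edges (it is reduced), each pair of adjacent Seifert circles is joined by exactly one band, and one checks that the resulting diagram $D'$ is alternating --- this is the standard correspondence between (connected, reduced, planar) graphs and reduced alternating diagrams, as in the Tait/Goeritz picture. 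By construction $G_0(D') = G$, so Theorem~\ref{thm:technical} applies and $Kh^1(L) = Kh^1(L')$.

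Next I would pin down the signature. For an alternating link, the signature is computed directly from such a diagram; in particular the Goeritz/Seifert-graph description of the Seifert form shows that for a \emph{positive} alternating diagram whose Seifert graph is $G$, the (positively normalized) signature equals the first Betti number $b_1(G) = p_1(L)$ of the graph. Concretely, the Seifert matrix built from $G$ has a block structure in which the symmetrized form is (up to sign) positive definite on a subspace of dimension $p_1(L)$ coming from the independent cycles of $G$, while the tree part contributes nothing to the signature. I would either invoke the known formula for the signature of positive (or special alternating) links in terms of the reduced Seifert graph, or compute the symmetrized Seifert form of the band-surface spanned by the Seifert circles and check that its signature is $p_1(L)$ directly.

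The main obstacle I expect is the signature computation, not the existence of $D'$. The existence part is essentially a combinatorial realization statement that is routine once one recalls the graph-to-alternating-diagram dictionary. The delicate point is verifying that the symmetrized Seifert form has exactly $p_1(L)$ positive eigenvalues and no negative ones: one must argue that the acyclic (tree) part of $G$ contributes a null space to the \emph{symmetrized} form and that the cycle space contributes a positive-definite block. This requires care with the nesting/linking of Seifert circles, since the off-diagonal linking terms of the Seifert matrix depend on how the circles are arranged in the plane; the claim is that after symmetrization these contributions do not affect the signature count, which is exactly the phenomenon that Theorem~\ref{thm:khovanov_positivity} anticipates (the rank $p_1(L)$ is diagram-independent even though the precise embedding is not). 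I would therefore isolate this as the crux and either cite the signature formula for special alternating links or give the linear-algebra argument that the symmetrized form is positive semidefinite of rank $p_1(L)$.
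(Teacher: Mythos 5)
Your construction of $D'$ is the same as the paper's: embed the reduced Seifert graph in the plane, replace vertices by circles and edges by single positive crossings, so that $G_0(D')=G_0(D)^{red}$ and Theorem~\ref{thm:technical} applies. Two points of divergence. First, your justification that $D'$ is alternating is misattributed: you credit the absence of parallel edges, but reducedness is not what matters (the standard diagram of the $(2,n)$-torus link has a Seifert graph with $n$ parallel edges and is alternating). The paper's reason is that the Seifert graph is \emph{bipartite}, a property every Seifert graph has; that is the combinatorial input that forces the over/under pattern to alternate along each circle. Second, and more substantively, your signature argument takes a genuinely different route. The paper cites Traczyk's skein-theoretic result that Seifert-smoothing a positive crossing in a reduced alternating diagram drops the signature by exactly $1$, and then inducts: $p_1(L)$ such smoothings reduce $D'$ to a diagram whose Seifert graph is a tree, hence to the unknot, giving $\sigma(L')=p_1(L)$ with essentially no linear algebra. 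You instead propose to show the symmetrized Seifert form of the band surface is positive definite of rank $p_1(L)$, or to cite the signature formula for special alternating links. That route is viable --- your $D'$ is a special alternating diagram (its Seifert circles are disjoint, non-nested round circles by construction), so Murasugi's definiteness theorem for special alternating links gives $\sigma(L')=b_1(F)=e-v+1=p_1(L)$ directly; note also that your worry about nesting and linking of Seifert circles does not arise for this particular $D'$, and the ``tree part'' contributes nothing to $H_1(F)$ rather than a null summand. The caveat is that you leave this crux as an either/or sketch rather than carrying it out; to make the proof complete you must actually invoke the special alternating signature theorem (or Traczyk, as the paper does), since the bare assertion that the off-diagonal linking terms do not affect the signature count is exactly the nontrivial content of the lemma.
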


\begin{proof} 
Since $G_0(D)$ is the Seifert graph of $D$, it is planar and bipartite (and so is $G_0(D)^{red}$). We first construct a positive diagram $D'$ such that $G_0(D')= G_0(D)^{red}$. To do so, just choose a planar embedding of $G_0(D)^{red}$ and replace every vertex with a circle and every edge connecting two vertices by a positive crossing connecting the two corresponding circles. By construction, the obtained diagram $D'$ is positive and its Seifert graph equals $G_0(D)^{red}$. Since $G(D')$ is bipartite a simple combinatorial argument shows that $D'$ is alternating. We refer to Figure~\ref{fig:example_proof} for an example.

In order to compute the signature of $L'$ we assume without loss of generality that $D'$ is reduced. We use the result in~\cite{Traczyk} stating that any link $L_0$ obtained from a link $L$ by performing a Seifert smoothing of a positive crossing in a reduced alternating diagram of $L$ satisfies $\sigma(L)=\sigma(L_0)+1$. Since $D'$ is alternating, performing a Seifert smoothing in any subset of its crossings produces an alternating diagram. Moreover, the effect of such a smoothing of a crossing corresponds to deleting the associated edge in the Seifert graph. Hence, there exists a sequence of $p_1(L)$ Seifert smoothings transforming $D'$ into a diagram whose Seifert graph is a tree and thus represents the unknot. Since the signature of the unknot is trivial, we conclude that $\sigma(L')=p_1(L)$. 
\end{proof}

\begin{figure}[htbp]
\centering
\includegraphics[width = 11.9cm]{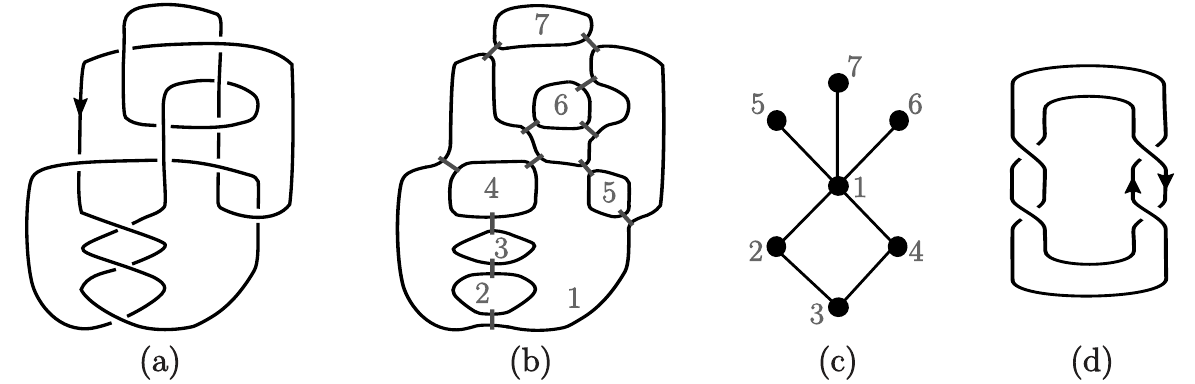}
\caption{A positive diagram of knot $K12n110$, its Seifert smoothing, and its reduced Seifert graph with $p_1=1$ are shown in (a), (b), and (c), respectively. The diagram $D'$ from Lemma~\ref{lem:existence} has the same reduced Seifert graph. In (d) we show $D'$ after removing nugatory crossings, from which we see that it represents the mirror of $L4a1\{0\}$. We compute that $Kh^1(K12n110)=\Z$ is supported in quantum grading $7$ and $Kh^1(m(L4a1\{0\}))=\Z$ is supported in quantum grading $2$.}
\label{fig:example_proof}
\end{figure}

\begin{lemma}\label{lem:last_group}
    If $D'$ is a diagram constructed as in the proof of Lemma \ref{lem:existence} representing a link $L'$, then 
    $$Kh^{1,p_1(L)+3}(L')=0.$$
\end{lemma}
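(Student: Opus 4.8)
The plan is to exploit that the diagram $D'$ produced in Lemma~\ref{lem:existence} is alternating, so that the structural results available for alternating links apply to $L'$. By Lee's theorem a non-split alternating link is thin, so $Kh(L';\Q)$ is supported on the two diagonals $j-2i=\sigma(L')\pm1$; here $L'$ is non-split because $G_0(D)^{red}$ is connected, whence $D'$ is a connected alternating diagram. The two $i=0$ generators recalled in Section~\ref{sec:other_obstructions}, sitting at $(0,-\chi(L'))$ and $(0,2-\chi(L'))$, identify these diagonals and give $\sigma(L')=1-\chi(L')$. Combined with $\sigma(L')=p_1(L)$ from Lemma~\ref{lem:existence}, this places the group we must kill, $Kh^{1,p_1(L)+3}(L')$, in bidegree $(1,\sigma(L')+3)$, which lies on the \emph{upper} diagonal $j-2i=\sigma(L')+1$. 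Thinness alone therefore does not force vanishing, and the heart of the argument is to show that the upper diagonal carries nothing in homological grading $1$.

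For the rational statement I would run the Lee spectral sequence, whose $E_1$-page is $Kh(L';\Q)$ and which converges to the Lee homology $Kh_{Lee}(L')$. On a thin complex all higher differentials vanish for degree reasons, so only the first differential $d_1$, of bidegree $(1,4)$, can be nonzero, and $d_1$ sends the lower diagonal to the upper one. At the bidegree $(1,\sigma(L')+3)$ the outgoing $d_1$ lands in $Kh^{2,\sigma(L')+7}$, which lies off both diagonals and hence is zero, while the only possible incoming $d_1$ originates from $Kh^{0,\sigma(L')-1}=Kh^{0,-\chi(L')}$.

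The decisive point is that this incoming differential also vanishes. The positivity results of Section~\ref{sec:other_obstructions} give $\dim_\Q Kh^{0}(L';\Q)=2$, concentrated in $(0,-\chi(L'))$ and $(0,2-\chi(L'))$. On the other hand, the original orientation of $L'$ and its total reversal produce two linearly independent Lee classes in homological grading $0$, so $\dim_\Q Kh_{Lee}^{0}(L')\geq2$; since $Kh_{Lee}^0(L')$ is a subquotient of $Kh^0(L';\Q)$ it has dimension exactly $2$. As no differential enters homological grading $0$, the equality $E_\infty^0=E_1^0$ forces $d_1$ to vanish on all of $Kh^0(L';\Q)$, in particular on $Kh^{0,-\chi(L')}$. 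Consequently the bidegree $(1,\sigma(L')+3)$ neither receives nor emits a $d_1$, so $E_2^{1,\sigma(L')+3}=Kh^{1,\sigma(L')+3}(L';\Q)$ survives to $E_\infty=Kh_{Lee}(L')$. Since the Lee classes of any link lie in even homological gradings (each grading is twice a linking number), we have $Kh_{Lee}^{1}(L')=0$, and therefore $Kh^{1,\sigma(L')+3}(L';\Q)=0$.

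It remains to rule out torsion. For thin links all torsion is $2$-torsion, and the knight-move description of the integral homology places every torsion group on the lower diagonal $j-2i=\sigma(L')-1$; as $(1,\sigma(L')+3)$ lies on the upper diagonal, there is no torsion there, and together with the rational vanishing this yields $Kh^{1,p_1(L)+3}(L')=0$ over $\Z$. I expect the main obstacle to be the middle step in the multi-component case: establishing that the generator at $(0,-\chi(L'))$ genuinely survives the Lee spectral sequence. For a knot this is immediate, as it is a surviving (exceptional) generator, but for a link I would justify it precisely by the rank count at homological grading $0$ above, which is where the positivity input $\dim_\Q Kh^0(L';\Q)=2$ is essential.
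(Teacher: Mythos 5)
Your proof is correct, but it follows a genuinely different route from the paper's. The paper proves Lemma~\ref{lem:last_group} by induction on $p_1(D')$ using the unoriented skein long exact sequence at a crossing of $D'$: the $0$-smoothing is again a positive alternating diagram with $p_1$ dropped by one (handled by the induction hypothesis), while the $1$-smoothing is an alternating adequate diagram whose all-zero state graph is no longer bipartite, so the relevant group vanishes by Remark~6.4 of~\cite{almost_extremeKH} --- the same tool the paper uses to prove Theorem~\ref{thm:khovanov_positivity}. You instead invoke the structure theory of non-split alternating links: thinness, the Lee spectral sequence, and the placement of the $2$-torsion. Your decisive step --- that the incoming $d_1$ from $Kh^{0,-\chi(L')}$ must vanish because positivity pins $\dim_\Q Kh^{0}(L';\Q)$ at $2$ while the two degree-$0$ Lee classes of $o_0$ and $\bar o_0$ already require that much room at $E_\infty$ --- is exactly where positivity enters, and it is sound; the rest is diagonal bookkeeping. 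Two caveats. First, your last paragraph needs not merely Shumakovitch's ``only $\Z_2$ torsion'' but the precise statement that for non-split alternating links this torsion sits on the lower diagonal $j-2i=\sigma-1$; cite that carefully, because it cannot be replaced by a cruder universal-coefficient argument: $Kh^{2,\sigma(L')+3}(L';\Z)$ genuinely carries $\Z_2^{p_1}$ torsion when $p_1>0$, so $Kh^{1,\sigma(L')+3}(L';\Z_2)\neq 0$ and a $\Z_2$-coefficient detour would fail. Second, the trade-off: your argument is more conceptual and avoids the induction, but imports the full integral structure theorem for alternating links, whereas the paper's induction stays inside the extreme-Khovanov-homology toolkit it already deploys and is insensitive to these torsion subtleties (which matters for the paper's claim that its results persist over other coefficient rings).
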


\begin{proof}
    Without loss of generality, we can assume $D'$ to be reduced. We will prove the result by induction on $p_1(L)=p_1(D')$. If $p_1(D')=0$, the diagram $D'$ represents the unknot with vanishing first Khovanov homology. If $p_1(D')>0$, we consider one of its crossings $v$ and consider its $0$-smoothing $D_0$ and its $1$-smoothing $D_1$. The long exact sequence in Khovanov homology~\cite{Viro}, cf.~Lemma~2.2 in~\cite{search_for_torsion}, yields an exact sequence of the form
    \begin{align*}
        Kh^{\frac{w_1-w-1}{2}+1,\frac{3(w_1-w)-1}{2}+p_1(D')+3}(D_1)\rightarrow Kh^{1,p_1(D')+3}(D')\rightarrow Kh^{1,p_1(D_0)+3}(D_0),
    \end{align*}
    where $w$ and $w_1$ denote the writhes of $D'$ and $D_1$ respectively.
    
    To show the vanishing of the middle group it suffices to demonstrate that the groups on the left and right vanish. For that, we observe that $D_0$ is again a positive and alternating diagram with $p_1(D_0)=p_1(D')-1$ and such that its reduced Seifert graph agrees with its Seifert graph. Thus by induction hypothesis $Kh^{1,p_1(D_0)+3}(D_0)=0$. 

    $D_1$ is in general not positive but it is always alternating and in particular it is adequate. The graph $G_0(D_1)$ is obtained from the Seifert graph of $D'$ by deleting one edge and identifying the corresponding vertices. Thus $G_0(D_1)$ is not bipartite. Then we can apply Remark~6.4 from~\cite{almost_extremeKH} to deduce that 
    $$
Kh^{\frac{w_1-w-1}{2}+1,\frac{3(w_1-w)-1}{2}+p_1(D')+3}(D_1)=0.$$
\end{proof}

We are now ready to prove Theorems~\ref{thm:khovanov_positivity} and~\ref{thm:pos_fib}.

\begin{proof}[Proof of Theorem~\ref{thm:khovanov_positivity}]
From Lemma~\ref{lem:existence} and Theorem~\ref{thm:technical} we conclude that there exists a reduced alternating and positive diagram $D'$ of a link $L'$ with signature $\sigma(L')=p_1(L)$ such that $Kh^1(L)=Kh^1(L')$. Since $L'$ is alternating, its Khovanov homology is thin and based around the signature~\cite{Lee}, i.e.~
    $$Kh^{1,j}(L')=0\,\,\textrm{ if }\,\,j\neq p_1(L)+1, p_1(L)+3.$$
From Lemma~\ref{lem:last_group} we also know that $Kh^{1,p_1(L)+3}(L')$ is trivial and thus the only potential non-trivial Khovanov homology group of $L'$ in homological degree $1$ has quantum grading $p_1(L)+1$. To compute this group, we observe that the minimal quantum grading from~\cite{almost_extremeKH} is $e'-v' = p_1(L)-1,$ where $e'$ and $v'$ denote the number of edges and vertices in the Seifert graph $G_0(D')$. Since any Seifert graph is bipartite, it follows from~\cite[Remark 6.4]{almost_extremeKH} that $Kh^{1,p_1(L)+1}(L')$ is isomorphic to $\Z^{p_1(L)}$. In conclusion, we have computed that
\begin{align*}
     Kh^{1,j}(L')=\begin{cases}
      \Z^{p_1(L)} & \text{ if } j= {p_1(L)+1},\\
     0 & \text{otherwise}.
     \end{cases}
 \end{align*}
From Theorem~\ref{thm:technical} we conclude directly that $Kh^1(L)=\Z^{p_1(L)}$. To get the statement in Theorem~\ref{thm:khovanov_positivity} about the quantum grading we apply again~\cite[Remark 6.4]{almost_extremeKH} to the positive diagram $D$ to deduce that $Kh^{1,2-\chi(L)}(L)=\Z^{p_1(L)}$ and thus all other Khovanov homology groups in homological degree $1$ are trivial.
\end{proof}

\begin{proof}[Proof of Theorem~\ref{thm:pos_fib}]
If $L$ is fibered, then Theorem~\ref{thm:burdened_fibered+positive} implies that diagram $D'$ from Theorem \ref{thm:technical} can be chosen to be a diagram of the unknot, and therefore $Kh^1(L)=Kh^1(D')$ is trivial. 
   
Conversely, if $Kh^1(L)$ is non-trivial, then Theorem~\ref{thm:khovanov_positivity} implies that $p_1(L)$ is not vanishing and therefore from Theorem~\ref{thm:burdened_fibered+positive} we get that $L$ is not fibered.  
\end{proof}

We conclude this section by presenting an infinite family of fibered, positive knots that cannot be written as closures of positive braids. 

\begin{proposition}\label{prop:infinite_family}
Let $L_n$ be the oriented link represented by the diagram $D_n$ shown in Figure \ref{fig:infinitefamily}, for $n\in \mathbb{N}$. Then $L_n$ is fibered and positive, but cannot be realized as the closure of a positive braid, for $n \geq 1$. 
\end{proposition}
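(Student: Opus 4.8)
The plan is to establish the three asserted properties in turn, dispatching positivity and fiberedness quickly and concentrating the real effort on the failure of braid positivity. For positivity, I would simply read off from the diagram $D_n$ in Figure~\ref{fig:infinitefamily} that, for the orientation indicated, every crossing is positive in the sense of Figure~\ref{fig:markers}(b); hence $D_n$ is a positive diagram and $L_n$ is a positive link. For fiberedness I would invoke the combinatorial criterion of Theorem~\ref{thm:burdened_fibered+positive}: since $D_n$ is positive, $G_0(D_n)$ is its Seifert graph, so it suffices to compute $G_0(D_n)^{red}$ and check that after identifying parallel edges it is a tree. For this family that is a finite check independent of $n$ (the pattern of Seifert circles contains no cycle once parallel chords are merged), and Theorem~\ref{thm:burdened_fibered+positive} then yields fiberedness. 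As a byproduct, counting Seifert circles and crossings of $D_n$ pins down $\chi(L_n)$ and hence the genus.

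The core of the proposition is that $L_n$ is not the closure of a positive braid, and this is the step I expect to be the main obstacle. The conceptual reason is already visible in the Seifert graph: for any positive braid on $b$ strands the Seifert circles are the strands and each generator $\sigma_i$ joins consecutive strands, so the reduced Seifert graph of a positive braid diagram is a subgraph of the path $1-2-\cdots-b$, that is, a path. By contrast $D_n$ is designed so that $G_0(D_n)^{red}$ is a tree with a branch vertex (a Seifert circle meeting three distinct groups of parallel crossings), and hence is not a path. The difficulty is that the reduced Seifert graph is a diagram-dependent object, so this observation by itself only rules out $D_n$ being a braid diagram; to conclude that \emph{no} positive braid presents $L_n$ I must upgrade it to a statement about a genuine link invariant, and do so uniformly in $n$.

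To make the obstruction rigorous I would compare the Seifert genus of $L_n$, computed in the previous step, with its braid index. For a positive braid knot on $b$ strands with $c$ crossings one has the Bennequin identity $2g = c - b + 1$, and after destabilizing so that each generator appears at least twice one has $c \ge 2(b-1)$, so a braid positive knot satisfies $b \le 2g+1$; moreover the Morton--Franks--Williams inequality is sharp on positive braids, so the braid index is detected by the HOMFLY polynomial. The concrete route is then: (i) record $g(L_n)$ from the fiber surface; (ii) bound the braid index $b(L_n)$ from below, turning the branch vertex of $G_0(D_n)^{red}$ into a lower bound on the minimal number of Seifert circles (Yamada), or equivalently into an MFW/HOMFLY estimate; and (iii) show that the branched structure forces $b(L_n)$ to exceed $2g(L_n)+1$ for every $n\ge 1$, contradicting the positive-braid inequality. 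The main work, and the likely obstacle, is precisely the uniform braid-index lower bound in (ii): one must translate the qualitative ``branching versus path'' picture into a quantitative inequality that remains incompatible with the genus as $n$ grows. Once that incompatibility is secured for all $n$, the proposition follows, giving an infinite family of fibered, positive, non-braid-positive links as claimed.
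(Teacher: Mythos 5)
Your treatment of positivity and fiberedness matches the paper: one reads positivity off the diagram and applies Theorem~\ref{thm:burdened_fibered+positive} after checking that $G_0(D_n)^{red}$ is a tree. The qualitative observation driving your third step is also sound and is the right intuition: the reduced Seifert graph of a positive braid diagram is a path, while $G_0(D_n)^{red}$ has a branch vertex, so $D_n$ itself is certainly not a braid diagram and the issue is to promote this to an invariant statement.

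However, the quantitative upgrade you propose cannot work, and this is a genuine gap rather than just a hard step. For a positive braid closure one has $b \le 2g+1$, so to contradict braid positivity along your route you would need $b(L_n) > 2g(L_n)+1$. But the braid index of $L_n$ is bounded \emph{above}, uniformly in $n$, by Yamada's theorem: $b(L_n)$ is at most the number of Seifert circles of $D_n$, and the $n$ half-twists in the box all join the same pair of Seifert circles, so $|s_0D_n|$ is a constant ($5$, as one checks from $\chi(G_n)=-n-3$ and the crossing count) while $2g(L_n)+1 = n+5$ grows linearly. Hence $b(L_n) \le 5 \le 2g(L_n)+1$ for all $n$, and the inequality you hope to violate is in fact always satisfied; no MFW or Seifert-circle lower bound can rescue this, since the needed lower bound on $b(L_n)$ would have to grow with $n$ against a uniform upper bound. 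The paper instead obstructs braid positivity with Ito's theorem that the normalized HOMFLYPT polynomial $\widehat H_L$ of a positive braid closure has non-negative coefficients: using Ito's skein relation at the crossings in the twist box, one computes by induction that the $z^0$-coefficient $P_n(\alpha)$ of $\widehat H_{L_n}$ contains negative coefficients (e.g.\ $P_1 = 1 - 2\alpha^2 - \alpha^3$), which settles the claim uniformly in $n$. If you want to keep your ``branching versus path'' picture, an invariant that actually sees it is this HOMFLYPT positivity, not the genus/braid-index comparison.
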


\begin{figure}[htbp]
\centering
\includegraphics[width = 12cm]{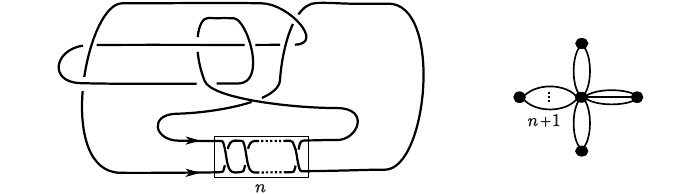}
\caption{The diagram $D_n$, where the box labeled $n$ denotes $n$ positive half-twists, and its Seifert graph $G_n= G_0(D_n)$. $L_2$ and $L_4$ correspond to the knots $K10n7$ and $K12n105$, respectively. $L_0$ is the torus knot $T(2,5)$, which is braid positive.}
\label{fig:infinitefamily}
\end{figure}

\begin{proof}
The diagram $D_n$ has reduced Seifert graph a tree and therefore $L_n$ is fibered and positive (Theorem~\ref{thm:burdened_fibered+positive}). In order to show that $L_n$ is not braid positive, we show that Ito's normalized HOMFLYPT polynomial $\widehat{H}_{L_n}$ of $L_n$ contains negative coefficients. The statement follows from the result that $\widehat H_L$ is non-negative if $L$ is the closure of a positive braid~\cite[Theorem~1.1]{Ito}.

First, observe that $L_n$ is a knot if $n$ is even; otherwise, it is a non-split link of two components (its linking number equals $(n+3)/2$). 
Applying Ito's skein relation \cite[Equation~(1)]{Ito} to one of the $n$ crossings in the box leads to the following:
 \begin{align*}
     \widehat H_{L_n}(\alpha,z)=\begin{cases}
     \widehat H_{L_{n-2}}(\alpha,z)+\widehat H_{L_{n-1}}(\alpha,z) \, ,\, \, \quad \text{ if } n \mbox{ is even},\\
     \widehat H_{L_{n-2}}(\alpha,z)+z^2\, \widehat H_{L_{n-1}}(\alpha,z) \, ,\, \text{ if } n \mbox{ is odd}.
     \end{cases}
     \label{eq:skein}
 \end{align*}

Set $\widehat H_{L_n}(\alpha, z) = \sum_{i\in\Z} P^{(i)}_{L_n}(\alpha) \cdot z^i$. We will prove that $P^{(0)}_{L_n}$, which we denote by $P_n$, is not positive. The surface $S_n$ obtained by applying the Seifert algorithm to $D_n$ realizes the $3$-genus of $L_n$ and $\chi(S_n)=\chi(G_n)=-n-3$, and therefore we get (via the HOMFLYPT function at SageMath and Ito's normalization) for the first links in the family:
\begin{align*}
    P_1 &=1-2\alpha^2-\alpha^3,  \\
    P_2&=4+2\alpha-2\alpha^2-\alpha^3.
 \end{align*}
 These computations can be accessed at~\cite{data}. By induction, the above skein relation leads to 
\begin{align*}
     P_n=\begin{cases}
      (3+k)+2\alpha-2k\alpha^2-k\alpha^3, \quad & \text{ if } n= 2k,\\
     1-2\alpha^2-\alpha^3,  \quad \quad & \text{ if } n \text{ is odd},
     \end{cases}
 \end{align*}
 
\noindent which are non-positive polynomials for any $n>0$. 
 \end{proof}

\section{Khovanov homology of L-space knots and cables}\label{sec:comp}

In this section, we study the Khovanov homology of fibered strongly quasipositive knots, of L-space knots, and of cable links. In particular, we show that Theorems~\ref{thm:khovanov_positivity} and~\ref{thm:pos_fib} do not extend to fibered strongly quasipositive knots. On the other hand, we prove that Theorem~\ref{thm:pos_fib} extends to many infinite families of L-space knots, giving support for an affirmative answer to Conjecture~\ref{conj:Lspace}. 

Thurston classified knots into three disjoint classes: torus knots, satellite knots, and hyperbolic knots~\cite{Thurston}. Torus knots are fibered, and the (strongly) quasipositive torus knots are exactly the positive torus knots $T(p,q)$, $p/q>0$~\cite{Hedden}, which are also known to be braid positive L-space knots~\cite{Moser}. So for torus knots, these four positivity notions agree.

Satellite knots are those whose complement contains an incompressible, non-boundary parallel torus. Among them, a natural class is given by cable knots; we shift our attention to them in Section \ref{subsection_cable}. In Section \ref{subsection_hyperbolic} we focus on hyperbolic knots. 

\subsection{Cable knots}\label{subsection_cable}

Given a knot $K$, its $(p,q)$-\textit{cable} $K_{p,q}$ is the satellite knot with pattern the torus knot $T(p,q)$ and companion $K$. Given a diagram $D$ of $K$, the \textit{standard diagram} $D_{p,q}$ of $K_{p,q}$ is obtained by taking $p$ parallel blackboard copies of $D$ and adding $n=pw-q$ negative $(1/p)$-twists to the $p$-parallel strands, where $w$ is the writhe of $D$ (see Figure~\ref{fig:trefoilcables} for an example). 
We emphasize that the standard diagram $D_{p,q}$ depends on the diagram $D$ of $K$ we start with, while the isotopy class of $K_{p,q}$ depends only on the isotopy class of $K$.
Since $K_{p,q}=K_{-p,-q}$ we can assume in the following that $p\geq2$. 

\begin{figure}[htbp]
\centering
\includegraphics[width = 12.9cm]{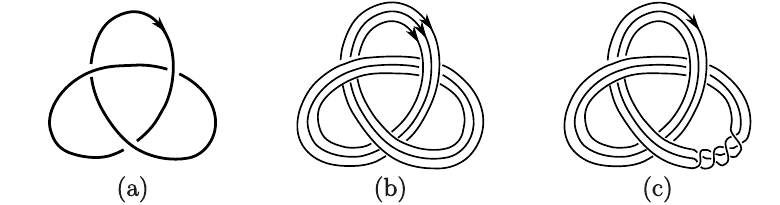}
\caption{Diagram $D$ of the positive trefoil knot is shown in (a). Standard diagrams of its cables $D_{3,9}$ and $D_{3,5}$ are shown in (b) and (c), respectively.}
\label{fig:trefoilcables}
\end{figure}
 
It is well understood which cable knots are fibered, braid positive, positive, strongly quasipositive, or L-space knots, provided one knows if $K$ has these properties. 

\begin{theorem}\label{thm:properties_of_cabling}
Let $K$ be a knot. 
    \begin{enumerate}
        \item $K_{p,q}$ is fibered if $K$ is fibered. 
        \item Let $K$ be fibered and strongly quasipositive. Then $K_{p,q}$ is strongly quasipositive if and only if  $q>0$.
        \item $K_{p,q}$ is an L-space knot if and only if $K$ is an L-space knot and $q\geq p\,(2g(K)-1)$, where $g(K)$ denotes the $3$-genus of $K$.
        \item Let $K$ be a positive (braid positive) knot. Then $K_{p,q}$ is positive (braid positive) if $q\geq p \,w(K)$, where $w(K)$ denotes the minimal writhe of a positive (braid positive) diagram of $K$. 
    \end{enumerate}
\end{theorem}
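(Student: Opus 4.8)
The plan is to establish the four statements largely by citing and assembling existing results in the literature, since each part records a well-understood fact about cabling; the work lies in collecting the correct references and, for parts (2) and (4), in pinning down the precise sign and slope conventions so that the inequalities come out as stated. First I would treat (1), which is classical: the fiberedness of $K$ passes to any $(p,q)$-cable because the cable pattern $T(p,q)$ is fibered in the solid torus and one can splice the fibration of the pattern in the solid torus together with the fibration of the companion complement. This is standard (see Burde--Zieschang or Stallings-type arguments) and I would simply cite it.

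For (3), the L-space condition, I would cite the theorem of Hedden and of Hom characterizing which cables of L-space knots are again L-space knots; the precise statement that $K_{p,q}$ is an L-space knot if and only if $K$ is an L-space knot and $q \geq p(2g(K)-1)$ is exactly the content of \cite{Hom_cabling_and_Lspace_surgeries}, and I would invoke it directly. For (2), the strong quasipositivity statement, the natural approach is to use Hedden's characterization of fibered strongly quasipositive knots via the relationship between the Seifert genus and the top Alexander grading of knot Floer homology, together with the behavior of the fractional Dehn twist coefficient under cabling; the condition $q>0$ is precisely the condition that the cabling does not introduce a negative fractional twist that would destroy quasipositivity. I would cite \cite{Hedden} for the fibered strongly quasipositive characterization and assemble the sign bookkeeping from the cabling formula for the genus.

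The step I expect to be the main obstacle is part (4), the positivity (and braid positivity) statement, because here the threshold $q \geq p\,w(K)$ must be derived from an explicit diagrammatic construction rather than quoted. The idea is to start from a positive diagram $D$ of $K$ with writhe $w=w(K)$ and form the standard cable diagram $D_{p,q}$, which consists of $p$ parallel blackboard copies of $D$ together with $n = pw - q$ added $(1/p)$-twists. Taking $p$ parallel positive copies of $D$ is itself positive, contributing $p^2 w$ positive crossings among the parallel strands, so the only crossings whose sign is in question are those coming from the $(1/p)$-twist region; these are positive exactly when $n \leq 0$, i.e.\ when $q \geq pw$. The main care needed is to verify that the twist region contributes only positive (respectively, only positive-braid) crossings under the stated inequality and to confirm that the blackboard-parallel crossings have the correct sign with respect to the chosen orientation, and in the braid-positive case to check that the resulting word is a genuine positive braid word; I would carry out this crossing-sign analysis explicitly and cite the analogous computation for positive braids when $K$ is braid positive.
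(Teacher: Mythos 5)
Your proposal is correct and matches the paper's proof in essentially every respect: parts (1)--(3) are handled by citation (the paper quotes Proposition 2.4 and Theorem 2.11 of Hedden's cabling/contact-structures paper for (2), and Hedden--Hom for (3)), and part (4) is exactly the same crossing-sign count on the standard cable diagram, where $q\geq p\,w(D)$ forces the twist region to consist of positive crossings. No gaps.
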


\begin{proof}
$(1)$ is a well-known statement. An explanation together with beautiful visualizations can be found at~\cite{BakerBlog}. 
$(2)$ follows from Proposition 2.4 and Theorem 2.11 at \cite{Hedden_cabling_contact_structures_complex}. 
$(3)$ is proved by Hedden~\cite{Hedden_knot_floer_cablingII} and Hom~\cite{Hom_cabling_and_Lspace_surgeries}. 
$(4)$ is again a well-known statement. Let $D$ be a positive (braid positive) diagram of $K$ with writhe $w(D)$. If $q\geq p \, w(D)$, then any crossing in $D_{p,q}$ is positive. 
\end{proof} 

\begin{rem}\label{rem:not_pos}
Part (4) of Theorem~\ref{thm:properties_of_cabling} does not give any statements about the positivity (braid positivity) of the cables of $K$ for $q<p \, w(K)$. In fact, it turns out that for a given positive (braid positive) knot $K$, the cables of $K$ with $0\leq q<p \, w(K)$ are often not positive (braid positive). This can be checked by computing standard obstructions for positivity and braid positivity. 

In~\cite{data} we show that all $(2,q)$-cables of $K=T(2,3)$, $T(2,5)$, $T(3,4)$, $T(3,5)$, $T(4,3)$, and $T(4,5)$ with $0 \leq q<2\,w(K)$ have non-positive normalized HOMFLYPT polynomial and thus are not braid positive by~\cite{Ito}. In general, it is shown in~\cite{HK23} that $K_{p,q}$ is not a braid positive knot if $q<p(2g(K)-1)$. 

Sometimes it is also possible to obstruct positivity. If $D$ is a reduced positive diagram with $c$ crossings of a fibered knot $K$ then it has minimal Seifert genus and $2g(K)-1=c-|s_0D|$. Since $D$ is and its reduced Seifert graph is a tree $c\geq2(|s_0D|-1)$, and thus $4g(K)\geq c$. Using this obstruction we see for example that the $(2,3)$-cable of $T(2,3)$ is an L-space knot that is not positive (since it has Seifert genus $3$ and crossing number $15$~\cite[Table~8]{Du20}). 
\end{rem}

Theorem~\ref{thm:properties_of_cabling} allows us to present examples of cable knots that are fibered and strongly quasipositive whose first Khovanov homology is not vanishing. 
The simplest such example we know is the $(2,1)$-cable of the $(2,3)$-torus knot which was communicated to us by Lukas Lewark. 

\begin{theorem}\label{thm:verifying_conjecture}
For any $1\leq g\leq 200$
$$Kh^{1,j}\big(T(2,2g+1)_{2,1}\big)= \left\{ \begin{array}{lcc}
             \mathbb{Z} &    \mbox{ if } j = 3; \\
             0  &  \mbox{ otherwise} .
             \end{array}
   \right.$$

In particular, for $1 \leq g \leq 200$ the knot $T(2,2g+1)_{2,1}$ is strongly quasipositive, fibered and its Khovanov homology in homological degree $1$ does not vanish. 
\end{theorem}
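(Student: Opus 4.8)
The strategy is to verify the claimed first Khovanov homology computation directly by computer for each $g$ in the stated range, rather than to prove it abstractly. The point of the theorem is to exhibit an \emph{infinite-looking} (though finite, $1\le g\le 200$) family of knots that are strongly quasipositive and fibered yet have non-vanishing first Khovanov homology, thereby showing that Theorems~\ref{thm:khovanov_positivity} and~\ref{thm:pos_fib} genuinely fail for the larger class of fibered strongly quasipositive knots. So the first thing I would do is assemble, for each $g$, an explicit diagram of the cable $T(2,2g+1)_{2,1}$ and feed it to a Khovanov homology calculator.

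First I would record why these knots are strongly quasipositive and fibered, since that part is structural and needs no computation. The torus knot $T(2,2g+1)$ is a positive torus knot, hence braid positive, hence fibered and strongly quasipositive. By part~(1) of Theorem~\ref{thm:properties_of_cabling} its $(2,1)$-cable is again fibered, and by part~(2), since $T(2,2g+1)$ is fibered and strongly quasipositive and the cabling parameter $q=1>0$, the cable $T(2,2g+1)_{2,1}$ is strongly quasipositive. This already establishes the ``in particular'' clause, modulo the non-vanishing of $Kh^1$.

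Next I would produce the standard cable diagram. Starting from the standard positive braid diagram $D$ of $T(2,2g+1)$, which has writhe $w=2g+1$, the standard diagram $D_{2,1}$ of the $(2,1)$-cable is obtained by taking two blackboard-parallel copies of $D$ and inserting $n=pw-q=2(2g+1)-1=4g+1$ negative half-twists, as described at the start of Section~\ref{subsection_cable}. This gives a completely explicit diagram whose crossing number grows linearly in $g$, so it is well within reach of existing software for each $g\le 200$. I would then compute $Kh^{1,\ast}$ of this diagram for every $g$ in the range and confirm that the output is $\Z$ concentrated in quantum grading $j=3$ and zero otherwise.

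The main obstacle is computational rather than conceptual: Khovanov homology computation is exponential in the crossing number in the worst case, and $D_{2,1}$ has on the order of $8g$ crossings, so naive computation becomes infeasible well before $g=200$. To push the range as far as claimed I would rely on an efficient implementation (for instance one exploiting the reduced/divide-and-conquer tangle approaches, as in the software already cited in the paper's experimental sections), and I would simplify each diagram first via Reidemeister moves to lower the effective crossing count before running the homology computation. I would cite the accompanying data repository~\cite{data} where the diagrams, the code, and the resulting homology tables for all $1\le g\le 200$ are deposited, so that the verification is reproducible. The proof is therefore a finite, machine-checked verification together with the structural observation above; no closed-form argument for general $g$ is attempted here, which is precisely why the statement is phrased for a bounded range and motivates Conjecture~\ref{conj:Lspace} as the natural open question.
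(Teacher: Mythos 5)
Your proposal matches the paper's proof: the fibered and strongly quasipositive claims follow from Theorem~\ref{thm:properties_of_cabling}(1) and (2), and the Khovanov homology statement is established by direct machine computation (the paper uses KnotJob and the KnotTheory package, with the data deposited at~\cite{data}). No further comparison is needed.
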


\begin{proof}
It follows from Theorem~\ref{thm:properties_of_cabling} (1) and (2) that $T(2,2g+1)_{2,q}$ is fibered and strongly quasipositive for any $g,q > 0$. 
We use KnotJob~\cite{KnotJob} and the KnotTheory package~\cite{knotatlas} for the computations of the Khovanov homologies. The computations can be accessed here~\cite{data}.
\end{proof}

The above computational results strongly suggest that Theorem~\ref{thm:verifying_conjecture} holds actually true for all $g\in\N$. However, we are not aware of general computations of the Khovanov homology of cables. Nevertheless, we now proceed with the proof of Theorem~\ref{thm:KHcable} showing that the Khovanov homology of certain cables of positive knots behaves like the Khovanov homology of a positive knot.

In the proof of Theorem~\ref{thm:KHcable} we use the skein long exact sequence in Khovanov homology~\cite{Viro}. We adapt Lemma~2.2 and Theorem~3.1 from~\cite{search_for_torsion} to the case when a $0$-smoothing on a negative crossing transforms a diagram into a diagram of the unknot. 

\begin{lemma}\label{lem:les}
Let $v$ be a negative crossing in an oriented link diagram $D$, and consider the diagram $D_0$ (resp.~$D_1$) obtained from $D$ by smoothing the crossing $v$ following a $0$-marker (resp.~$1$-marker). Assume $D_1$ is oriented in the only way preserving the orientation of $D$, and orient $D_0$ in such a way that the orientation of the components not involved in $v$ is preserved. Write $w$ and $w_0$ for the writhes of $D$ and $D_0$, respectively. Then there is a long exact sequence in Khovanov homology of the form
\begin{alignat*}{11}
\cdots & \rightarrow   && Kh^{i,j+1}(D_1)  &&  \rightarrow && Kh^{i,j}(D) \, && \rightarrow && \quad Kh^{\frac{w_0-w+1}{2}+i,\frac{3(w_0-w)+1}{2}+j}(D_0)\\
& \rightarrow && Kh^{i+1,j+1}(D_1) && \rightarrow \cdots && &&
\end{alignat*}
\end{lemma}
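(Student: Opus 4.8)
The plan is to obtain the sequence as the one induced by a short exact sequence of chain complexes coming from resolving the distinguished crossing $v$, exactly as in the Viro unoriented skein exact sequence, and then to translate it from the unnormalized complex $\lbracket D \rbracket$ into the normalized Khovanov gradings $Kh^{i,j}$.

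First I would recall the structure of the cube of resolutions at $v$. Splitting the states of $D$ according to the marker they assign to $v$ decomposes the cube into two faces: the face $F_0$ of states with $s(v)=0$, which is canonically the cube of $D_0$, and the face $F_1$ of states with $s(v)=1$, which is canonically the cube of $D_1$. Since the differential $d^r$ only raises the number of $1$-markers, no edge of the cube runs from $F_1$ into $F_0$; hence $F_1$ spans a subcomplex and $F_0$ the corresponding quotient complex. Tracking the homological index $|s|$ and the internal shift $\{|s|\}$ — a state in $F_1$ carries one extra $1$-marker compared with the corresponding state of $D_1$ — identifies the subcomplex with $\lbracket D_1 \rbracket[1]\{1\}$ and the quotient with $\lbracket D_0 \rbracket$. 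This yields a short exact sequence of complexes
$$0 \to \lbracket D_1 \rbracket[1]\{1\} \to \lbracket D \rbracket \to \lbracket D_0 \rbracket \to 0,$$
whose associated long exact sequence in the bigraded homology reads $\cdots \to H^{r-1,q-1}(D_1) \to H^{r,q}(D) \to H^{r,q}(D_0) \to H^{r,q-1}(D_1) \to \cdots$.

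The main work — and the step I expect to be the genuine obstacle — is the conversion to the invariant gradings via $r=i+n_-$ and $q=j-n_++2n_-$, since $D$, $D_0$, and $D_1$ each have their own counts of positive and negative crossings. Here the hypotheses on $v$ and on the orientations are exactly what makes the bookkeeping go through. Because $v$ is negative and $D_1$ inherits the orientation of $D$, every surviving crossing keeps its sign, so $n_+(D_1)=n_+$ and $n_-(D_1)=n_--1$; substituting into the two $D_1$ terms collapses the shifts and produces $Kh^{i,j+1}(D_1)$ and $Kh^{i+1,j+1}(D_1)$. For $D_0$ the resolution is disoriented and the chosen reorientation can flip the signs of other crossings, so I would not track individual signs but instead use the two invariants that are preserved: the total crossing number drops by one, i.e.\ $n_+(D_0)+n_-(D_0)=n_++n_--1$, while by definition $n_+(D_0)-n_-(D_0)=w_0$ and $n_+-n_-=w$. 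Solving this linear system for $n_\pm(D_0)$ and feeding the result into the grading conversion turns the $H^{r,q}(D_0)$ term into $Kh^{\frac{w_0-w+1}{2}+i,\,\frac{3(w_0-w)+1}{2}+j}(D_0)$, which is precisely the shift in the statement. Assembling the three converted terms reproduces the claimed sequence, and I would finally remark that the construction and exactness are identical to Lemma~2.2 and Theorem~3.1 of~\cite{search_for_torsion}, the only new input being the grading computation for a negative crossing with the disoriented $0$-resolution.
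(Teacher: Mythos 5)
Your proposal is correct and follows essentially the same route as the paper, which simply notes that the proof is analogous to Lemma~2.2 of~\cite{search_for_torsion}: the short exact sequence $0\to\lbracket D_1\rbracket[1]\{1\}\to\lbracket D\rbracket\to\lbracket D_0\rbracket\to 0$ from splitting the cube at $v$, followed by the grading conversion. Your bookkeeping for the $D_0$ term (solving for $n_\pm(D_0)$ from the writhe and total crossing count) checks out and reproduces the stated shifts $\frac{w_0-w+1}{2}$ and $\frac{3(w_0-w)+1}{2}$ exactly.
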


\begin{proof}
The proof is similar to that of \cite[Lemma 2.2]{search_for_torsion}.
\end{proof}

\begin{corollary}\label{cor:Bsmoothing}
    In the setting of Lemma~\ref{lem:les}, assume that $D_0$ is a diagram of the unknot, and set $u:=\frac{w-w_0-1}{2}$. Then the Khovanov homologies of the links presented by $D$ and $D_1$ are related as follows.
    \begin{itemize}
        \item If $(i,j)\notin \big\{(u,3u+2\pm1),(u+1,3u+2\pm1)\big\}$ then
        \begin{equation*}
            Kh^{i,j}(D)=Kh^{i,j+1}(D_1).
        \end{equation*}
        \item The other Khovanov homology groups fit into the following two exact sequences
        \begin{alignat*}{6}
            0&\rightarrow Kh^{u,3u+3\pm1}(D_1)&&\rightarrow Kh^{u,3u+2\pm1}(D)&&\rightarrow \Z\\
            &\rightarrow Kh^{u+1,3u+3\pm1}(D_1)&&\rightarrow Kh^{u+1,3u+2\pm1}(D)&&\rightarrow 0.
        \end{alignat*} 
    \end{itemize}
\end{corollary}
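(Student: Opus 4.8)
The plan is to substitute the hypothesis $D_0 = \bigcirc$ into the long exact sequence of Lemma~\ref{lem:les} and then exploit the extreme sparsity of the Khovanov homology of the unknot, $Kh^{a,b}(\bigcirc) = \Z$ for $(a,b) \in \{(0,1),(0,-1)\}$ and $0$ otherwise. Reading that sequence with the quantum grading $j$ fixed and the homological index $i$ running, every term coming from $D_0$ has the form $Kh^{\frac{w_0-w+1}{2}+i,\frac{3(w_0-w)+1}{2}+j}(\bigcirc)$, so the sequence splits into a string of isomorphisms between the homology of $D$ and that of $D_1$, interrupted only where this unknot term is nonzero.

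First I would locate the finitely many bidegrees at which the $D_0$-term survives. Substituting $w - w_0 = 2u+1$ into the two shifts of Lemma~\ref{lem:les}, the unknot term attached to $Kh^{i,j}(D)$ sits in homological degree $i - u$ and in a quantum degree depending affinely on $j$ with unit slope. Forcing the homological degree to vanish gives $i = u$, and requiring the quantum degree to be $\pm 1$ singles out the two values of $j$ in the statement, each contributing a single copy of $\Z$. These two nonzero unknot groups, together with the connecting maps emanating from them, are responsible for the special bidegrees $\{(u, 3u+2\pm1),(u+1,3u+2\pm1)\}$: the homological degree $u$ enters through the map $Kh^{u,j}(D) \to \Z$, and the homological degree $u+1$ through the map $\Z \to Kh^{u+1,j+1}(D_1)$.

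Next I would treat the complement of the special set. If, for a given $(i,j)$, both the unknot term at step $i$ and the one at step $i-1$ vanish, then the fragment $Kh^{\cdots}(\bigcirc) \to Kh^{i,j+1}(D_1) \to Kh^{i,j}(D) \to Kh^{\cdots}(\bigcirc)$ has zero groups at both ends, whence $Kh^{i,j}(D) \cong Kh^{i,j+1}(D_1)$; this is exactly the first bullet. Finally, at each of the two special quantum gradings the unknot contributes exactly one $\Z$, placed in homological degree $0$ and flanked by vanishing unknot groups in homological degrees $-1$ and $+1$, because $Kh(\bigcirc)$ is concentrated in a single homological degree. Excising the long exact sequence immediately before and after this lone $\Z$ turns the flanking zeros into the outer $0$'s and leaves precisely the six-term exact sequence of the statement, one instance for each sign.

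The step I expect to be the only real obstacle is the grading bookkeeping: one has to push the affine shifts $\frac{w_0-w+1}{2}$ and $\frac{3(w_0-w)+1}{2}$ through the definition of $u$ and confirm that the surviving $\Z$ lands at $i = u$ in exactly the two claimed quantum gradings, and that the unknot groups in the neighboring homological degrees $\pm 1$ vanish so that the six-term sequence genuinely starts and ends in $0$. Once $Kh(\bigcirc)$ has been inserted, everything else is a formal consequence of exactness.
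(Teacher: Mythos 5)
Your strategy is exactly the paper's: the published proof is a single sentence invoking Lemma~\ref{lem:les} together with the fact that $Kh^{a,b}(\bigcirc)=\Z$ for $(a,b)=(0,\pm1)$ and $0$ otherwise, and your description of how the long exact sequence degenerates into isomorphisms away from the finitely many bidegrees where the unknot term survives, plus one six-term sequence at each surviving $\Z$, is the right way to fill that in.

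However, the one step you defer --- ``the grading bookkeeping'' --- is precisely where you cannot wave your hands, because carrying it out does not reproduce the quantum gradings in the statement. With $w-w_0=2u+1$ the shifts of Lemma~\ref{lem:les} become $\frac{w_0-w+1}{2}=-u$ and $\frac{3(w_0-w)+1}{2}=-3u-1$, so the unknot term sitting to the right of $Kh^{i,j}(D)$ is $Kh^{i-u,\,j-3u-1}(\bigcirc)$. This is nonzero iff $i=u$ and $j-3u-1=\pm1$, i.e.\ $j=3u+1\pm1$ rather than $j=3u+2\pm1$; correspondingly the $D_1$-groups in the six-term sequence sit in quantum degree $j+1=3u+2\pm1$ rather than $3u+3\pm1$. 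As a sanity check, take $D$ the standard two-crossing diagram of the negative Hopf link and $v$ either crossing: then $w=-2$, $w_0=1$, $u=-2$, and the nonzero groups $Kh^{-2,-4}(D)$ and $Kh^{-2,-6}(D)$ --- which cannot be isomorphic to the vanishing unknot groups $Kh^{-2,-3}(D_1)$ and $Kh^{-2,-5}(D_1)$ --- occur exactly at $j=3u+1\pm1=\{-4,-6\}$, whereas $3u+2\pm1=\{-3,-5\}$. So your argument, executed honestly, proves the corollary with the special quantum gradings of $D$ shifted down by one (equivalently, the gradings printed in the statement are those of $D_1$, not of $D$); you should either record that correction or identify the convention that supplies the missing $+1$, rather than asserting that the computation ``singles out the two values of $j$ in the statement.'' The homological gradings $u$ and $u+1$ do come out as stated, and those are all that the later applications in the paper actually use.
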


\begin{proof}
The statement follows from Lemma~\ref{lem:les} and the fact that the Khovanov homology of the unknot is $\Z$ in gradings $(i,j)=(0,\pm1)$ and zero for all other gradings. 
\end{proof}

We will deduce Theorem~\ref{thm:KHcable} from a more general result about the family of twisted cables link, that we introduce now: 
Given $0 \leq m \leq p-1$, the {\it{twisted cable}} $K_{p,q;m}$ is the satellite link of $K$ with pattern given by the braid word\footnote{Here we represent a standard Artin generator $\sigma_i^{\pm 1}$ by $\pm i$.} 
$$w_{p,q}*[-1,-2,\ldots,-m],$$ where $w_{p,q}$ the standard $p$-stranded braid word of $T(p,q)$. As for genuine cables, we get standard diagrams $D_{p,q;m}$ of twisted cables, as shown in Figure \ref{fig:twistedcable}. 
It is clear that $K_{p,q;0} = K_{p,q} = K_{p,q+1;p-1}$. We write $K^{\circ}_{p,q;m}$ for any link obtained as the disjoint union of $K_{p,q;m}$ and a finite number (possibly zero) of unknots. 

\begin{figure}[htbp]
\centering
\includegraphics[width = 11cm]{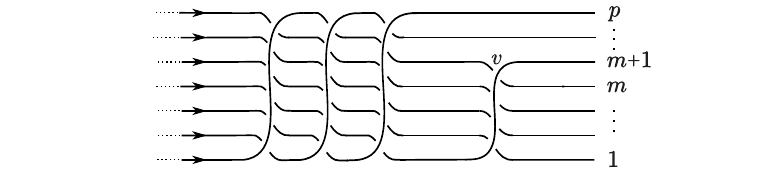}
\caption{Part of a diagram $D_{p,q;m}$ representing a twisted cable link. The diagram also contains $p$-blackboard copies of $D$ that are not depicted. In this particular example, $p=7$, $n=3$, $m=4$, with $n= pw(D)-q$.}
\label{fig:twistedcable}
\end{figure}

\begin{theorem}\label{thm:twisted_cable}
Let $K$ be a positive knot with writhe $w$. Then, for $q=pw-n \geq p \geq 2$, $n>0$, and $1\leq m \leq p-1$ we have: 
    \begin{align*}
        Kh^{i,*}(K_{p,q;m})&=0 \,\textrm{ for every }\,  i<0, \,\textrm{ and }\,  \\
        Kh^{0,j}(K_{p,q;m})&=\begin{cases}
            \Z\,&\textrm{ if }\,  j=1-p\chi(K)+q(p-1)-m\pm1, \\
            0 \,&\textrm{else,}
        \end{cases}\\
        Kh^{1,j}(K_{p,q;m})&=\begin{cases}
            \Z^{p_1(K)}\,&\textrm{ if }\,  j=2-p\chi(K)+q(p-1)-m, \\
            0 \,&\textrm{else.}
        \end{cases}
    \end{align*}
\end{theorem}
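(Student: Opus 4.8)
The plan is to prove the statement by induction on $m$, peeling off one negative crossing at a time from the twist region using the skein machinery developed in Corollary~\ref{cor:Bsmoothing}. The key structural observation is that the crossings introduced by the braid word $[-1,-2,\ldots,-m]$ are negative, and that $0$-smoothing the last such negative crossing (the one labeled $-m$) merges two of the parallel cabling strands in a way that deletes a twist and reduces $m$ to $m-1$. I would first set up the base case: when $m$ reaches a value where the twisted cable degenerates to a genuine cable $K_{p,q}$ (recall from the text that $K_{p,q;0}=K_{p,q}$), the desired Khovanov homology is exactly the content of Theorem~\ref{thm:KHcable}, so Theorem~\ref{thm:KHcable} itself must either be proved first or folded into the same induction. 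Since the excerpt states that Theorem~\ref{thm:KHcable} is to be \emph{deduced from} Theorem~\ref{thm:twisted_cable}, the honest base case is instead $m=p-1$ together with the identity $K_{p,q;p-1}=K_{p,q+1;0}=K_{p,q+1}$, reducing $q$; so the induction should run on the pair $(q,m)$ ordered lexicographically, bottoming out at a genuine cable whose first Khovanov homology is controlled directly by Theorem~\ref{thm:khovanov_positivity} applied to the positive diagram $D_{p,q}$ once $q\geq pw$ makes all crossings positive.

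Concretely, for the inductive step I would let $v$ be the crossing $-m$ in $D_{p,q;m}$ and apply Corollary~\ref{cor:Bsmoothing} to it. The $1$-smoothing $D_1$ of $v$ should recover a diagram of $K_{p,q;m-1}$ (up to adding split unknot components, which is why the statement is phrased for $K^{\circ}_{p,q;m}$ and why Lemma~\ref{lem:kunneth} is available to strip them off without changing vanishing), while the $0$-smoothing $D_0$ should be arranged to be a diagram of the unknot. The bulk of the work is a careful crossing-and-writhe bookkeeping: I must verify that smoothing $v$ genuinely yields the unknot (so that Corollary~\ref{cor:Bsmoothing} applies) and then track the grading shifts $u=\frac{w-w_0-1}{2}$ through the two exact sequences, checking that the quantum gradings $1-p\chi(K)+q(p-1)-m\pm1$ and $2-p\chi(K)+q(p-1)-m$ land exactly where the inductive hypothesis predicts nontrivial groups. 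For the homological gradings $i<0$ the vanishing propagates immediately from the exact sequence, since both neighboring terms vanish by induction; for $i=0$ and $i=1$ the rank computation ($\Z$ in the two quantum gradings for $i=0$, and $\Z^{p_1(K)}$ in the single quantum grading for $i=1$) follows from the short exact sequences in Corollary~\ref{cor:Bsmoothing} once one checks that the connecting $\Z$ from the unknot lands in the appropriate degree.

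I expect the main obstacle to be the identification of the smoothings geometrically together with the precise writhe computation. Showing that $0$-smoothing the crossing $-m$ produces an unknot diagram, and that the $1$-smoothing reproduces $K_{p,q;m-1}$ with the correct number of split unknots, requires an explicit analysis of the braid word $w_{p,q}*[-1,\ldots,-m]$ and of how a single Reidemeister/smoothing move interacts with the $p$ blackboard-parallel copies of $D$; getting the sign of $n=pw-q$ and the resulting writhes $w_0,w_1$ exactly right is delicate. A secondary subtlety is that $D_1$ is in general neither positive nor reduced, so I cannot invoke the positive-link results on $D_1$ directly — all positivity input must enter only at the base case through Theorem~\ref{thm:khovanov_positivity}, and the inductive step must be purely homological-algebraic via the exact sequences. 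Once the geometry of the two smoothings and the writhe arithmetic are pinned down, the grading arithmetic and the propagation of vanishing are routine.
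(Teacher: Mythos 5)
Your overall strategy---induct by resolving one negative crossing of the twist region at a time via the skein long exact sequence, and bottom out at the positive cable $K_{p,pw}$ where Theorem~\ref{thm:khovanov_positivity} applies---matches the paper's in outline, and your resolution of the potential circularity with Theorem~\ref{thm:KHcable} is correct. But there is a genuine gap at the heart of the inductive step: you assert that the $0$-smoothing $D_0$ of the chosen negative crossing is a diagram of the unknot, so that Corollary~\ref{cor:Bsmoothing} applies. This is true only for $p=2$ (which is exactly the paper's base case). For $p\geq 3$, $0$-smoothing a crossing between two adjacent strands of the $p$-stranded pattern merges those two strands into a single component while leaving the other $p-2$ strands intact; the result is not an unknot but, after a nontrivial isotopy (Lemma~\ref{lem:families}), either a non-positive diagram of a positive link or a non-standard diagram of a smaller twisted cable $K^{\circ}_{p',p'w-n';m'}$ with $(p',n',m')<(p,n,m)$, possibly with split unknot components --- which, incidentally, arise in $D_0$, not in $D_1$ as you suggest ($D_1$ is exactly the standard diagram of $K_{p,q;m-1}$).

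Because $D_0$ is not an unknot, Corollary~\ref{cor:Bsmoothing} is unavailable and one must use the general long exact sequence of Lemma~\ref{lem:les}, in which the $D_0$ terms do not vanish for free. The paper's substitute is the writhe estimate $w_0-\overline{w}+3<0$ (Lemma~\ref{lem:writhe_comp}, a delicate case-by-case count of the crossings of $D_0$, and the place where the hypothesis $q\geq p$ is actually used), which pushes the $D_0$ terms into negative homological degree, where they vanish either by the positivity obstructions of \cite{Patterns_in_Khovanov_homology} or by the induction hypothesis together with Lemma~\ref{lem:kunneth}. This also forces the induction to run lexicographically over the triples $(p,n,m)$ rather than over $(q,m)$: your induction never decreases $p$, so it cannot absorb the smaller twisted cables produced by the $0$-smoothing, and positivity input is needed inside the inductive step (for the positive-link case of $D_0$), not only at the base case. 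Without the identification of $D_0$ and the writhe inequality, the argument does not close.
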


\begin{proof}
Let $D$ be a positive diagram of $K$ with writhe $w$. First, we recall that $D_{p,q}$ for $q\geq pw$ is a positive diagram. By analyzing its Seifert smoothing we see that every circle in $s_0D$ yields $p$ circles in $s_0D_{p,q}$. A simple analysis on the $0$-chords shows that $G_0(D_{p,q})^{red}$ has the same cyclomatic number as $G_0(D)^{red}$ and thus $p_1(K_{p,q})=p_1(K)$ for all $q\geq p$.

Now, we consider lexicographic order on the triples $(p,n,m)$ and proceed by induction. We start with the base case $p=2$. Given $n>0$, consider the knot $K_{2,2w-n}$. The writhe of its standard diagram $D_{2,2w-n}$ is $4w-n$, and performing a $0$-smoothing at any of its negative crossings leads to a diagram of the unknot with writhe $w_0={n-1}$, while performing a $1$-smoothing at the same crossing leads to the standard diagram of $K_{2,2w-n+1}$. Thus we can apply Corollary~\ref{cor:Bsmoothing} to deduce that 
\begin{equation*}
    Kh^{i,j}(K_{2,2w-n})=Kh^{i,j+1}(K_{2,2w-n+1})
\end{equation*}
whenever $i$ is not equal to $2w-n=q$ or $2w-n+1=q+1$. Inductively this yields the theorem for $p=2$, since $K_{2,2w}$ is positive with $p_1(K_{2,2w})=p_1(K)$ and thus the statement follows from Theorem~\ref{thm:khovanov_positivity} and~\cite[Proposition~8]{Patterns_in_Khovanov_homology}. 

For the induction step, we assume that the statement is true for all $(p',n',m')<(p,n,m)$ with $p\geq 3$. Consider the diagram $D_{p,p w-n;m}$ with writhe $\overline{w}=p^2w-n(p-1)-m$. Let $v$ be the highest negative crossing in the last partial twist (see Figure~\ref{fig:twistedcable}). Then the $1$-smoothing $D_1$ at $v$ is the standard diagram of $K_{p,pw-n;m-1}$. 

\begin{claim}
The $0$-smoothing of $D_{p,pw-n;m}$ at $v$ leads to a diagram $D_0$ with writhe $w_0$ satisfying the following conditions (that we will prove in Lemmas~\ref{lem:writhe_comp} and~\ref{lem:families} below):
\begin{itemize}
    \item[(1)] $w_0 - \overline{w} + 3 <0$.
    \item[(2)] $D_0$ is either a (non-positive) diagram of a positive link or a (non-standard) diagram of $K^{\circ}_{p',p'w-n',m'}$ with $(p',n',m')<(p,n,m)$.
    \end{itemize}
\end{claim}

We can then apply Lemma~\ref{lem:les} to see that the Khovanov homology groups of these knots fit into the following exact sequence
\begin{equation*}
  Kh^{i'-1,j'-1}(D_0) \rightarrow  Kh^{i,j+1}(K_{p,p  w-n;m-1})\rightarrow Kh^{i,j}(K_{p,p  w-n;m})\rightarrow Kh^{i',j'}(D_0)
\end{equation*}
with $i'= (w_0-\overline{w}+1)/2 +i$. 
The first condition in the claim implies that $i' < 0$ if $i\leq1$. If $D_0$ represents a positive link, then we get from~\cite[Proposition~8]{Patterns_in_Khovanov_homology} that $Kh^{i',j'}(D_0)$ is vanishing. Otherwise, the second condition in the claim together with Lemma~\ref{lem:kunneth} allows us to apply the inductive hypothesis to deduce that $Kh^{i',j'}(D_0)$ is vanishing. Analogously, we conclude that $Kh^{i'-1,j'-1}(D_0)$ is trivial. 
Thus we conclude that $Kh^{i,j}(K_{p,p  w-n;m})=Kh^{i,j+1}(K_{p,p  w-n;m-1})$ and the theorem follows by induction since $K_{p,pw}$ is positive. 
\end{proof}

\begin{lemma}\label{lem:writhe_comp}
If we orient the diagram $D_0$ in the proof of Theorem~\ref{thm:twisted_cable} in such a way that the orientations of the components not involved in $v$ are preserved, then $w(D_0)-w(D_{p,pw-n;m}) + 3 <0$.
\end{lemma}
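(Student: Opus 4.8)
The plan is to pin down $w(D_0)$ by analysing the re-orientation that the smoothing at $v$ forces on the rest of the diagram. Since $v$ is negative, its $0$-smoothing is the \emph{disoriented} (non-Seifert) smoothing; this is in contrast to the $1$-smoothing, which is the oriented one and is precisely why it reproduces $K_{p,pw-n;m-1}$. Breaking $D:=D_{p,pw-n;m}$ at the two strands meeting at $v$ separates it into two arcs, and the disoriented smoothing reglues them so that, in order to orient $D_0$ while preserving the orientation of every component not meeting $v$, exactly one of the two arcs must be reversed. A crossing $c\neq v$ keeps its sign in $D_0$ precisely when both or neither of its strands lie on the reversed arc, and it changes sign precisely when exactly one of them does. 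Writing $\Sigma$ for the sum of the $D$-signs of the crossings whose two strands lie on opposite arcs and using $\operatorname{sgn}(v)=-1$, this accounting gives the identity
\[
  w(D_0)=\overline{w}+1-2\Sigma,\qquad \overline{w}:=w(D_{p,pw-n;m}).
\]
Hence the claimed inequality $w(D_0)-\overline{w}+3<0$ is \emph{equivalent} to the estimate $\Sigma\geq 3$, and that is what I would establish.

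To make $\Sigma$ computable I would first locate the two arcs. Because $v$ is the topmost crossing of the last partial twist, the disoriented smoothing produces a turnback just below $v$ and a turnback at the very top of the twist region; following the diagram from a turnback shows that each of the two arcs leaves the twist region and runs once (or several times) through the cabling part of $D$. In the cabling region the $p$ parallel copies of the companion diagram do not permute, so each arc reverses a whole collection of copies, and one checks that both arcs meet the cabling region, so the copies split as $k$ reversed and $p-k$ non-reversed with $1\le k\le p-1$. Which copies are reversed is governed by the cycle, containing the strand through $v$, of the strand permutation of the pattern braid $w_{p,q}\cdot\sigma_1^{-1}\cdots\sigma_m^{-1}$; I would make this explicit using that $w_{p,q}=(\sigma_1\cdots\sigma_{p-1})^q$ acts by the shift $i\mapsto i-q \pmod p$ while the partial twist acts by the cycle $(1\,2\,\cdots\,m+1)$.

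With the two-colouring of the $p$ strands in hand, I would evaluate $\Sigma$ by separating the bridging crossings into those of the cabling region and those of the twist region. Every bridging crossing in the cabling region is positive: as the copies carry the blackboard framing of the positive diagram $D$, each mixed pair of copies meets in exactly $2w$ positive crossings, so this region contributes $+2w\,k(p-k)$. The twist region contains only the $n(p-1)+m$ negative crossings of the negative twists, so its bridging crossings contribute $-b$, where $b$ is the number of twist crossings joining two differently coloured strands (the crossing $v$ itself being excluded). This yields
\[
  \Sigma = 2w\,k(p-k) - b,
\]
and the task becomes to show $2w\,k(p-k)-b\ge 3$ for all admissible $(p,q,m)$ with $q\ge p\ge 2$.

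The main obstacle is that this bound is sharp, so no lossy estimate can work: already for $p=2$ every twist crossing bridges the two strands, giving $\Sigma = 2w-(n-1)=2w-n+1$, which drops to exactly $3$ at the extreme case $q=p$. In particular one cannot bound $b$ crudely by the total number $n(p-1)+m$ of twist crossings, since that total can exceed the positive cabling term $2w\,k(p-k)$; rather, $b$ is typically far smaller, because most twist crossings join equally coloured strands, and it must be computed \emph{exactly} from the permutation analysis of the previous step. I therefore expect the heart of the proof to be the verification that the positive cabling contribution dominates the genuinely bridging negative twist crossings by at least $3$, with equality occurring precisely at $q=p$ so that the hypothesis $q\ge p$ is used sharply. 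The identification of $D_0$ carried out in Lemma~\ref{lem:families} runs along the same strand-tracking and can be used in tandem as a consistency check on the resulting writhe.
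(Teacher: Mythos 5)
Your setup is correct and is in fact an equivalent repackaging of the paper's computation: the identity $w(D_0)=\overline{w}+1-2\Sigma$ (remove the negative crossing $v$, then flip the sign of every crossing joining the reversed arc to the unreversed one) is sound, and your count of $2w\,k(p-k)$ positive bridging crossings in the cabling region matches the paper's contribution $w(p-2x)^2=wp^2-4wx(p-x)$ for that region, with your $k$ playing the role of the paper's parameter $x$. So the reduction to the inequality $2w\,k(p-k)-b\geq 3$ is a legitimate reformulation of the claim.

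The genuine gap is that you never establish that inequality: the entire content of the lemma lives in the estimate of $b$, and your proposal explicitly defers it (``that is what I would establish,'' ``I therefore expect the heart of the proof to be the verification\dots''). As you yourself observe, the crude bound $b\leq n(p-1)+m-1$ fails, so one must actually track the colouring of the $p$ strands through the twist region. Concretely, what is missing is: (i) writing $n=pk'+t$ with $0\leq t<p$ and showing each full negative twist contributes exactly $2k(p-k)$ bridging crossings, so that the full-twist blocks contribute $2k'k(p-k)$ to $b$ and the cabling term dominates them via $k'\leq w-1$ (strictly when $t\neq 0$, which comes from $q\geq p$); (ii) bounding the bridging crossings of the $t$ leftover twists and of the $m$-partial twist — this is where the paper introduces the parameters $t_d$ (with the combinatorial bound $t_d\leq x-1$ when $p>2x$), $x_m$ and $u$ (with $u(2x_m-m+1)\leq m-1$); and (iii) a case analysis on $t\in\{0,\dots,p-1\}$ using $p\geq 3$ to close the remaining slack. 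Without these steps the argument is an outline, not a proof; as it stands, nothing rules out $\Sigma\leq 2$. (A minor additional remark: your sharpness discussion at $p=2$ is tangential, since in the induction of Theorem~\ref{thm:twisted_cable} this lemma is only invoked for $p\geq 3$; the base case $p=2$ is handled directly via Corollary~\ref{cor:Bsmoothing}.)
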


\begin{proof}
Figure~\ref{fig:twistedcable0} shows the $0$-smoothing $D_0$ of a standard diagram $D_{p,pw-n;m}$ of a twisted cable. We add the dotted line $\mathcal{L}$ to simplify diagrammatic descriptions. 

\begin{figure}[htbp]
\centering
\includegraphics[width = 10.5cm]{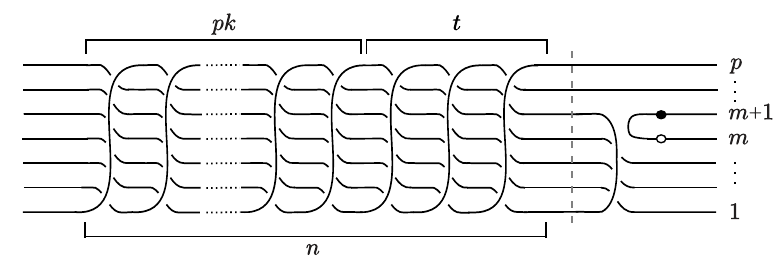}
\caption{Part of diagram $D_0$ together with the dotted grey line $\mathcal{L}$ illustrating the proofs of Lemmas~\ref{lem:writhe_comp} and~\ref{lem:families}.}
\label{fig:twistedcable0}
\end{figure}

Write $n=pk+t$, with $0 \leq t < p$. 
We set: 
\begin{itemize}
    \item $x$: number of strands crossing $\mathcal{L}$ oriented from right to left.
    \item $x_m$: number of strands crossing $\mathcal{L}$ oriented from right to left in position $i$, with $i\in \{2, \dots m\}$. The labeling of position starts from below, as in Figure~\ref{fig:twistedcable0}.
    \item $t_d$: number of strands crossing $\mathcal{L}$ at position $i$ oriented from right to left, with $p-t+1 \leq i \leq p$. 
    \item $u$: it takes the value $1$ (resp. $-1$) if the situation is as in Figure~\ref{fig:casesu}(a) (resp.~Figure~\ref{fig:casesu}(b)).
\end{itemize}

\begin{figure}[htbp]
\centering
\includegraphics[width = 10.5cm]{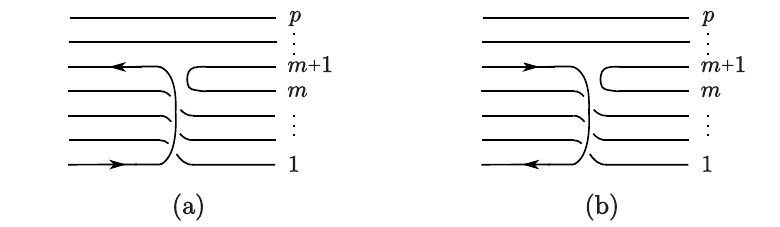}
\caption{The two possible situations determining the value of the paramenter $u$: if $D_0$ is oriented as in (a) then $u=1$; in case (b) then $u=-1$.}
\label{fig:casesu}
\end{figure}

We compute the writhe $w_0$ of $D_0$ in terms of the above parameters. To do so, we split its crossings into four classes (I -- IV) and compute their writhes ($w_{\operatorname{I}}$ -- $w_{\operatorname{IV}}$) separately, so that $w_0= w_{{\operatorname{I}}} + w_{{\operatorname{II}}} + w_{{\operatorname{III}}} + w_{{\operatorname{IV}}}$.  

\noindent - \underline{I}: contains those crossings coming from the positive diagram $D$ (i.e.~not created by any of the twists). Each crossing in $D$ contributes to the writhe of $w_{\operatorname{I}}$ with $((p-x)-x)^2$. Hence, the total contribution is $$w_{\operatorname{I}}=w (p-2x)^2.$$ 

The next two classes contain those crossings created by the $n=pk+t$ twists. 

\noindent - \underline{II}: contains those crossings involved in the $pk$ twists which are farther from $\mathcal{L}$. These twists can be subdivided into $k$ blocks, each of them containing $p$ consecutive twists (i.e.~each block contains exactly one full twist). Fix a strand and compute the sign of the crossings created when it wraps around the other strands: if it crosses $\mathcal{L}$ from right to left (resp.~from left to right), then it creates $(p-x)$ positive and $(x-1)$ negative crossings (resp.~$x$ positive and $(p-x-1)$ negative crossings). Hence, we get  $$w_{{\operatorname{II}}} = k \Big(x \big((p-x)-(x-1)\big) + (p-x) \big(x-(p-x-1)\big)\Big) = (n-t) \big(1-p+4x-4x^2/p\big).$$ 

\noindent - \underline{III}: contains those crossings involved in the $t$ twists which are closest to the dotted line. A similar reasoning as before leads to $$w_{{\operatorname{III}}} = \Big(t_d \big((p-x)-(x-1)\big) + (t-t_d) \big(x-(p-x-1)\big)\Big) = 2t_d (p-2x) + t (2x-p+1).$$

\noindent - \underline{IV}: contains those crossings created by the partial twist related to $m$. There are two possible situations depicted in Figure \ref{fig:casesu}, from which we read-off that $$w_{\operatorname{IV}} = u (2x_m-m+1).$$

Therefore, we get:
\begin{align*} \beta\, :&= \,  w_0-\overline{w}+3 \,    \\
&= \,4x (k-w) (p-x) + 2xt +2t_d(p-2x) + u(2x_m-m+1) + m + 3.
\end{align*}

Next, we bound some of the terms in $\beta$. Since $0 \leq t \leq p-1$ and $q\geq p$, we get:
\begin{equation} \label{boundk}
pk+t \, =\,  n \,=\, pw-q \, \leq \, p(w-1), \mbox{ which implies }  k \leq w-1, 
\end{equation}
with strict inequality if $t\neq 0$.

From $0 \leq x_m \leq m-1$ we deduce that \begin{equation*}\label{boundwd} u (2x_m -m + 1) \leq m-1.\end{equation*}

We consider first the case when $(p-2x)> 0$ (we defer the other case to the end of the proof). A combinatorial argument shows that $t_d \leq x-1$ and therefore
\begin{equation*}\label{boundtd} 2t_d(p-2x)\leq 2(x-1)(p-2x).\end{equation*} 

Substitution of the previous bounds and $m \leq p-1$ we bound $\beta$ as:
$$\beta \leq 2 (xt -xp +2x + m - p + 1) \leq 2x (t-p+2).$$

Now, we consider different situations depending on the value of $t$:
\begin{itemize}
\item If $0 < t < p-1$, then \eqref{boundk} is a strict inequality, and therefore $$\beta < 2x(t-p+2) <0.$$
\item If $t=0$, then $\beta \leq 2x (2-p),$ which is strictly negative, since $p\geq 3$.
\item If $t=p-1$, then \eqref{boundk} becomes $k \leq w-2$ and analogous reasoning leads to
$$\beta \leq  2x \big(2 (x-p) +1\big) < 0 ,$$ where the last inequality comes from $p>x>0$. 
\end{itemize}

The case when $(p-2x)\leq 0$ leads to $2t_d(p-2x)\leq 0$, and the proof holds in a similar way. 
\end{proof}

Recall, that the diagram $D_0$ in the proof of Theorem~\ref{thm:twisted_cable} is obtained from a standard diagram $D_{p,pw-n;m}$ of a twisted cable by smoothing the negative crossing~$v$. 

\begin{lemma}\label{lem:families}
There exists exactly one component $C$ in $D_0$ that is involved in the smoothing at $v$. If we preserve the orientation of all other components then there exists a choice of orientation on $C$ such that $D_0$ is either a (non-positive) diagram of a positive link or a (non-standard) diagram of $K^{\circ}_{p',p'w-n';m'}$ with $(p',n',m')<(p,n,m)$.
\end{lemma}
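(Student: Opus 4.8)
The plan is to analyze the diagram $D_0$ directly, tracking how the disoriented $0$-smoothing at the top crossing $v$ of the last partial twist reconnects the strands of the standard diagram $D_{p,pw-n;m}$ pictured in Figures~\ref{fig:twistedcable} and~\ref{fig:twistedcable0}. First I would locate the component $C$. Since $v=\sigma_m^{-1}$ is a negative braid crossing between two coherently (upward) oriented strands, its $0$-smoothing is the \emph{disoriented} (cap--cup) smoothing, which reverses the orientation along one of the two arcs meeting at $v$ and joins them into a single ``turnback''. Following this turnback through the remaining twist crossings $\sigma_1^{-1}\cdots\sigma_{m-1}^{-1}$ and around the $p$ parallel copies of $D$, I would check that the two strand-ends at $v$ lie on one and the same component of $D_0$; this is the unique component whose orientation is incompatible with the orientation induced from $K_{p,pw-n;m}$, so it is exactly the component $C$, and its orientation is ours to choose. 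The two admissible orientations of $C$ correspond precisely to the two local pictures of Figure~\ref{fig:casesu}, i.e.\ to the two values $u=\pm1$.

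The heart of the argument is to simplify the tangle obtained after the smoothing, namely $w_{p,q}$ followed by $\sigma_1^{-1}\cdots\sigma_{m-1}^{-1}$ and then the cap--cup on strands $m,m+1$. The cap absorbs the crossings immediately below it and the cup can be slid up into the $p$-cabled companion region; after a sequence of Reidemeister~II/III moves and isotopies of the turnback I would read off the isotopy type of $D_0$. I expect the outcome to depend on $m$ (and, through the turnback, on the auxiliary quantities $t$, $t_d$, $x_m$ appearing in Lemma~\ref{lem:writhe_comp}): for some values the reconnection shortens the partial twist or reduces the number of cabling strands, exhibiting $D_0$ as a (non-standard) diagram of a twisted cable $K^{\circ}_{p',p'w-n';m'}$ with strictly smaller triple $(p',n',m')$, the split unknots in $K^{\circ}$ arising from closed loops created by the turnback; for the remaining values the twist structure cancels and, after orienting $C$ by the appropriate value of $u$, every crossing of $D_0$ becomes positive, so $D_0$ is a (non-positive diagram of a) positive link. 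In each case I would verify the lexicographic inequality $(p',n',m')<(p,n,m)$ by bookkeeping the crossings and strands removed by the smoothing.

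The main obstacle will be the case analysis in the second step: correctly identifying the reconnected diagram and, in the twisted-cable cases, pinning down the new parameters $(p',n',m')$ from the braid combinatorics while confirming that they strictly decrease. The orientation choice, that is, the value of $u$, must be made consistently with the reconnection so that the ``positive link'' alternative genuinely yields only positive crossings; getting this bookkeeping right across all values of $m$, and matching it to the writhe computation already carried out in Lemma~\ref{lem:writhe_comp}, is where the real work lies.
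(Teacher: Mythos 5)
Your overall strategy coincides with the paper's: locate the single component $C$ produced by the disoriented $0$-smoothing at $v$, then isotope the resulting turnback through the diagram and identify $D_0$ by a case analysis. Two remarks. First, your identification of $C$ is only asserted (``I would check that the two strand-ends at $v$ lie on one and the same component''); the paper obtains this immediately from a parity observation --- the cap and the cup are the only two arcs with vertical tangencies in the relevant picture, and a closed planar curve has an even number of such tangencies, so the two arcs cannot lie on different components. Your ``follow the strands'' check would instead require controlling the permutation induced by the braid, which you never pin down.

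Second, and more seriously, the entire content of the lemma \emph{is} the case analysis, and your proposal defers it (``I expect the outcome to depend on $m$\dots'', ``for some values \dots for the remaining values \dots''): you have restated the conclusion as the anticipated outcome of an analysis you have not carried out. The organizing parameter you propose ($m$ together with $t$, $t_d$, $x_m$ from Lemma~\ref{lem:writhe_comp}) is also not the one that makes the analysis tractable. In the paper the turnback is pushed around the closure until its two endpoints meet the auxiliary line $\mathcal{L}$ at consecutive positions $a$ and $a+1$, and the six cases of Figure~\ref{fig:cases6} are distinguished by where $a$ lands relative to the full twists, the partial $t$-twists, and the $m$-twist region --- not by the value of $m$ itself. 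Two of these cases ($\mathcal{A}$ and $\mathcal{F}$ with remaining negative twists) require the further nontrivial isotopies of Figure~\ref{fig:specialcase} before a twisted cable can be recognized, and the degenerate cases $m=1$ and $m=p-1$ must be treated separately because they can create split unknot components (which is precisely why the lemma's target is $K^{\circ}_{p',p'w-n';m'}$ rather than $K_{p',p'w-n';m'}$). None of this bookkeeping --- in particular the verification that $(p',n',m')<(p,n,m)$ holds in every branch --- appears in your proposal, and it cannot be waved through: it is the proof.
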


\begin{proof}
There are only two arcs with vertical tangencies in Figure~\ref{fig:twistedcable0}, and they are the ones obtained when smoothing $v$ b so they belong to the same component $C$. We will show that we can orient that component such that $D_0$ satisfies the statement of the theorem.

First, we study the case when $m\neq 1, p-1$. Starting from the diagram $D_0$ depicted in Figure~\ref{fig:twistedcable0}, push the marked arc to the right through the whole diagram till the white and black dots intersects $\mathcal{L}$ at positions $a$ and $b$. It is clear that $b=a+1$. We consider positions always modulo $p$. Depending on the value of $a$ there are $6$ possible situations depicted in Figure~\ref{fig:cases6}.

\begin{figure}[htbp]
\centering
\includegraphics[width = 11cm]{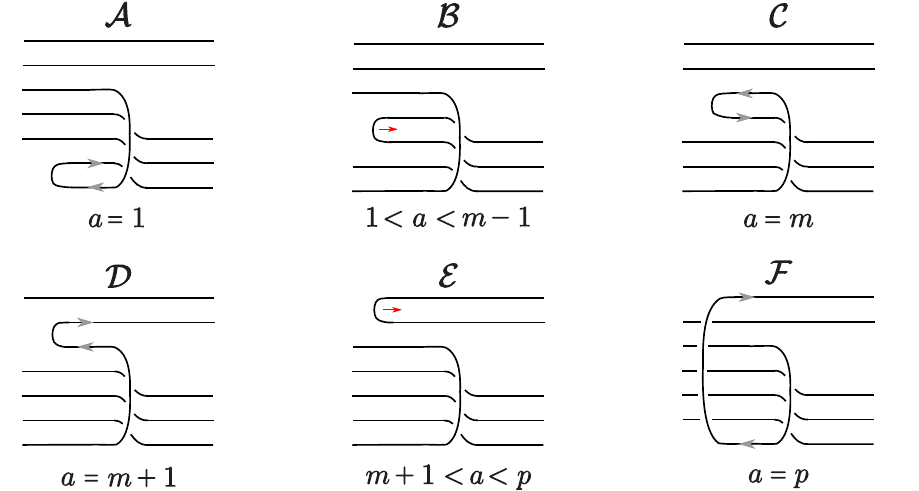}
\caption{The six possible cases illustrating proof of Lemma~\ref{lem:families} depending on the values of $a$ (assuming original orientation of $D_{p,q;m}$ is from left to right).}
\label{fig:cases6}
\end{figure}

In the cases $\mathcal{B}$ and $\mathcal E$ we continue pushing the arc to the right through the diagram until we reach one of the other cases. We readily see that (after choosing the orientation on $C$ shown in Figure~\ref{fig:cases6}) the diagrams in cases $\mathcal{C}$ and $\mathcal{D}$ represent standard diagrams of twisted cables of $K$ with $p'<p$. 

Next, we consider case $\mathcal{A}$. Let $r$ denote the number of negative twists in the associated diagram $D_{\mathcal{A}}$. If $r=0$, then $D_{\mathcal{A}}$ oriented as in Figure \ref{fig:cases6} is positive. Otherwise, we perform the isotopy shown in Figure~\ref{fig:specialcase}(a)--(f) to see that it represents a twisted cable. Observe that it may be needed to push the red box several times through the whole diagram before reaching Figure~\ref{fig:specialcase}(d). 

\begin{figure}[htbp]
\centering
\includegraphics[width = 11.3cm]{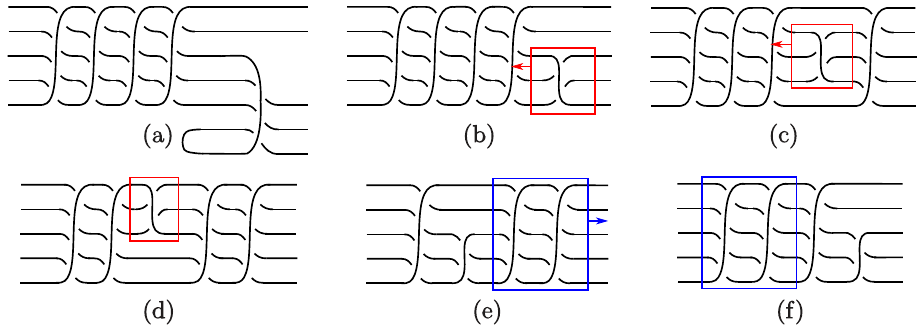}
\caption{Isotopies illustrating proof of Lemma \ref{lem:families} in the case~$\mathcal{A}$. From (b) to (d) the red box is pushed to the left. From (e) the blue box is pushed to the right through the diagram to obtain (f). }
\label{fig:specialcase}
\end{figure}

In case $\mathcal{F}$, if $r=0$ then we rotate the diagram by $\pi$ (after choosing the orientation on $C$ shown in Figure~\ref{fig:cases6}) to get the standard diagram of a twisted cable of $K$, with $p'<p$. The case $r>0$ works similarly as case $\mathcal{A}$ by pushing the red box to the right. 

The cases when $m=1$ or $m=p-1$ can be handled with the same method. Observe that in these cases an additional unknotted split component may arise. 
\end{proof}

\begin{proof}[Proof of Theorem \ref{thm:KHcable}]
Let $D$ be a positive diagram of $K$ with writhe $w>0$. Theorem \ref{thm:properties_of_cabling} implies that $K_{p,q}$ is fibered and if $q\geq p \, w$ it is also positive, and thus the statement follows from Theorem~\ref{thm:khovanov_positivity} by observing that $p_1(K_{p,q})=p_1(K)$. 

For $q<pw$ the statement follows from Theorem~\ref{thm:twisted_cable} by using Schubert's cabling formula for the genus~\cite{Schubert} to compute $-\chi(K_{p,q})=-p\chi(K)+q(p-1)$. 
\end{proof}

\begin{rem}
In the proof of Theorem~\ref{thm:KHcable} we require the $(p,q)$-cables to satisfy $q\geq p \geq 2$ so we can achieve the inequality for $\beta$ in Lemma~\ref{lem:writhe_comp}. However, computational experiments show that for $p,w \leq 30$ and $q\geq 3$ it holds that $\beta <0$ and therefore Theorem~\ref{thm:KHcable} holds true in that cases as well (see~\cite{data} for the computations). This suggests that Theorem~\ref{thm:KHcable} is true for all $q\geq 3$. 
\end{rem}

\begin{proof}[Proof of Corollary~\ref{cor:fibered_cable}]
If $K$ is fibered then $K_{p,q}$ is also fibered by Theorem~\ref{thm:properties_of_cabling}(1) and thus Theorems~\ref{thm:KHcable} and~\ref{thm:burdened_fibered+positive} imply the result. Conversely, if $Kh^1(K_{p,q})=0$ then Theorem~\ref{thm:KHcable} implies that $p_1(K)=0$ and thus Theorem~\ref{thm:burdened_fibered+positive} implies that $K$ is fibered.
\end{proof}

In further search for a counterexample to Conjecture~\ref{conj:Lspace}, it is natural to consider cables of more general L-space knots. Possible candidates to be counterexamples are iterated cables of $L$-space knots. 

\begin{theorem}\label{thm:iteraded_cable}
Let $K$ be the $(2,3)$-cable of $T(2,3)$. Then, for every $q\geq 2$, the Khovanov homology groups (with $\mathbb{Z}_2$-coefficients) of $K_{2,q}$ fulfill
\begin{align*}
    Kh^{i,j}(K_{2,q})= \begin{cases}
        \Z_2 &\textrm{ if } (i,j)=(0,1-\chi(K_{2,q})\pm1), \\
        0 &\textrm{ for all other groups with }\, i\leq1.
        \end{cases}
\end{align*}
 In particular, any $2$-stranded cable of $T(2,3)_{2,3}$ that is an L-space knot has Khovanov homology with $\Z_2$-coefficients of the above form.
\end{theorem}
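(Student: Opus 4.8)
The goal is to understand the Khovanov homology (over $\Z_2$) of the iterated cable $K_{2,q}$, where $K=T(2,3)_{2,3}$ is itself the $(2,3)$-cable of the trefoil. My plan is to mimic the inductive strategy used in the proof of Theorem~\ref{thm:twisted_cable}, feeding the skein long exact sequence of Lemma~\ref{lem:les} and Corollary~\ref{cor:Bsmoothing} through a reduction on $q$. The base of the induction should be a value of $q$ large enough that $K_{2,q}$ is a positive link: since $K$ is strongly quasipositive and fibered but \emph{not} positive (as noted in Remark~\ref{rem:not_pos}, $K$ has Seifert genus $3$ and crossing number $15$), I cannot apply Theorem~\ref{thm:KHcable} directly. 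Instead I would fix a convenient diagram $D$ of $K$ with writhe $w$, form the standard diagram $D_{2,q}$, and observe that for $q\geq 2w$ this diagram is positive, so the homology in homological degrees $i\le 1$ is governed by Theorem~\ref{thm:khovanov_positivity}. Because $K$ is fibered, Theorem~\ref{thm:pos_fib} forces $p_1(K)=0$, giving $Kh^{1,*}=0$ and $Kh^{0,\,1-\chi\pm1}=\Z_2$ in the base case.

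Next I would run the descending induction on $q$ from the positive regime down to $q=2$. For $q<2w$, the standard diagram $D_{2,q}$ has negative crossings in the cabling twist region; smoothing the top negative crossing $v$ with a $1$-marker yields $D_{2,q+1}$, while the $0$-smoothing yields a diagram $D_0$. The key input I need is the analogue of the Claim inside the proof of Theorem~\ref{thm:twisted_cable}: first, a writhe inequality of the form $w(D_0)-w(D_{2,q})+3<0$, which guarantees that the grading-shifted group $Kh^{i',j'}(D_0)$ sits in homological degree $i'<0$ whenever $i\le 1$; and second, an identification of the link type of $D_0$ as something whose low-degree Khovanov homology already vanishes in positive homological degree. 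For $p=2$ the combinatorics of $D_0$ is much simpler than the general twisted-cable analysis of Lemma~\ref{lem:families}, so I expect that $D_0$ is either a positive link (whose $Kh^{i}$ vanishes for $i<0$ by~\cite{Patterns_in_Khovanov_homology}) or the $(2,q+1)$-cable pattern reappearing at smaller twist count, possibly with a split unknot handled via Lemma~\ref{lem:kunneth}. Granting these, Lemma~\ref{lem:les} collapses to the isomorphism $Kh^{i,j}(K_{2,q})=Kh^{i,j+1}(K_{2,q+1})$ for all $(i,j)$ with $i\le 1$ outside the two exceptional grading pairs, and Corollary~\ref{cor:Bsmoothing} controls the exceptional pairs, completing the inductive step once the genus shift $-\chi(K_{2,q})=-2\chi(K)+q$ (Schubert's formula~\cite{Schubert}) is tracked.

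The main obstacle I anticipate is the writhe bound: because $K$ is not positive, the base diagram $D$ contributes in a more delicate way to the writhe of $D_0$ than in Lemma~\ref{lem:writhe_comp}, where positivity of $D$ made the term $w_{\operatorname{I}}=w(p-2x)^2$ transparent. I would need to choose $D$ carefully (for instance the standard genus-minimizing diagram coming from the iterated cabling construction) and verify that the inequality $\beta<0$ still holds for $p=2$; the remark following the proof of Theorem~\ref{thm:KHcable} already reports that $\beta<0$ has been checked computationally for $p,w\le 30$ and $q\ge 3$, so a clean combinatorial proof in the $p=2$ case should be within reach, and the restriction to $\Z_2$-coefficients is only used to match the Khovanov computations invoked for L-space verification rather than in the exact-sequence argument itself. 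Finally, the last sentence of the statement follows formally: any $2$-stranded cable of $K$ that happens to be an L-space knot must, by Theorem~\ref{thm:properties_of_cabling}(3), have cabling parameter $q\geq 2(2g(K)-1)\geq 2$, so it is covered by the displayed formula.
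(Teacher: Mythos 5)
There is a genuine gap, and it is located exactly where the real difficulty of this theorem lies: the base case of your induction does not exist. You propose to anchor the descending induction at $q\geq 2w$, where you claim the standard diagram $D_{2,q}$ is positive so that Theorem~\ref{thm:khovanov_positivity} applies. But $K=T(2,3)_{2,3}$ is \emph{not} a positive knot (Remark~\ref{rem:not_pos}), so every diagram $D$ of $K$ contains a negative crossing, and each such crossing contributes four negative crossings to the two-stranded blackboard cable $D_{2,q}$ no matter how large $q$ is. Taking $q$ large only removes the negative crossings in the twist region, not those inherited from $D$. Indeed the paper states explicitly that $K_{2,q}$ is not braid positive for small $q$ and conjecturally not positive for any $q$, which is precisely why neither Theorem~\ref{thm:khovanov_positivity} nor Theorem~\ref{thm:KHcable} is available. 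Relatedly, your appeal to Theorem~\ref{thm:pos_fib} to get $p_1(K)=0$ is vacuous: $p_1$ is only defined via a positive diagram, and that theorem only applies to positive links.

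Your inductive step, on the other hand, is essentially correct and is exactly the content of the paper's Theorem~\ref{thm:general_two_cables}: smoothing a negative twist crossing of $D_{2,q}$ gives the unknot ($0$-smoothing) and $D_{2,q+1}$ ($1$-smoothing), and Corollary~\ref{cor:Bsmoothing} yields $Kh^{i,j}(K_{2,q})=Kh^{i,j+1}(K_{2,q+1})$ for $i\leq 1$; for $p=2$ no analogue of Lemmas~\ref{lem:writhe_comp} and~\ref{lem:families} is needed because $D_0$ is literally an unknot diagram. But this reduction only shows that all the groups $Kh^{i,j}(K_{2,q})$ with $i\le 1$ are determined by those of a single cable, say $K_{2,2}$; it cannot tell you what they are. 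The paper closes this gap by a direct machine computation of one such cable, which only terminated over $\Z_2$ --- this, not the L-space bookkeeping, is why the theorem is stated only with $\Z_2$-coefficients, contrary to your closing remark. Without either that computation or a genuinely new structural argument for the base case, the proof is incomplete.
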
 

To prove Theorem~\ref{thm:iteraded_cable} we need the following result for $2$-cables.  

\begin{theorem}\label{thm:general_two_cables}
    Let $K$ be a link. Then for every $q \geq 2$ the first Khovanov homologies (with arbitrary coefficients) of their $2$-cables are related by
    \begin{equation*}
        Kh^{i,j}(K_{2,q})=Kh^{i,j+2-q}(K_{2,2}) \,\,\textrm{ for all }\,\,i\leq 1.
    \end{equation*}
\end{theorem}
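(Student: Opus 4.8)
The strategy is to prove Theorem~\ref{thm:general_two_cables} by induction on $q$, using the skein long exact sequence of Corollary~\ref{cor:Bsmoothing} exactly as in the proof of Theorem~\ref{thm:twisted_cable}, but now for an arbitrary link $K$ rather than a positive knot. The base case is $q=2$, which is trivial. For the inductive step I would start from a standard diagram $D_{2,q}$ of $K_{2,q}$ (built from $p=2$ blackboard copies of a fixed diagram $D$ of $K$ together with the appropriate number of $(1/2)$-twists) and locate one of the negative crossings $v$ in the twist region. The $1$-smoothing of $D_{2,q}$ at $v$ recovers the standard diagram $D_{2,q-1}$ of $K_{2,q-1}$, while the $0$-smoothing at $v$ produces a diagram $D_0$ in which the two cable strands are joined; I would argue that $D_0$ represents $K_{2,q-1}$ with one fewer twist after an isotopy, or more usefully that $D_0$ is a diagram of the unknot connect-summed into the relevant place, so that Corollary~\ref{cor:Bsmoothing} applies.

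\textbf{Key steps.} First I would set up the diagrammatics precisely: fix the standard diagram $D_{2,q}$, record its writhe in terms of $w(D)$ and $q$, and verify that the $0$-smoothing $D_0$ at a chosen negative crossing in the twist box is a diagram of the unknot (this is the feature special to the $2$-stranded case, where resolving one twist crossing merges the two parallel strands into a single unknotted circle once the remaining twists are undone by Reidemeister~I moves). Second, with $D_0$ the unknot, I would invoke Corollary~\ref{cor:Bsmoothing}: outside the finitely many exceptional bidegrees clustered around $(u,3u+2\pm1)$ and $(u+1,3u+2\pm1)$ one gets the isomorphism $Kh^{i,j}(D_{2,q})=Kh^{i,j+1}(D_{2,q-1})$, and in the exceptional bidegrees one has the two four-term exact sequences. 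Third, I would track the quantum grading shift: each decrement of $q$ shifts the quantum grading by a fixed amount (here $+1$ before accounting for the writhe normalization), so iterating from $q$ down to $2$ accumulates the total shift $q-2$, giving $Kh^{i,j}(K_{2,q})=Kh^{i,j+2-q}(K_{2,2})$. Fourth—and this is the crucial range restriction $i\leq 1$—I would check that the exceptional homological degrees $u$ and $u+1$ produced at each stage lie strictly above $1$, so that in the range $i\leq 1$ only the clean isomorphism survives and no correction terms from the exact sequences contaminate the answer. This is precisely where the writhe computation enters: one needs $u=\tfrac{w-w_0-1}{2}>1$ at every step, which should follow from the same kind of writhe estimate as in Lemma~\ref{lem:writhe_comp}, specialized to $p=2$.

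\textbf{The main obstacle.} I expect the hard part to be controlling the homological degree of the exceptional groups uniformly across the induction, i.e.\ verifying that $u\geq 2$ (equivalently $w-w_0\geq 5$) at each step so that the exact sequences of Corollary~\ref{cor:Bsmoothing} live entirely in homological degrees $\geq 2$ and hence are invisible in the range $i\leq 1$. Since $K$ is an arbitrary link here (not positive), I cannot lean on positivity to control the writhe, so the estimate must come purely from the cabling combinatorics; getting the inequality to hold all the way down to $q=2$ is the delicate point, and it may require orienting $D_0$ carefully and comparing writhes as in Lemma~\ref{lem:writhe_comp}. A secondary subtlety is bookkeeping the orientation of the merged component in $D_0$ and the consequent writhe normalization, so that the quantum shifts telescope correctly to the stated $j+2-q$; this is routine but must be done consistently to land the exact constant in the final isomorphism.
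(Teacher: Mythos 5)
Your overall strategy is the one the paper uses: resolve a crossing in the twist region of the standard $2$-cable diagram, observe that the disoriented resolution is a diagram of the unknot, apply Corollary~\ref{cor:Bsmoothing}, and check that the exceptional homological degrees lie above $1$ so the isomorphisms telescope for $i\leq 1$. However, two steps need correcting. First, the direction of your skein move is reversed: the twist-region crossings of $D_{2,q}$ are negative, and for a negative crossing it is the $1$-smoothing that is the oriented one, so resolving such a crossing removes a negative half-twist and produces the standard diagram of $K_{2,q+1}$, \emph{not} of $K_{2,q-1}$. As written, your inductive step from $q$ down to $q-1$ therefore fails; you must instead resolve a crossing of $D_{2,q-1}$ to obtain $D_{2,q}$ as its $1$-smoothing (equivalently, run the chain of isomorphisms upward from the base case $q=2$), which still telescopes to the stated shift $j+2-q$. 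Relatedly, your first alternative for $D_0$ (``$K_{2,q-1}$ with one fewer twist'') cannot be right, since the disoriented smoothing merges the two cable strands into a single component; only your second alternative, the unknot, is correct.

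Second, you assume the twist region contains a negative crossing, which fails whenever $q\geq 2w(D)$ --- and since $K$ is an arbitrary link, $w(D)$ may be arbitrarily negative, so this case is unavoidable. The paper's proof handles it by introducing a cancelling pair of crossings with a Reidemeister~II move and resolving the resulting negative crossing. Finally, the writhe estimate you single out as the main obstacle is in fact uniform and needs no positivity: with the orientations of Lemma~\ref{lem:les}, the $4w(D)$ crossings coming from the parallel copies of $D$ contribute $0$ to $w_0$ once one strand of $D_0$ is reversed, and one computes $u=\frac{w-w_0-1}{2}=q'$ at the step involving $D_{2,q'}$. Hence the exceptional degrees are $q'$ and $q'+1$, both at least $2$ for every step of the induction, and the clean isomorphism holds in the range $i\leq 1$ as required.
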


\begin{proof}
Let $D$ be a knot diagram of $K$ with writhe $w$. The diagram $D_{2,q}$ has writhe $4w(D)+n$, with $n=2w-q$ its number of positive half-twists. If $n<0$, let $v$ be a crossing involved in a negative half-twist (if $n\geq0$, just introduce a positive and a negative crossing close to the last positive twist). Applying a $0$-smoothing to $v$ produces a diagram of the unknot with writhe $w_0=-n-1$, while a $1$-smoothing of $v$ gives rise to $D_{2,q+1}$. Thus Corollary~\ref{cor:Bsmoothing} (which holds for arbitrary coefficient groups) applies and we get
\begin{align*}
    Kh^{i,j}(K_{2,q})=Kh^{i,j+1}(K_{2,q+1})
\end{align*}
for all $i\neq q,q+1$. 
 \end{proof}

\begin{proof}[Proof of Theorem~\ref{thm:iteraded_cable}]
Since $K$ is not positive (see Remark~\ref{rem:not_pos}) Theorem~\ref{thm:KHcable} does not apply here. In~\cite{data} we also show that $K_{2,q}$ is not braid positive for small values of $q$ (and conjecturally it is not positive for all $q>0$), so we cannot apply Theorem~\ref{thm:khovanov_positivity} here. 

Nevertheless, by Theorem~\ref{thm:general_two_cables} it is enough to compute the first Khovanov homology of one $2$-stranded cable of $K$. However, the computations over the integers or over $\Q$ were not terminating on our machines. But we could compute the Khovanov homology with $\Z_2$-coefficients using~\cite{knotatlas} from which we read off that its Khovanov homology has the claimed form (the data can be accessed at~\cite{data}). 

Since $T(2,3)_{2,3}$ has Seifert genus $3$, Theorem~\ref{thm:properties_of_cabling} (3) implies that if $K_{2,q}$ is an L-space knot then $q>10$ and thus the second claim holds.
\end{proof}

\subsection{Hyperbolic knots}\label{subsection_hyperbolic}
We found hyperbolic knots that are fibered and strongly quasipositive whose first Khovanov homology is non-vanishing. The idea to construct these examples was to start with a quasipositive Seifert surface of a fibered strongly quasipositive knot and plumb a Hopf band to it (recall that Hopf plumbing preserves fiberedness and strongly quasipositivity), so that the new resulting knot is hyperbolic. 

\begin{ex}\label{ex:hyp} Let $K$ be the hyperbolic knot given by the closure of the braid shown in Figure~\ref{fig:hyp_braid}.
From the knot Floer chain complex of $K$ (computed via Szabo's program~\cite{KnotFloerhomologycalculator}) we deduce that $K$ is fibered and its $3$-genus agrees with $\tau(K)$. Therefore $K$ is strongly quasipositive by~\cite{Hedden}.
However, $Kh^{1,*}(K) \neq 0$, as shown in Table~\ref{fig:Kh_cbbaBcbbacbbaaaBBBB}.
\end{ex}

\begin{figure}[htbp] 
 	\centering
 \includegraphics[width=\textwidth]{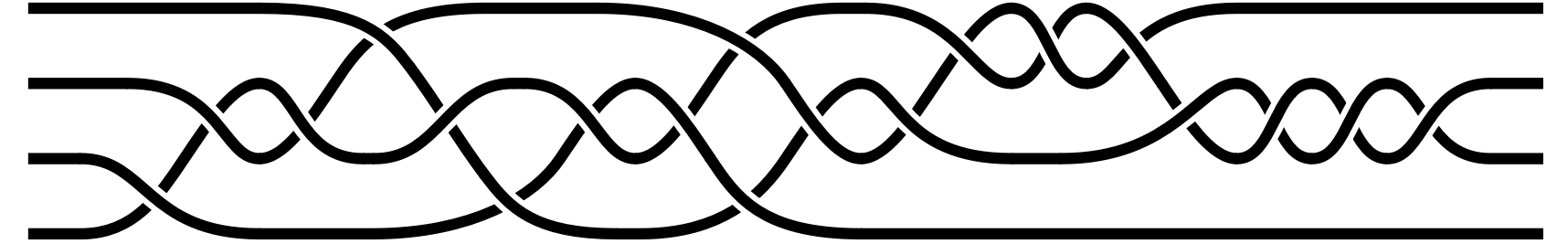}
 	\caption{The braid from Example~\ref{ex:hyp}, picture created via KLO~\cite{KLO}. 
}
  \label{fig:hyp_braid} 
 \end{figure}

  \begin{figure}[htbp] 
 	\centering
 	\includegraphics[width=\textwidth]{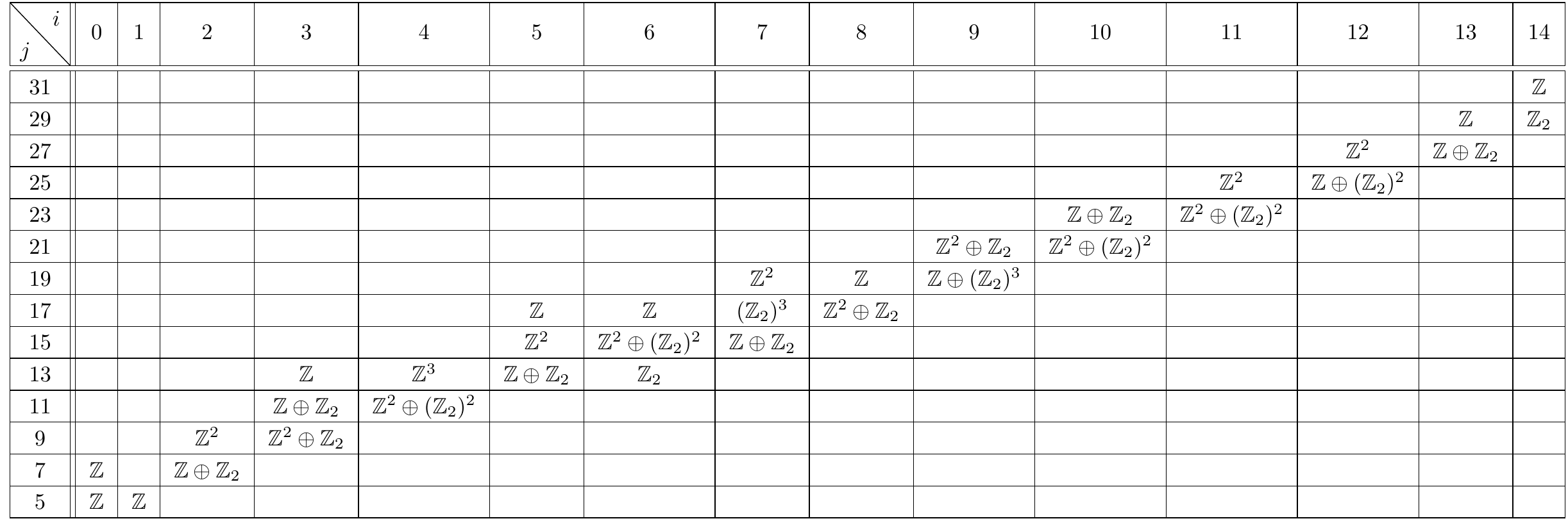}
 	\caption{Khovanov homology of the hyperbolic knot given as the closure of the braid in Figure~\ref{fig:hyp_braid}.}
  \label{fig:Kh_cbbaBcbbacbbaaaBBBB} 
 \end{figure}

 For L-space knots, the situation is more complicated. Currently, all hyperbolic L-space knots that are not known to be braid positive are contained in an infinite family $\{K_n\}_{{n>0}}$ from~\cite{BakerKegel}, where the knot $K_n$ is given as the closure of the $4$-braid
 \begin{align*}
     \beta_n= [(2, 1, 3, 2)^{2n+1}, -1, 2, 1, 1, 2].
 \end{align*}
 
 In~\cite{BakerKegel} it is shown that all $K_n$ are hyperbolic $L$-space knots. Since the normalized HOMFLYPT polynomial of $K_1$ is not positive, it follows that $K_1$ is not braid positive. When $n>1$ the braid-positivity status of $K_n$ is currently unclear.

 \begin{theorem}\label{thm:Baker_Kegel_family}
     For every $n\geq 1$, the Khovanov homology of $K_n$ satisfies
     \begin{align*}
    Kh^{i,j}(K_n)= \begin{cases}
        \Z &\textrm{ if } (i,j)=(0,4+8n\pm1), \\
        0 &\textrm{ for all other groups with }\, i\leq1.
        \end{cases}
\end{align*}
 \end{theorem}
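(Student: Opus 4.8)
The plan is to resolve the unique negative crossing of $K_n$ and compare the result with a positive braid closure whose Khovanov homology we already control. In the word $\beta_n=[(2,1,3,2)^{2n+1},-1,2,1,1,2]$ only the letter $-1$ produces a negative crossing $v$ in the closed-braid diagram $D_n$ of $K_n$; the remaining $8n+8$ crossings are positive, so $w(D_n)=8n+7$. Smoothing $v$ with a $1$-marker is the orientation-preserving resolution and simply deletes $\sigma_1^{-1}$, producing the closure $L_n'$ of the positive braid word $\gamma_n=[(2,1,3,2)^{2n+1},2,1,1,2]$. This $L_n'$ is positive, and since all three generators $\sigma_1,\sigma_2,\sigma_3$ occur its reduced Seifert graph is the path on the four Seifert circles, hence a tree, so $L_n'$ is also fibered by Theorem~\ref{thm:burdened_fibered+positive}. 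Theorem~\ref{thm:pos_fib} therefore gives $Kh^{1,*}(L_n')=0$, while the obstructions recalled in Section~\ref{sec:other_obstructions} give $Kh^{i,*}(L_n')=0$ for $i<0$ and $Kh^{0,j}(L_n')=\Z$ precisely for $j\in\{-\chi(L_n'),\,2-\chi(L_n')\}$. Since Seifert's algorithm on a positive closed braid yields a fiber surface, $\chi(L_n')=4-(8n+8)=-8n-4$, so these two quantum gradings are $8n+4$ and $8n+6$.

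Next I would feed the other resolution into the skein long exact sequence. Let $D_0$ be the $0$-smoothing of $D_n$ at $v$, oriented so as to preserve the orientation of the strands not meeting $v$. The two facts I would need are that $D_0$ is a diagram of the unknot (possibly together with split unknot components, which are harmless by Lemma~\ref{lem:kunneth}) and that its writhe satisfies $w(D_0)\le w(D_n)-5$, equivalently $u:=(w(D_n)-w(D_0)-1)/2\ge 2$. Granting these, Corollary~\ref{cor:Bsmoothing} shows that $Kh^{i,j}(K_n)$ and $Kh^{i,j+1}(L_n')$ agree except possibly in homological degrees $u$ and $u+1$, both of which are at least $2$. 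Hence $Kh^{i,j}(K_n)=Kh^{i,j+1}(L_n')$ for every $i\le 1$.

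Assembling the two steps completes the computation. For $i<0$ and for $i=1$ the right-hand side vanishes, giving $Kh^{i,*}(K_n)=0$ there, and $Kh^{0,j}(K_n)=Kh^{0,j+1}(L_n')=\Z$ exactly when $j+1\in\{8n+4,8n+6\}$, that is for $j\in\{8n+3,8n+5\}=\{4+8n-1,\,4+8n+1\}$, which is the asserted answer.

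The main obstacle is the geometric input on $D_0$. Passing to the disoriented smoothing reorients the strand created at $v$, which flips the signs of all crossings it makes with the remainder of the diagram; consequently $D_0$ is not positive and cannot be read off directly from the braid word. The hard part will be to exhibit an explicit isotopy simplifying $D_0$ to the unknot---tracking the cap--cup inserted at $v$ through the $(2,1,3,2)^{2n+1}$ block, in the spirit of Lemma~\ref{lem:families}---together with a uniform writhe count in the style of Lemma~\ref{lem:writhe_comp} establishing $w(D_0)\le w(D_n)-5$. The regular exponent $2n+1$ strongly suggests that both the isotopy and the estimate can be organized uniformly in $n$, but verifying them for all $n$ is where the genuine work lies.
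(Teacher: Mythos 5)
Your proposal follows essentially the same route as the paper: resolve the unique negative crossing, identify the $1$-resolution as the braid positive closure of $[(2,1,3,2)^{2n+1},2,1,1,2]$ with $\chi=-8n-4$ and vanishing first Khovanov homology, and transfer its Khovanov homology in degrees $i\le 1$ via Corollary~\ref{cor:Bsmoothing}. The geometric input you defer as ``the genuine work'' is precisely what the paper asserts without further argument, namely that the $0$-resolution is a diagram of the unknot with writhe $w_0=0$ (giving $u=4n+3$, which is even stronger than the bound $u\ge 2$ you need).
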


   The Khovanov homology table of $K_1$ is shown in Figure~\ref{fig:K1}.

\begin{proof}
Let $D_n$ be the diagram obtained as the closure of $\beta_n$, whose writhe is $w_n= 8n+7$. The $0$-smoothing of the unique negative crossing $v$ in $D_n$ produces a diagram of the unknot with writhe $w_0=0$, while the $1$-smoothing of $v$ produces a braid positive diagram $(D_n)_1$, and thus its first Khovanov homology vanishes. Applying Corollary~\ref{cor:Bsmoothing}, where $u=\frac{w_n-w_0-1}{2}=4n+3 \geq 7$, we get
    \begin{align*}
        Kh^{i,j}(K_n)=Kh^{i,j+1}\big((D_n)_1\big) \,\,\textrm{ for all } \,\,i\leq6.
    \end{align*}
    The fact that $\chi((D_n)_1) = -4-8n$ and the Khovanov homology obstructions for braid positive links imply the result.
\end{proof}

 \begin{figure}[htbp] 
 	\centering
 	\includegraphics[width=\textwidth]{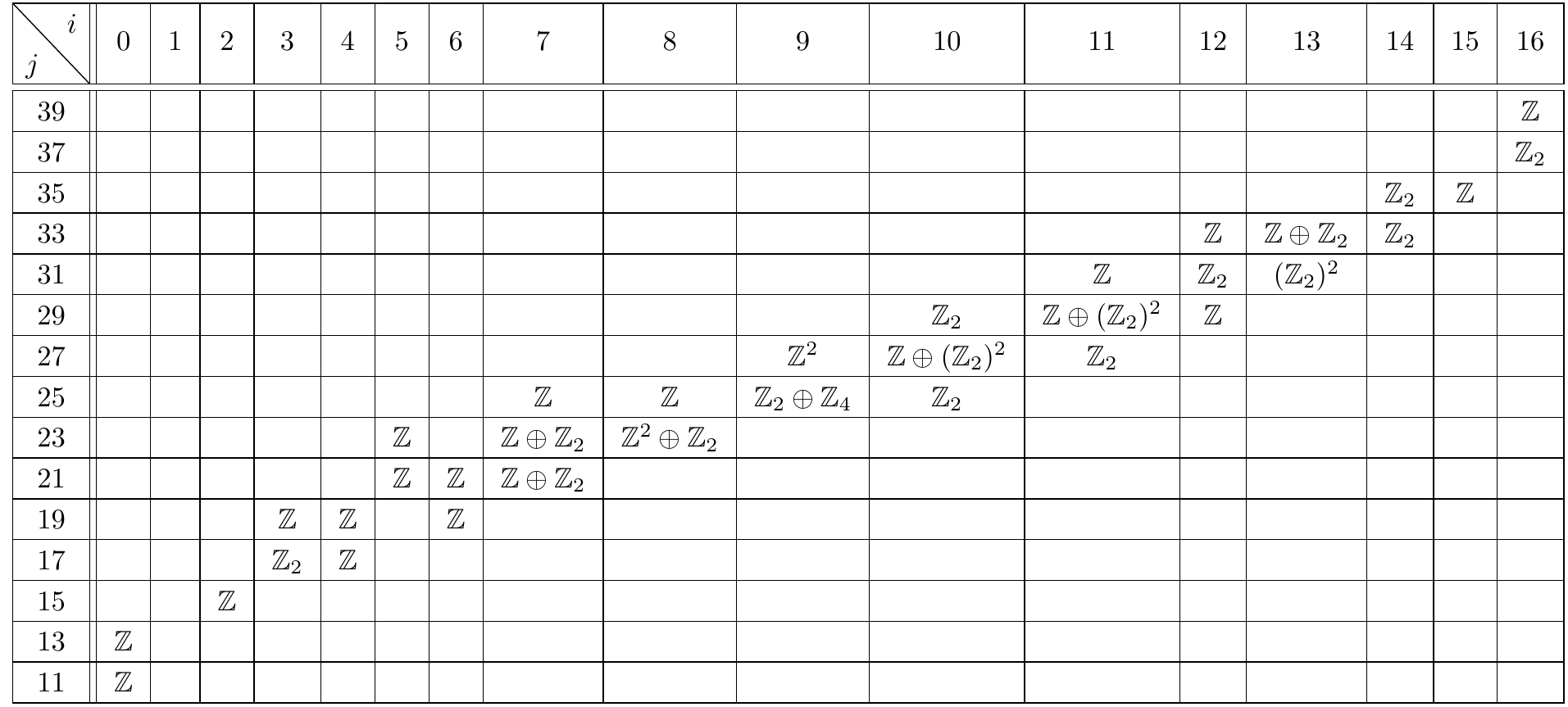}
 	\caption{\small{Khovanov homology of the hyperbolic L-space knot~$K_1$.}}
  \label{fig:K1} 
 \end{figure}

\let\MRhref\undefined
\bibliographystyle{hamsalpha}
\bibliography{ref.bib}

\newcommand{\etalchar}[1]{$^{#1}$}
\providecommand{\bysame}{\leavevmode\hbox to3em{\hrulefill}\thinspace}
\providecommand{\MR}{\relax\ifhmode\unskip\space\fi MR }
\providecommand{\MRhref}[2]{%
  \href{http://www.ams.org/mathscinet-getitem?mr=#1}{#2}
}
\providecommand{\href}[2]{#2}
\begin{thebibliography}{ABG{\etalchar{+}}21}

\bibitem[ABG{\etalchar{+}}21]{Anderson_et_al}
C.~Anderson, K.~L. Baker, X.~Gao, M.~Kegel, K.~Le, K.~Miller, S.~Onaran,
  G.~Sangston, S.~Tripp, A.~Wood, and A.~Wright, \emph{L-space knots with
  tunnel number $>1$ by experiment}, Experimental Mathematics \textbf{0}
  (2021), 1--15, published online.

\bibitem[Bak]{BakerBlog}
K.~L. Baker, \emph{Cabling a knot’s surface}, in sketches of topology, 2009,
  \url{https://sketchesoftopology.wordpress.com/2009/11/18/cabling-a-knots-surface/}.

\bibitem[BBD{\etalchar{+}}]{CensusKnotInvariants}
K.~L. Baker, J.~P. Bohl, A.~Dawid, M.~Kegel, D.~McCoy, L.~Mousseau,
  D.~Suchodoll, and N.~Weiss, \emph{Knot invariants of the census knots}, in
  preparation, 2023.

\bibitem[BK23]{BakerKegel}
K.~L. Baker and M.~Kegel, \emph{Census {L}-space knots are braid positive,
  except for one that is not}, 2023,
  \href{http://arxiv.org/abs/2203.12013}{arXiv:2203.12013}, to appear in Algeb.
  Geom. Topol.

\bibitem[BN02]{Bar2002}
D.~Bar-Natan, \emph{On {K}hovanov's categorification of the {J}ones
  polynomial}, Algebr. Geom. Topol. \textbf{2} (2002), 337--370. \MR{1917056}

\bibitem[BNMea]{knotatlas}
D.~Bar-Natan, S.~Morrison, and et~al., \emph{{K}not {A}tlas: {T}he
  {M}athematica {P}ackage {K}not{T}heory`},
  \url{http://katlas.org/wiki/The_Mathematica_Package_KnotTheory\%60}.

\bibitem[Buc22]{Buch_arxiv.2204.03846}
L.~Buchanan, \emph{A new condition on the {J}ones polynomial of a fibered
  positive link}, 2022,
  \href{http://arxiv.org/abs/2204.03846}{arXiv:2204.03846}.

\bibitem[Buc23]{buchanan2023pair}
L.~Buchanan, \emph{A pair of jones polynomial positivity obstructions}, 2023,
  \href{http://arxiv.org/abs/2303.13481}{arXiv:2303.13481}.

\bibitem[Dun20a]{Du20}
N.~M. Dunfield, \emph{Floer homology, group orderability, and taut foliations
  of hyperbolic 3-manifolds}, Geom. Topol. \textbf{24} (2020), no.~4,
  2075--2125. \MR{4173927}

\bibitem[Dun20b]{Dunfield}
\bysame, \emph{A census of exceptional {D}ehn fillings}, Characters in
  low-dimensional topology, Contemp. Math., vol. 760, Amer. Math. Soc.,
  [Providence], RI, [2020] \copyright 2020, pp.~143--155. \MR{4193924}

\bibitem[Fut13]{Futer}
D.~Futer, \emph{Fiber detection for state surfaces}, Algebr. Geom. Topol.
  \textbf{13} (2013), 2799--2807. \MR{3116303}

\bibitem[Hed08]{Hedden_cabling_contact_structures_complex}
M.~Hedden, \emph{Some remarks on cabling, contact structures, and complex
  curves}, Proceedings of {G}\"{o}kova {G}eometry-{T}opology {C}onference 2007,
  G\"{o}kova Geometry/Topology Conference (GGT), G\"{o}kova, 2008, pp.~49--59.
  \MR{2509749}

\bibitem[Hed09]{Hedden_knot_floer_cablingII}
\bysame, \emph{On knot {F}loer homology and cabling. {II}}, Int. Math. Res.
  Not. IMRN \textbf{12} (2009), 2248--2274. \MR{2511910}

\bibitem[Hed10]{Hedden}
\bysame, \emph{Notions of positivity and the {O}zsv\'{a}th-{S}zab\'{o}
  concordance invariant}, J. Knot Theory Ramifications \textbf{19} (2010),
  617--629. \MR{2646650}

\bibitem[HK]{HK23}
M.~Hedden and S.~Krishna, \emph{Taut foliations and braid positivity}, in
  preparation, 2023.

\bibitem[Hom11]{Hom_cabling_and_Lspace_surgeries}
J.~Hom, \emph{A note on cabling and {$L$}-space surgeries}, Algebr. Geom.
  Topol. \textbf{11} (2011), 219--223. \MR{2764041}

\bibitem[Ito22]{Ito}
T.~Ito, \emph{A note on {HOMFLY} polynomial of positive braid links}, Internat.
  J. Math. \textbf{33} (2022), 2250031. \MR{4402791}

\bibitem[Kan22]{Kang}
S.~Kang, \emph{Link homology theories and ribbon concordances}, Quantum Topol.
  \textbf{13} (2022), 183--205. \MR{4404800}

\bibitem[Kho00]{Khovanov_homology}
M.~Khovanov, \emph{A categorification of the {J}ones polynomial}, Duke Math. J.
  \textbf{101} (2000), 359--426. \MR{1740682}

\bibitem[Kho03]{Patterns_in_Khovanov_homology}
\bysame, \emph{Patterns in knot cohomology {I}}, Experiment. Math. \textbf{12}
  (2003), 365--374. \MR{2034399}

\bibitem[KMMS]{data}
M.~Kegel, N.~Manikandan, L.~Mousseau, and M.~Silvero, \emph{Code and data to
  accompany this paper}, Available at,
  \url{https://www.mathematik.hu-berlin.de/~kegemarc/KHposfib/KHposfib.html}.

\bibitem[KR08]{KR_homology}
M.~Khovanov and L.~Rozansky, \emph{Matrix factorizations and link homology},
  Fund. Math. \textbf{199} (2008), 1--91. \MR{2391017}

\bibitem[Lee05]{Lee}
E.~S. Lee, \emph{An endomorphism of the {K}hovanov invariant}, Adv. Math.
  \textbf{197} (2005), 554--586. \MR{2173845}

\bibitem[LM23]{KnotInfo}
C.~Livingston and A.~Moore, \emph{Knot{I}nfo: Table of knot invariants}, 2023,
  \url{http://www.indiana.edu/~knotinfo}.

\bibitem[LZ19]{LevineZemke}
A.~S. Levine and I.~Zemke, \emph{Khovanov homology and ribbon concordances},
  Bull. Lond. Math. Soc. \textbf{51} (2019), 1099--1103. \MR{4041014}

\bibitem[Mos71]{Moser}
L.~Moser, \emph{Elementary surgery along a torus knot}, Pacific J. Math.
  \textbf{38} (1971), 737--745. \MR{383406}

\bibitem[MPS{\etalchar{+}}18]{search_for_torsion}
S.~Mukherjee, J.~H. Przytycki, M.~Silvero, X.~Wang, and S.~Y. Yang,
  \emph{Search for torsion in {K}hovanov homology}, Exp. Math. \textbf{27}
  (2018), 488--497. \MR{3894728}

\bibitem[Ni07]{Ni}
Y.~Ni, \emph{Knot {F}loer homology detects fibred knots}, Invent. Math.
  \textbf{170} (2007), 577--608. \MR{2357503}

\bibitem[ORS13]{ORS13}
P.~S. Ozsv\'{a}th, J.~Rasmussen, and Z.~Szab\'{o}, \emph{Odd {K}hovanov
  homology}, Algebr. Geom. Topol. \textbf{13} (2013), 1465--1488. \MR{3071132}

\bibitem[OS05]{OzsvathSzabo}
P.~Ozsv\'{a}th and Z.~Szab\'{o}, \emph{On knot {F}loer homology and lens space
  surgeries}, Topology \textbf{44} (2005), 1281--1300. \MR{2168576}

\bibitem[PS14]{semi_adequate_torsion}
J.~H. Przytycki and R.~Sazdanovi\'{c}, \emph{Torsion in {K}hovanov homology of
  semi-adequate links}, Fund. Math. \textbf{225} (2014), 277--304. \MR{3205574}

\bibitem[PS20]{almost_extremeKH}
J.~H. Przytycki and M.~Silvero, \emph{Geometric realization of the
  almost-extreme {K}hovanov homology of semiadequate links}, Geom. Dedicata
  \textbf{204} (2020), 387--401. \MR{4056710}

\bibitem[Sch]{KnotJob}
D.~Schütz, \emph{Knotjob}, Available at
  \url{https://www.maths.dur.ac.uk/users/dirk.schuetz/knotjob.html}.

\bibitem[Sch53]{Schubert}
H.~Schubert, \emph{Knoten und {V}ollringe}, Acta Math. \textbf{90} (1953),
  131--286. \MR{72482}

\bibitem[SS22]{Extremal_KH_girth}
R.~Sazdanovi\'{c} and D.~Scofield, \emph{Extremal {K}hovanov homology and the
  girth of a knot}, J. Knot Theory Ramifications \textbf{31} (2022), Paper No.
  2250083, 26. \MR{4520782}

\bibitem[Sto05]{Stoi_MR2159224}
A.~Stoimenow, \emph{On polynomials and surfaces of variously positive links},
  J. Eur. Math. Soc. (JEMS) \textbf{7} (2005), 477--509. \MR{2159224}

\bibitem[Sto10]{Stosic}
M.~Sto\v{s}i\'{c}, \emph{On conjectures about positive braid knots and almost
  alternating torus knots}, J. Knot Theory Ramifications \textbf{19} (2010),
  1471--1486. \MR{2746199}

\bibitem[Swe]{KLO}
F.~Swenton, \emph{{KLO} ({K}not-{L}ike {O}bjects)},
  \url{http://KLO-Software.net}.

\bibitem[Sza]{KnotFloerhomologycalculator}
Z.~Szab\'{o}, \emph{Knot floer homology calculator}, Available at
  \url{https://web.math.princeton.edu/~szabo/HFKcalc.html}.

\bibitem[Thu86]{Thurston}
W.~P. Thurston, \emph{Hyperbolic structures on {$3$}-manifolds. {I}.
  {D}eformation of acylindrical manifolds}, Ann. of Math. (2) \textbf{124}
  (1986), 203--246. \MR{855294}

\bibitem[Tra04]{Traczyk}
P.~Traczyk, \emph{A combinatorial formula for the signature of alternating
  diagrams}, Fund. Math. \textbf{184} (2004), 311--316. \MR{2128055}

\bibitem[Vir04]{Viro}
O.~Viro, \emph{Khovanov homology, its definitions and ramifications}, Fund.
  Math. \textbf{184} (2004), 317--342. \MR{2128056}

\end{thebibliography}

\end{document}